\def\N{\mathop{\mathbb N\kern 0pt}\nolimits}
\def\Q{\mathop{\mathbb Q\kern 0pt}\nolimits}
\def\R{\mathop{\mathbb R\kern 0pt}\nolimits}
\def\SS{\mathop{\mathbb S\kern 0pt}\nolimits}
\theoremstyle{plain}
\newtheorem{theorem}{Theorem}[section]
\newtheorem{proposition}[theorem]{Proposition}
\newtheorem{lemma}[theorem]{Lemma}
\theoremstyle{definition}
\numberwithin{equation}{section}
\title{The necessity theory for commutators of multilinear singular integral operators: the weighted case}
\author{Dinghuai Wang \footnote{Dinghuai Wang(\texttt{Wangdh1990$@$126.com}) are supported by
     National Natural Science Foundation of China(No.11971237,12071223), the Natural Science Foundation of the Jiangsu Higher Education Institutions of China(No.19KJA320001) and Doctoral Scientific Research Foundation.}\\
    [12pt] {\small School of Mathematics and Statistics, Anhui Normal University, Wuhu, 241002, China}\\}
\begin{document}

\date{}
\maketitle
\thispagestyle{empty}

\begin{abstract}
In this paper, the necessity theory for commutators of multilinear singular integral operators on weighted Lebesgue spaces is investigated. The results relax the restriction of the weights class to the general multiple weights, which can be regarded as an essential improvement of \cite{ChafCruz2018,GLW2020}. Our approach elaborates on a commonly expanding the kernel locally by Fourier series, recovering many known results but yielding also numerous new ones. In particular, we answer the question about the necessity theory of the iterated commutators of the multilinear singular integral operators.

\vskip 0.2 true cm

\noindent
\textbf{Keywords.} Commutators, spaces of bounded mean oscillation, Muckenhoupt weights, multilinear Calder\'on-Zygmund operators.

\vskip 0.2 true cm
\noindent
\textbf{2020 Mathematical Subject Classification.}  Primary: 42B20, 47B07; Secondary: 42B25, 47G99
\end{abstract}

\vskip 0.6 true cm
\tableofcontents

\section{Introduction}

The foundational paper of Coifman-Rochberg-Weiss \cite{CRW1976} provided a constructive proof of the weak factorizations of the classical Hardy space $H^1$ in terms of Riesz transforms. The result depends upon the duality between $H^{1}$ and $BMO$ and upon a new result linking $BMO$ and the $L^{p}$ boundedness of certain commutator operators. Let $T$ be one of the Riesz transforms.  If $b$ is in $BMO$, then
the commutator $[b, T](f)=bT(f)-T(bf)$ is bounded on $L^{p}$ for $1<p<\infty$. Conversely, for some $p$ in $(1,\infty)$, the commutator $[b,T]$ is bounded on $L^{p}$, then $b$ is in $BMO$. These estimates have found many important applications in other areas of operator theory and partial differential equations. For example, the investigations of div-curl lemmas \cite{CLMS1993, LPPW2012} and additional interpretations in operator theory
\cite{NPTV2002,N1957} came out of this work.
The theory was then extended and generalized to several directions. For instance, Bloom \cite{Bloo1985} investigated the same result in the weighted setting; Uchiyama extended the boundednss results on the commutator to compactness \cite{Uchi1978} and considered the spaces of homogeneous type \cite{Uchi1981}; Krantz and Li in \cite{KL12001} and \cite{KL22001} have applied commutator theory to give a compactness characterization of Hankel operators on holomorphic Hardy spaces $H^{2}(D)$, where $D$ is a bounded, strictly pseudoconvex domain in $\mathbb{C}^n$. It is perhaps for this important reason that the boundedness of $[b,T]$ attracted one's attention among researchers in harmonic analysis and PDEs.
\medskip

Many authors are interested in multilinear operators (Coifman and Meyer \cite{CM1975, CM1978}, Christ and Journ\'{e} \cite{CJ1987}, Kenig and Stein \cite{KS1999}), it was oriented towards the
study of the Calder\'{o}n commutator. Multilinear Calder\'{o}n-Zygmund operators were introduced by Coifman and Meyer but were not systematically studied for about a quarter century until the appearance of \cite{GT2002}. The boundedness results for commutators with symbols in $BMO$ started to receive attention only a few years ago. P\'{e}rez-Torres \cite{PT2003} first introduced the $i$-th commutator of m-linear Calder\'{o}n-Zygmund operator $T$ and showed that
$[b,T]_i$
is bounded from $L^{p_1}\times\cdots\times L^{p_m}$ to $L^p$ provided that $b\in BMO$, $1<p_1,\cdots,p_m,p<\infty$ with $1/p=1/p_1+\cdots+1/p_m$. Subsequently, in \cite{LOPTT2009} Lerner et al. removed the restriction of that $p>1$ and established the multiple weighted version as well as the weak-type endpoint estimate, and see \cite{AnDuong, ChenWu1} for the non-smooth kernels cases. Iterated commutators of multilinear Calder\'{o}n-Zygmund operators and pointwise multiplication with functions in $BMO$ was studied by \cite{PPTT2014}, which complements and completes the theory developed by Lerner et al. in some sense.
\medskip

For the weighted theory in the multilinear setting, the pioneer work of weighted estimates and commutators in this multilinear setting were studied in \cite{GT22002} and \cite{PT2003}. The initial work of multivariable Rubio de Francia extrapolation theorem was obtained by \cite{CUM} (or \cite{D2011}) for $\vec{\omega}\in A_{\vec{P}}$ with $\omega_{i}\in A_{p_{i}}$. These works treat each variable separately with its own Muckenhoupt class of weights. Thus, it is very interesting to obtain the related result using the multivariable nature of the problem. In the celebrated work \cite{LOPTT2009} Lerner et al. resolved the problems proposed in \cite{GT22002} and \cite{PT2003} and established a theory of weights adapted to the multilinear setting. Recently, the long standing problem of multivariable Rubio de Francia extrapolation theorem for the multilinear Muckenhoupt classes $A_{\vec{P}}$ was showed by Li, Martell and Ombrosi in \cite{LMOarXiv}. In this paper, we will systematically study the necessity theory for multilinear commutators without individual conditions on $\omega_{i}$. The results include the following three aspects:
\medskip

(I) Note that most of the previous proofs for the necessity of bounded commutators is expand the kernel locally by Fourier series. Then, Chaffee and Cruz-Uribe \cite{ChafCruz2018} established the necessity of bounded commutators in a general Banach space structure. The result of Guo, Lian and Wu \cite{GLW2020} relax the restriction of Banach spaces in previous results to quasi-Banach spaces with very weak assumptions on the corresponding kernel. However, in \cite{ChafCruz2018} and \cite{GLW2020}, the weights class are restricted in a narrower class ($\vec{\omega}\in A_{\vec{P}}$ with $\omega_{i}\in A_{p_{i}}$) when they studied the commutators of multilinear singular integral operators on weighted Lebesgue space. Thus, it is a nature problem to obtain the weighted results without the restriction of that $\omega_{i}\in A_{p_{i}}$. After this paper was posted on arXiv, professor Li told me that part of the results had been resolved in \cite{L2020}, then we only consider the characterization of $BMO$ via the weak-$L\log^{+}L$ type weighted boundedness of multilinear commutators.
\medskip

(II) The boundedness result of linear commutators of multilinear Calder\'{o}n-Zygmund operators was shown in \cite{PT2003}. However, The necessity conclusion lasted a long time and the proofs in \cite{Chaf2016} treat each component independently. This might explain why any attempt to obtain the linear commutators has been unsuccessful in the last years: if we only know the boundedness of the linear commutator$[\Sigma \vec{b},T]$, it is not clear that whether each $[b_{i},T]_{i}$ is a bounded operator. In 2018, the linear characterization result was obtained in \cite{WZTadm}. However, the weighted results are not valied by the methods employed in \cite{WZTadm}. Then, some useful method would be quite necessary.
\medskip

(III) It has been an open question whether iterated commutators can be used to characterize $BMO$? The earlier work we find illustrative to present some functions, which shed light on the characterization result is not valid for multilinear maximal operator in \cite{HW2021}. However, some of the techniques do not apply to Calder\'{o}n-Zygmund operators, mainly because they lack positive kernels. Therefore, it needs some tedious calculations in applications.
\medskip

This paper is organized as follows. In order to promote reading, most of the basic definitions are given in section \ref{notation} and the expert reader can easily skip this part. After establishing some basic Lemmas (see Lemmas \ref{BMO-lem1} and \ref{BMO-lem2}) for the characterizations of $BMO$ space, the main results for the general commutators are presented in Section \ref{the general commutators 1} and Sections \ref{the general commutators 2}. Section \ref{the linear commutators} is concern with the linear commutators, in which some technique lemmas for the necessity of bounded linear commutators will be shown. We provide the examples and which establish that the estimates and the general theorems for the iterated commutators in Section \ref{the iterated commutators}. Finally, Appendix \ref{A} contains some further results for the weights function.

\section{Preliminaries}\label{notation}

Let $|E|$ denote the Lebesgue measure of a measurable set $E\subset \mathbb{R}^n$. Throughout this paper, the letter $C$ denotes constants which are independent of main variables and may change from one occurrence to another. $Q(x,r)$ denotes a cube centered at $x$, with side length $r$, sides parallel to the axes.

\subsection{Muckenhoupt weights.}

As we will work in the weighted setting, we need the notion of weighted $L^p$ spaces:
$L^p (\omega)=L^p(\mathbb{R}^n,\omega dx)$ denotes the collection of measurable functions $f$ on $\mathbb{R}^n$ such that
$$
\|f\|_{L^p (\omega)}:= \left( \int_{\mathbb{R}^n} |f(x)|^p \omega(x) \,dx \right)^{1/p} < \infty.
$$
We recall the definition of $A_{p}$ weight introduced by Muckenhoupt in \cite{M1972}, which give the characterization of all weights $\omega(x)$ such
that the Hardy-Littlewood maximal operator
$$
M(f)(x)=\sup_{Q\ni x}\frac{1}{|Q|}\int_{Q}|f(y)|dy
$$
is bounded on $L^{p}(\omega)$. For $1< p<\infty$ and a nonnegative locally integrable function $\omega$ on $\mathbb{R}^n$, $\omega$ is in the
Muckenhoupt $A_{p}$ class if it satisfies the condition
$$[\omega]_{A_{p}}:=\sup_{Q}\bigg(\frac{1}{|Q|}\int_{Q}\omega(x)dx\bigg)\bigg(\frac{1}{|Q|}\int_{Q}\omega(x)^{-\frac{1}{p-1}}dx\bigg)^{p-1}<\infty.$$
And a weight function $\omega$ belongs to the class $A_{1}$ if
$$[\omega]_{A_{1}}:=\frac{1}{|Q|}\int_{Q}\omega(x)dx\Big(\mathop\mathrm{ess~sup}_{x\in Q}\omega(x)^{-1}\Big)<\infty.$$
We write $A_{\infty}=\bigcup_{1\leq p<\infty}A_{p}$. For $\omega\in A_{\infty}$, there exists $0<\epsilon<\infty$ such that for all cubes $Q$ and all measurable subsets $A$ of $Q$, we have
$$\frac{\omega(A)}{\omega(Q)}\leq C\Big(\frac{|A|}{|Q|}\Big)^{\epsilon}.$$

In 2009, Lerner et al.\cite{LOPTT2009} build a theory of weights adapted to the multilinear setting as follows. Let $m\geq 2$, $1\leq p_{1},\cdots,p_{m}<\infty$ with $1/p=1/p_{1}+\cdots+1/p_{m}$, and $\vec{P}=(p_{1},\cdots,p_{m})$. Given $\vec{\omega}=(\omega_{1},\cdots,\omega_{m})$, set
$$\nu_{\vec{\omega}}=\prod_{i=1}^{m}\omega_{i}^{p/p_{i}}.$$
We say that $\vec{\omega}$ satisfies the $A_{\vec{P}}$ condition if
$$\sup_{Q}\Big(\frac{1}{|Q|}\int_{Q}\nu_{\vec{\omega}}(x)dx\Big)^{1/p}\prod_{i=1}^{m}\Big(\frac{1}{|Q|}\int_{Q}\omega_{i}^{1-p'_{i}}(x)dx\Big)^{1/p'_{i}}<\infty.$$
When $p_{i}=1$, $\big(\frac{1}{|Q|}\int_{Q}\omega_{i}^{1-p'_{i}}\big)^{1/p'_{i}}$ is understood as $(\inf_{Q} \omega_{j})^{-1}$.

\subsection{The Campanato spaces}
Let $0<q<\infty$ and $-n/q<\alpha<1$. A locally integrable function $f$ is said to belong to Campanato space $\mathcal{C}_{\alpha,q}$ if there exists a constant
$C > 0$ such that for any cube $Q\subset \mathbb{R}^n$,
$$\frac{1}{|Q|^{\alpha/n}}\bigg(\frac{1}{|Q|}\int_{Q}|f(x)-f_{Q}|^{q}dx\bigg)^{1/q}\leq C,$$
where $f_{Q}=\frac{1}{|Q|}\int_{Q}f(x)dx$ and the minimal constant $C$ is defined by $\|f\|_{\mathcal{C}_{\alpha,q}}$.

Campanato spaces are a useful tool in the regularity theory of PDEs due to their better structures,
which allows us to give an integral characterization of the spaces of H\"{o}lder continuous functions when $0<\alpha<1$. The Lipschitz (H\"{o}lder) and Campanato spaces are related by the following equivalences:
$$\|f\|_{Lip_{\alpha}}:=\sup_{x,h\in \mathbb{R}^n,h\neq 0}\frac{|f(x+h)-f(x)|}{|h|^{\alpha}}\approx \|f\|_{\mathcal{C}_{\alpha,q}},\quad 0<\alpha<1.$$
The equivalence can be found in \cite{DS1984} for $q=1$, \cite{JTW1983} for $1<q<\infty$ and \cite{WZTmn} for $0<q<1$.

Specially, $\mathcal{C}_{0,q}=BMO$, the spaces of bounded mean oscillation. The crucial property of $BMO$ functions is the John-Nirenberg inequality \cite{JN1961},
$$|\{x\in Q: |f(x)-f_{Q}|>\lambda\}|\leq c_{1}|Q|e^{-\frac{c_{2}\lambda}{\|f\|_{BMO}}},$$
where $c_{1}$ and $c_{2}$ depend only on the dimension. A well-known immediate corollary of the John-Nirenberg inequality as follows:
$$\|f\|_{BMO}\approx \sup_{Q}\frac{1}{|Q|}\Big(\int_{Q}|f(x)-f_{Q}|^{p}dx\Big)^{1/p},$$
for all $1<p<\infty$. In fact, the equivalence also holds for $0<p<1$. See, for example, the work of Str\"{o}mberg \cite{Str1979}(or \cite{HT2019} and \cite{WZTams} for the general case).

In order to obtain the characterized results of commutators on weighted Lebesgue spaces, we need establish some characterization of $BMO$ in terms of weights class. Muckenhoupt and Wheeden \cite{MW1974} first made deep connection between Muckenhoupt weights and $BMO$ function. They proved that a function $f$ is in $BMO$ if and only if it is of bounded mean oscillation with respect to $\omega$ for all $\omega\in A_{\infty}$. That is, for any $\omega\in A_{\infty}$,
$$\|f\|_{BMO}\approx\|f\|_{BMO_{\omega}}:=\sup_{Q}\frac{1}{\omega(Q)}\int_{Q}|f(x)-f_{\omega,Q}|dx.$$
where $f_{\omega,Q}=\frac{1}{\omega(Q)}\int_{Q}f(x)\omega(x)dx$. A general and very meaningful result of Hart and Torres \cite{HT2019} showed that for any $\mu,\omega\in A_{\infty}$,
$$\|f\|_{BMO}\approx\|f\|_{BMO^{p}_{\mu,\omega}}:=\sup_{Q}\Big(\frac{1}{\omega(Q)}\int_{Q}|f(x)-f_{\mu,Q}|^{p}dx\Big)^{1/p}, \quad 0<p<\infty.$$

\subsection{Multilinear Caldron-Zygmund operators}
Recall that $m$-Calder\'{o}n-Zygmund operator $T$ is a bounded operator which satisfies
$$\|T(f_{1},\cdots, f_{m})\|_{L^{p}}\leq C\|f_{1}\|_{L^{p_{1}}}\times\cdots\times\|f_{m}\|_{L^{p_{m}}},$$
for some $1<p_{1},\cdots,p_{m}<\infty$ with $1/p=1/p_{1}+\cdots+1/p_{m}$ and the function $K$, defined off the diagonal $y_{0}=y_{1}=\cdots=y_{m}$ in $(\mathbb{R}^{n})^{m+1}$, satisfies the conditions as follow:

(1) The function $K$ satisfies the size condition.
$$|K(y_{0},y_{1},\cdots,y_{m})|
\leq\frac{C}{\big(\sum_{k=1}^{m}|y_{k}-y_{0}|\big)^{mn}};$$

(2) The function $K$ satisfies the regularity condition. For some $\gamma>0$ and all $1\leq i\leq m$,
if $|y_{i}-y'_{i}|\leq \frac{1}{2}\max_{0\leq k\leq m}|y_{0}-y_{k}|$,
$$|K(y_{0},\cdots,y_{i},\cdots,y_{m})-K(y_{0},\cdots,y'_{i},\cdots,y_{m})|
\leq\frac{C|y_{i}-y'_{i}|^{\gamma}}{\big(\sum_{k=1}^{m}|y_{k}-y_{0}|\big)^{mn+\gamma}}.$$
Then we say $K$ is a $m$-linear Calder\'{o}n-Zygmund kernel. If $x\notin \bigcap_{i=1}^{m}{\rm supp} f_{i} $, then
$$T(f_{1},\cdots,f_{m})(x)=\int_{\mathbb{R}^n}\int_{\mathbb{R}^n}K(x,y_{1},\cdots,y_{m})f_{1}(y_{1})\cdots f_{m}(y_{m})dy_{1}\cdots dy_{m}.$$

In this paper, we will consider the kernel $K(y_{0},y_{1},\cdots,y_{m})$ is actually of the form $K(y_{0}-y_{1},\cdots,y_{0}-y_{m})$, the multilinear Riesz transforms are special examples of this form.

\subsection{Sharp maximal operators}
For $\delta>0$, let $M_{\delta}$ be the maximal function
$$M_{\delta}(f)(x)=M(|f|^{\delta})(x)^{1/\delta}=\bigg(\frac{1}{|Q|}\int_{Q}|f(y)|^{\delta}dy\bigg)^{1/\delta}.$$
and $M^{\sharp}$ be the sharp maximal function (see \cite{FSsharp})
$$M^{\sharp}(f)(x)=\sup_{Q\ni x}\inf_{c}\frac{1}{|Q|}\int_{Q}|f(y)-c|dy\approx \sup_{Q\ni x}\frac{1}{|Q|}\int_{Q}|f(y)-f_{Q}|dy.$$

The classical result of Fefferman and Stein \cite{FSsharp} showed that for $0<p,\delta<\infty$ and $\omega\in A_{\infty}$, then there exists $C>0$ (depending on the $A_{\infty}$ constant of $\omega$) such that
$$\int_{\mathbb{R}^n}(M_{\delta}(f)(x))^{p}\omega(x)dx\leq C\int_{\mathbb{R}^n}(M^{\sharp}_{\delta}(f)(x))^{p}\omega(x)dx$$
for all function $f$ for which the left hand side is finite.

\subsection{Commutators}
We recall the notion of the general commutator, the linear commutator and the iterated commutator. The definitions of the general commutator and the linear commutator were given by P\'{e}rez and Torres in \cite{PT2003}, which coincides with the linear commutator $[b,T]$ when $m=1$. They proved that if $1<p,p_{1},\cdots,p_{m}<\infty$ and $1/p=1/p_{1}+\cdots+1/p_{m}$, then
$$b_{1},\cdots,b_{m}\in BMO\Longrightarrow[\Sigma\vec{b},T]: L^{p_{1}}\times \cdots \times L^{p_{m}}\rightarrow L^{p}.$$
Subsequently, in the work \cite{LOPTT2009} Lerner et al. removed the restriction of that $p>1$ and established the multiple weighted version as well as the weak-type
endpoint estimate.

Suppose $T$ is a $m$-linear operator and $\vec{b}=(b_{1},\cdots,b_{m})$. Define the general $i$-th commutator of $T$ with a measurable function $b_{i}$ by
$$[b_{i},T]_{i}(f_{1},\cdots,f_{m})(x):=b_{i}(x)T(f_{1},\cdots,f_{i},\cdots,f_{m})(x)-T(f_{1},\cdots,b_{i}f_{i},\cdots,f_{m})(x).$$
The linear commutator is defined by
$$[\Sigma  \vec{b},T](f_{1},\cdots,f_{m})(x):=\sum_{i=1}^{m}[b_{i},T]_{i}.$$
The iterated commutator is defined by
$$[\Pi\vec{b},T](f_{1},\cdots,f_{m})(x):=[b_{m},\cdots,[b_{1},T]_{1}]\cdots]_{m}(f_{1},\cdots,f_{m})(x).$$

\section{The general theorem}

After established the following John-Nirenberg inequality in \cite{WZTams}, we solved the open problem proposed in \cite{Chaf2016}. There are two constants $c_{1}$ and $c_{2}$ such that for any $\lambda>0$ and any cube $Q$,
$$\big|\{x\in Q: |f(x)-c_{Q}|>\lambda\}\big|\leq c_{1}\exp\big(\frac{-c_{2}\lambda}{\|f\|_{BMO^{p,*}}}\big)|Q|, \quad 0<p<\infty,$$
where $c_{Q}$ be the value which minimizes $\frac{1}{|Q|}\int_{Q}|f(x)-c|^{p}dx$. Therefore, for any $0<p<\infty$,
$$\|f\|_{BMO}\approx\|f\|_{BMO^{p,*}}:=\sup_{Q}\inf_{c}\Big(\frac{1}{|Q|}\int_{Q}|f(x)-c|^{p}dx\Big)^{1/p}.$$

In order to obtain the characterized results of weak-type estimate, we need establish some weak type characterizations of $BMO$ in terms of $A_{\infty}$ weights.
\begin{lemma}\label{BMO-lem1}
For any $\omega\in A_{\infty}$ and $0<p<\infty$, we have that
$$\|f\|_{BMO}\approx\|f\|_{BMO^{p,*}_{\omega}}:=\sup_{Q}\inf_{c}\Big(\frac{1}{\omega(Q)}\int_{Q}|f(x)-c|^{p}\omega(x)dx\Big)^{1/p}<\infty.$$
\end{lemma}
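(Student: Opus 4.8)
The plan is to establish the equivalence $\|f\|_{BMO}\approx\|f\|_{BMO^{p,*}_{\omega}}$ in two directions, leveraging the weighted John--Nirenberg inequality recalled just above the statement together with the $A_\infty$ structure of $\omega$.

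For the direction $\|f\|_{BMO^{p,*}_{\omega}}\lesssim\|f\|_{BMO}$: fix a cube $Q$ and take $c=c_Q$ to be the minimizing constant for the unweighted quantity $\frac{1}{|Q|}\int_Q|f-c|^p\,dx$, so that the weighted John--Nirenberg inequality gives $|\{x\in Q:|f(x)-c_Q|>\lambda\}|\le c_1 e^{-c_2\lambda/\|f\|_{BMO}}|Q|$. I would then estimate $\frac{1}{\omega(Q)}\int_Q|f(x)-c_Q|^p\omega(x)\,dx$ by writing it as $\frac{p}{\omega(Q)}\int_0^\infty \lambda^{p-1}\omega(\{x\in Q:|f(x)-c_Q|>\lambda\})\,d\lambda$ and applying the $A_\infty$ comparison $\omega(A)/\omega(Q)\le C(|A|/|Q|)^\epsilon$ with $A=\{x\in Q:|f(x)-c_Q|>\lambda\}$. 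This yields $\omega(\{|f-c_Q|>\lambda\})/\omega(Q)\le C(c_1 e^{-c_2\lambda/\|f\|_{BMO}})^\epsilon$, and the resulting integral $\int_0^\infty \lambda^{p-1} e^{-\epsilon c_2 \lambda/\|f\|_{BMO}}\,d\lambda$ converges and is comparable to $\|f\|_{BMO}^p$ up to constants depending on $p$, $\epsilon$, $c_1$, $c_2$. Taking the supremum over $Q$ and then the $p$-th root finishes this direction.

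For the reverse direction $\|f\|_{BMO}\lesssim\|f\|_{BMO^{p,*}_{\omega}}$: it suffices, by the characterization $\|f\|_{BMO}\approx\|f\|_{BMO^{p,*}}=\sup_Q\inf_c(\frac{1}{|Q|}\int_Q|f-c|^p\,dx)^{1/p}$ recalled above, to bound $\inf_c\frac{1}{|Q|}\int_Q|f-c|^p\,dx$ by $\|f\|^p_{BMO^{p,*}_{\omega}}$ uniformly in $Q$. Fixing $Q$ and choosing $c$ to be the near-minimizer for the weighted average over $Q$, I would split $Q$ into the "good" part $G=\{x\in Q:\omega(x)\ge \eta\,\omega(Q)/|Q|\}$ and its complement; on $G$ one has $\int_Q|f-c|^p\,dx$ over $G$ controlled by $\eta^{-1}\frac{|Q|}{\omega(Q)}\int_Q|f-c|^p\omega\,dx\le \eta^{-1}|Q|\,\|f\|^p_{BMO^{p,*}_\omega}$, while the complement has small Lebesgue measure by the reverse-direction consequence of $A_\infty$ (the weight cannot be too small on too large a set). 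Controlling $|f-c|$ on this small exceptional set is the delicate point, so instead I expect it is cleaner to argue by a self-improvement/stopping-time style iteration, or simply invoke the Hart--Torres result $\|f\|_{BMO}\approx\|f\|_{BMO^p_{\mu,\omega}}$ with $\mu=\omega$ combined with the observation that replacing $f_{\omega,Q}$ by the infimum over $c$ only decreases the quantity, hence $\|f\|_{BMO^{p,*}_\omega}\le\|f\|_{BMO^p_{\omega,\omega}}\approx\|f\|_{BMO}$ for the easy inequality and $\|f\|_{BMO}\approx\|f\|_{BMO^p_{\omega,\omega}}\lesssim\|f\|_{BMO^{p,*}_\omega}$ would need the comparison $f_{\omega,Q}-c$ to be absorbed, which follows since $|f_{\omega,Q}-c|^p\le\frac{1}{\omega(Q)}\int_Q|f-c|^p\omega\,dx$ by Jensen.

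The main obstacle is the lower bound on $\|f\|_{BMO}$: passing from a weighted oscillation with a freely-chosen constant $c$ back to the unweighted $BMO$ norm requires care because the weight can degenerate on part of $Q$, and one must ensure the chosen $c$ is simultaneously a good center in the unweighted sense. I expect the cleanest route is to combine the Hart--Torres equivalence with Jensen's inequality as sketched, reducing everything to the already-established unweighted and weighted John--Nirenberg machinery; the remaining work is then the routine distribution-function computation in the first direction.
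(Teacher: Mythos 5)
Your first direction ($\|f\|_{BMO^{p,*}_{\omega}}\lesssim\|f\|_{BMO}$) is fine: either the distribution-function computation with the $A_\infty$ comparison $\omega(A)/\omega(Q)\le C(|A|/|Q|)^{\epsilon}$, or the one-line observation that the infimum over $c$ is dominated by the choice $c=f_{\mu,Q}$ so that the Hart--Torres equivalence applies, works; the paper uses the latter.

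The reverse direction, however, has a genuine gap. Your proposed resolution is to invoke $\|f\|_{BMO}\approx\|f\|_{BMO^{p}_{\omega,\omega}}$ and absorb the difference of constants via ``$|f_{\omega,Q}-c|^{p}\le\frac{1}{\omega(Q)}\int_{Q}|f-c|^{p}\omega\,dx$ by Jensen.'' This inequality requires the convexity of $t\mapsto t^{p}$, i.e.\ $p\ge 1$; for $0<p<1$ Jensen runs the other way (the $L^{1}(\omega)$ average dominates the $L^{p}(\omega)$ average, not vice versa), and the triangle inequality needed to combine the two terms is also only a quasi-triangle inequality. Since the lemma is asserted for all $0<p<\infty$, and the small exponents are precisely the ones needed later (e.g.\ in Lemma \ref{BMO-lem2} and the weak-type arguments), this is not a cosmetic issue. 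Your alternative sketch (good/bad set splitting) is, as you yourself note, incomplete at exactly the delicate point: on the exceptional set where $\omega$ degenerates you have no control of $|f-c|$ from the weighted oscillation alone.

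The paper closes this gap with a different and quite short device: since $\omega\in A_{\infty}$, fix $N$ with $\omega\in A_{N}$ and write
$$\int_{Q}|f(x)-c|^{p/N}\,dx=\int_{Q}|f(x)-c|^{p/N}\omega(x)^{1/N}\cdot\omega(x)^{-1/N}\,dx\le\Big(\int_{Q}|f(x)-c|^{p}\omega(x)\,dx\Big)^{1/N}\Big(\int_{Q}\omega(x)^{\frac{1}{1-N}}\,dx\Big)^{\frac{N-1}{N}},$$
then use the $A_{N}$ condition to bound the second factor by $[\omega]_{A_{N}}^{1/N}|Q|\,\omega(Q)^{-1/N}$. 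Taking $c$ to be a near-minimizer of the weighted quantity, this controls $\inf_{c}\frac{1}{|Q|}\int_{Q}|f-c|^{p/N}\,dx$ by $\|f\|_{BMO^{p,*}_{\omega}}^{p/N}$, and one concludes by the unweighted equivalence $\|f\|_{BMO}\approx\|f\|_{BMO^{q,*}}$ valid for every $0<q<\infty$ (the Str\"omberg-type John--Nirenberg result recalled at the start of Section 3). The point is that the exponent is sacrificed (from $p$ to $p/N$) rather than the range of $p$, and the infimum formulation of the unweighted $BMO^{q,*}$ norm absorbs the choice of constant without any Jensen step. You should either adopt this H\"older/$A_{N}$ argument or restrict your Jensen-based route to $p\ge 1$ and supply a separate self-improvement argument for $0<p<1$.
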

\begin{proof}
Since $\|f\|_{BMO^{p,*}_{\omega}}\leq \|f\|_{BMO^{p}_{\mu,\omega}}\approx\|f\|_{BMO}$ with $\mu\in A_{\infty}$, we need show that $\|f\|_{BMO}\lesssim \|f\|_{BMO^{p,*}_{\omega}}$ only.
By the property of $A_{\infty}$ weights, we conclude that for any $\omega\in A_{\infty}$, there exists a constant $N>0$ such that $\omega\in A_{N}$. Then
\begin{align*}
\int_{Q}|f(x)-c_{Q}|^{p/N}dx&=\int_{Q}|f(x)-c_{Q}|^{p/N}\omega(x)^{1/N}\cdot \omega(x)^{-1/N}dx\\
&\leq  \Big(\int_{Q}|f(x)-c_{Q}|^{p}\omega(x)dx\Big)^{1/N}\Big(\int_{Q}\omega(x)^{\frac{1}{1-N}}dx\Big)^{\frac{N-1}{N}}\\
&\leq  [\omega]_{A_{N}}\|f\|^{p/N}_{BMO^{p,*}_{\omega}}|Q|,
\end{align*}
where $c_{Q}$ be the value which minimizes $\frac{1}{|Q|}\int_{Q}|f(x)-c|^{p/N}dx$. From the equivalence of $BMO$ and $BMO^{p,*}$, $\|f\|_{BMO}\lesssim \|f\|_{BMO^{p,*}_{\omega}}$ follows from here.
\end{proof}
\begin{lemma}\label{BMO-lem2}
For any $\omega\in A_{\infty}$ and $0<p<\infty$, we have that
$$\|f\|_{BMO}\approx\|f\|_{BMO^{p}_{\omega,*}}:=\sup_{Q}\sup_{\lambda>0}\inf_{c}\frac{\lambda}{\omega(Q)^{1/p}}\omega\Big(\{x\in Q:|f(x)-c|>\lambda\}\Big)^{1/p}<\infty.$$
\end{lemma}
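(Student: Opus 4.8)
The plan is to deduce Lemma \ref{BMO-lem2} from Lemma \ref{BMO-lem1} by passing between the strong-type weighted $L^p$ oscillation and the weak-type weighted $L^p$ oscillation. First I would observe that one direction is essentially free: by Chebyshev's inequality, for any cube $Q$, any constant $c$, and any $\lambda>0$,
$$\lambda^{p}\,\omega\big(\{x\in Q:|f(x)-c|>\lambda\}\big)\leq \int_{Q}|f(x)-c|^{p}\omega(x)dx,$$
so that $\|f\|_{BMO^{p}_{\omega,*}}\leq \|f\|_{BMO^{p,*}_{\omega}}\approx\|f\|_{BMO}$ by Lemma \ref{BMO-lem1}. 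Hence the substantial content is the reverse inequality $\|f\|_{BMO}\lesssim\|f\|_{BMO^{p}_{\omega,*}}$.

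For the reverse direction, the key idea is to move to an exponent strictly smaller than $p$ and integrate the weak-type bound over the level sets. Fix $0<q<p$. For a cube $Q$ pick the minimizing constant $c=c_Q$ realizing (up to a factor $2$, say) the supremum defining $\|f\|_{BMO^{p}_{\omega,*}}$ at scale $Q$. Using the distribution-function formula with respect to the measure $\omega\,dx$ restricted to $Q$, I would write
$$\int_{Q}|f(x)-c_{Q}|^{q}\omega(x)dx=q\int_{0}^{\infty}\lambda^{q-1}\,\omega\big(\{x\in Q:|f(x)-c_{Q}|>\lambda\}\big)d\lambda,$$
split the integral at a threshold, bound the small-$\lambda$ part trivially by $\omega(Q)$ times a power of the threshold, and bound the large-$\lambda$ part using $\omega(\{x\in Q:|f(x)-c_{Q}|>\lambda\})\leq \|f\|_{BMO^{p}_{\omega,*}}^{p}\,\omega(Q)\,\lambda^{-p}$; since $q<p$, the resulting integral $\int \lambda^{q-1-p}d\lambda$ converges at infinity. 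Optimizing the threshold (or simply choosing it equal to $\|f\|_{BMO^{p}_{\omega,*}}$) yields
$$\Big(\frac{1}{\omega(Q)}\int_{Q}|f(x)-c_{Q}|^{q}\omega(x)dx\Big)^{1/q}\lesssim_{p,q}\|f\|_{BMO^{p}_{\omega,*}},$$
uniformly in $Q$. Taking the supremum over $Q$ gives $\|f\|_{BMO^{q,*}_{\omega}}\lesssim\|f\|_{BMO^{p}_{\omega,*}}$, and then Lemma \ref{BMO-lem1} applied with exponent $q$ gives $\|f\|_{BMO}\approx\|f\|_{BMO^{q,*}_{\omega}}\lesssim\|f\|_{BMO^{p}_{\omega,*}}$, completing the equivalence.

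The main obstacle — really the only delicate point — is the convergence of the tail integral, which is exactly why one must drop from $p$ down to some $q<p$ before integrating the weak bound; at the endpoint $q=p$ the integral $\int_1^\infty \lambda^{-1}d\lambda$ diverges logarithmically, so a direct weak-implies-strong argument at the same exponent fails. A secondary bookkeeping issue is that the infimum over $c$ sits inside both suprema, so one should be slightly careful that the constant $c_Q$ chosen to (nearly) attain the weak-type supremum is the same one used in the strong-type integral; this is harmless since we are proving an upper bound for $\|f\|_{BMO^{q,*}_{\omega}}$, where we get to choose $c_Q$ freely. Finally, one uses that $A_\infty$ is closed under the relevant operations so that the same weight $\omega$ is admissible in Lemma \ref{BMO-lem1} at the exponent $q$; no extra hypothesis on $\omega$ is needed.
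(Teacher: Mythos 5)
Your proof is essentially identical to the paper's: the paper likewise gets the easy direction from Chebyshev together with Lemma \ref{BMO-lem1}, and proves the converse by dropping to an exponent $q<p$, integrating the weak-type bound via the layer-cake formula, splitting at a threshold $N=\|f\|_{BMO^{p}_{\omega,*}}\big(\tfrac{q}{p-q}\big)^{1/p}$, and then invoking Lemma \ref{BMO-lem1} at exponent $q$. (Both arguments also share the same unexamined subtlety you half-acknowledge: since the infimum over $c$ sits inside the supremum over $\lambda$, the near-minimizing constant may a priori vary with $\lambda$, whereas the layer-cake integration needs a single $c_Q$ obeying the weak bound uniformly in $\lambda$; this is fixable by a standard argument comparing the minimizers at different levels, but the paper glosses over it exactly as you do.)
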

\begin{proof}
By the direct computation, we arrive at $\|\cdot\|_{BMO^{p}_{\omega,*}}\leq \|\cdot\|_{BMO^{p,*}_{\omega}}$. Then, we need only to prove that $\|\cdot\|_{BMO^{q,*}_{\omega}}\lesssim \|\cdot\|_{BMO^{p}_{\omega,*}}$ with $0<q<p<\infty$.

Let $f\in BMO^{p}_{\omega,*}$. Given a fixed cube $Q \subset \mathbb{R}^n$ and for any $\lambda>0$, let $c_{Q}$ be the value which minimizes $\frac{\lambda}{\omega(Q)^{1/p}}\omega\big(\{x\in Q:|f(x)-c_{Q}|>\lambda\}\big)$. Therefore,
$$\frac{1}{\omega(Q)^{1/p}}\Big(\lambda^{p}\omega\big\{x\in Q:|f(x)-c_{Q}|>\lambda\big\}\Big)^{1/p}\leq \|f\|_{BMO^{p}_{\omega,*}};$$
that is,
$$\omega\big\{x\in Q:|f(x)-c_{Q}|>\lambda\big\}\leq \|f\|^{p}_{BMO^{p}_{\omega,*}}\omega(Q)\lambda^{-p}.$$
It follows that
\begin{eqnarray*}
\int_{Q}|f(x)-c_{Q}|^{q}\omega(x)dx&=&q\int_{0}^{\infty}\lambda^{q-1}\omega\big\{x\in Q:|f(x)-c_{Q}|>\lambda\big\}d\lambda\\
&\leq&q\int_{0}^{N}\lambda^{q-1}\omega(Q)d\lambda+q\int_{N}^{\infty}\lambda^{q-1}\|f\|^{p}_{BMO^{p}_{\omega,*}}|Q|\lambda^{-p}d\lambda\\
&=&\omega(Q)N^{q}+\frac{q}{p-q}\|f\|^{p}_{BMO^{p}_{\omega,*}}\omega(Q)N^{q-p}.
\end{eqnarray*}
Choose
$$N=\|f\|_{BMO^{p}_{\omega,*}}\Big(\frac{q}{p-q}\Big)^{1/p},$$
which gives
$$\bigg(\frac{1}{\omega(Q)}\int_{Q}|f(y)-c_{Q}|^{q}\omega(x)dy\bigg)^{1/q}\leq 2\Big(\frac{q}{p-q}\Big)^{1/p}\|f\|_{BMO^{p}_{\omega,*}}.$$
Then
$$\|f\|_{BMO^{q}_{\mu,\omega}}\leq 2\Big(\frac{q_{1}}{p-q}\Big)^{1/p}\|f\|_{BMO^{p}_{\omega,*}}$$
and the lemma follows.
\end{proof}

Lerner et al. in \cite[Theorem 3.6]{LOPTT2009} showed that the multilinear $A_{\vec{P}}$ condition has the following interesting characterization in terms of the linear $A_{p}$ classes. This lemma plays an important role in this paper.
\begin{lemma}\label{weights-lem1}
Let $\vec{\omega}=(\omega_{1},\cdots,\omega_{m})$ and $1\leq p_{1},\cdots,p_{m}<\infty$. Then $\vec{\omega}\in A_{\vec{P}}$ if and only if
\begin{equation*}
    \left\{
   \begin{array}{ll}\vspace{1ex}
\omega_{i}^{1-p'_{i}}\in A_{mp'_{i}}, i=1,\cdots,m,\vspace{1ex}\\
\nu_{\vec{\omega}}\in A_{mp},
   \end{array}
 \right.
\end{equation*}
where the condition $\omega_{i}^{1-p'_{i}}\in A_{p'_{i}}$ in the case $p_{i}=1$ is understood as $\omega_{i}^{1/m}\in A_{1}$.
\end{lemma}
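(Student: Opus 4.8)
The plan is to unpack the $A_{\vec{P}}$ condition and recognize both parts of the claimed equivalence inside it by a careful bookkeeping of exponents. Recall that $\vec\omega\in A_{\vec P}$ means
$$\sup_Q\Big(\frac1{|Q|}\int_Q\nu_{\vec\omega}\Big)^{1/p}\prod_{i=1}^m\Big(\frac1{|Q|}\int_Q\omega_i^{1-p_i'}\Big)^{1/p_i'}<\infty,$$
and one wants to show this is equivalent to $\nu_{\vec\omega}\in A_{mp}$ together with $\omega_i^{1-p_i'}\in A_{mp_i'}$ for each $i$. The key observation is that the $A_{mp}$ condition for $\nu_{\vec\omega}$ reads
$$\sup_Q\Big(\frac1{|Q|}\int_Q\nu_{\vec\omega}\Big)^{1/p}\Big(\frac1{|Q|}\int_Q\nu_{\vec\omega}^{\frac{-1}{mp-1}}\Big)^{\frac{mp-1}{mp}\cdot m}<\infty$$
(I would rewrite $(mp)'-1=\tfrac1{mp-1}$ and $1/(mp)'=\tfrac{mp-1}{mp}$), while $\omega_i^{1-p_i'}\in A_{mp_i'}$ reads
$$\sup_Q\Big(\frac1{|Q|}\int_Q\omega_i^{1-p_i'}\Big)\Big(\frac1{|Q|}\int_Q\omega_i^{(1-p_i')\frac{-1}{mp_i'-1}}\Big)^{mp_i'-1}<\infty,$$
and one checks the inner exponent simplifies: $(1-p_i')\cdot\tfrac{-1}{mp_i'-1}=\tfrac{p_i'-1}{mp_i'-1}$, and one should relate this to $\nu_{\vec\omega}^{1/m}$-type quantities.

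First I would establish the easy direction. Assume the two conditions on $\nu_{\vec\omega}$ and the $\omega_i^{1-p_i'}$; I would produce the $A_{\vec P}$ product by writing $\nu_{\vec\omega}=\prod_i\omega_i^{p/p_i}$ and using H\"older's inequality on each average to split $\big(\frac1{|Q|}\int_Q\nu_{\vec\omega}\big)^{1/p}$ into a product over $i$ of factors controlled by $\big(\frac1{|Q|}\int_Q\omega_i^{1-p_i'}\big)^{-1/p_i'}$-type quantities, then invoke the $A_{mp_i'}$ condition of $\omega_i^{1-p_i'}$ to bound each $\big(\frac1{|Q|}\int_Q\omega_i^{1-p_i'}\big)^{-1}$ by the appropriate power of an average of $\omega_i^{1-p_i'}$; multiplying these against the factor $\prod_i\big(\frac1{|Q|}\int_Q\omega_i^{1-p_i'}\big)^{1/p_i'}$ from the definition and collecting exponents should close it. The case $p_i=1$ is handled separately with the stated convention $\omega_i^{1/m}\in A_1$, replacing the relevant average by an essential infimum throughout.

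For the converse — which I expect to be the main obstacle — I would start from $\vec\omega\in A_{\vec P}$ and first extract $\nu_{\vec\omega}\in A_{mp}$: I would bound $\big(\frac1{|Q|}\int_Q\nu_{\vec\omega}^{-1/(mp-1)}\big)^{(mp-1)/m}$ by a product over $i$ using H\"older with the conjugate exponents dictated by $\nu_{\vec\omega}^{-1/(mp-1)}=\prod_i\omega_i^{-p/(p_i(mp-1))}$, matching the powers so that each resulting factor is exactly a power of $\big(\frac1{|Q|}\int_Q\omega_i^{1-p_i'}\big)$; then the $A_{\vec P}$ bound gives $\nu_{\vec\omega}\in A_{mp}$. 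The subtler part is extracting $\omega_i^{1-p_i'}\in A_{mp_i'}$: here one needs to "decouple" the $i$-th weight from the others in the $A_{\vec P}$ product. The standard trick (as in Lerner et al.) is to test the $A_{\vec P}$ inequality against a well-chosen tuple — e.g. replace $\omega_j$ for $j\ne i$ by powers of $\omega_i^{1-p_i'}$ (or by constants on $Q$) so that the product collapses to exactly the $A_{mp_i'}$ quantity for $\omega_i^{1-p_i'}$; alternatively, one combines the already-proven $\nu_{\vec\omega}\in A_{mp}$ with duality/openness of $A_\infty$ classes. I would carry this out by writing $\nu_{\vec\omega}$ in terms of $\omega_i$ and the remaining weights, using the $A_{mp}$ information plus the defining product to isolate $\big(\frac1{|Q|}\int_Q\omega_i^{1-p_i'}\big)\big(\frac1{|Q|}\int_Q\omega_i^{(p_i'-1)/(mp_i'-1)}\big)^{mp_i'-1}$. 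Throughout, the only real content is tracking exponents so that every application of H\"older is with genuinely conjugate indices; once the bookkeeping is set up the estimates are routine, and the $p_i=1$ endpoint is again treated by the $\inf_Q$ convention. Since this is precisely \cite[Theorem 3.6]{LOPTT2009}, I would also simply cite that source for the detailed computation and record here only the exponent identities that make the equivalence transparent.
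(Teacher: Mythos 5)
The paper offers no proof of this lemma: it is quoted verbatim from \cite[Theorem 3.6]{LOPTT2009}, so your concluding decision to simply cite that source for the detailed computation is exactly what the paper does. Your sketch of the underlying argument is consistent with the proof in that reference --- both implications are H\"older's inequality with the exponent identities $\sum_{i}\frac{1}{mp_i'}=\frac{1}{(mp)'}$ and $(1-p_i')\bigl(1-(mp_i')'\bigr)=\frac{p_i'-1}{mp_i'-1}$ --- though one phrase is misleading: one cannot ``test the $A_{\vec P}$ inequality against a well-chosen tuple,'' since $A_{\vec P}$ is a property of the fixed tuple $\vec\omega$; the correct mechanism for extracting $\omega_i^{1-p_i'}\in A_{mp_i'}$ is the pointwise factorization $\omega_i^{(p_i'-1)/(mp_i'-1)}=\nu_{\vec\omega}^{\theta}\prod_{j\neq i}\omega_j^{-p\theta/p_j}$ with $\theta=\frac{p_i'}{p(mp_i'-1)}$, followed by H\"older with the conjugate exponents this identity dictates, which is the alternative you also indicate.
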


\subsection{The general commutator of multilinear Calder\'{o}n-Zygmund operators I}\label{the general commutators 1}
It has been an open question whether commutator of multilinear Calder\'{o}n-Zygmund operators can be used to characterize $BMO$? Chaffee \cite{Chaf2016} first considered this problem and obtained the following characterized theorem.

\medskip

{\bf Theorem A} (cf. \cite{Chaf2016})\quad Suppose that $K$ is a homogeneous function of degree $-mn$, and there exists a ball $\mathbb{B}\subset \mathbb{R}^{mn}$ such that $1/K$ can be expended to a Fourier series in $\mathbb{B}$. If $1<p_1,\cdots,p_m,{p}<\infty$, $\frac 1p=\frac 1{p_1}+\cdots+\frac 1{p_m}$,
$$b\in BMO\,\Longleftrightarrow\, [b,T]_i:\, L^{p_1}\times\cdots\times L^{p_m}\to L^p.$$

It was then also revisited by Li and Wick \cite{LW2018} using different techniques. In both results are considered only under the assumption $p>1$, since the proof required the use of H\"{o}lder's inequality with $p$ and $p'$. We remove the restriction of that $p>1$ in \cite{WZTams}.
\medskip

Later, the necessity of bounded multilinear commutator was extended by \cite{ChafCruz2018}(or \cite{GLW2020} for quasi-Banach space) in a very general structure. They considered all the function spaces $X$ with norm $\|\cdot\|_{X}$ satisfy the following basic assumptions.
\begin{enumerate}[(i)]
  \item $\|f\|_X=\||f|\|_X$;
  \item if $|f|\leqslant |g|$ a.e., then $\|f\|_X\leqslant \|g\|_X$;
  \item if $\{f_n\}$ is a sequence of $X$ such that $|f_n|$ increases to $|f|$ a.e., then
  $\|f_n\|_X$ increases to $\|f\|_X$;
  \item if $A$ is a bounded set of $\mathbb{R}^n$, then $\|\chi_A\|_X<\infty$.
\end{enumerate}

It is should be pointed that $\vec{\omega}\in A_{\vec{P}}$ does not imply $\omega_{k}\in L^{1}_{loc}$ for any $k$ (\cite[Remark 7.2]{LOPTT2009}), which showed that $\omega_{k}\in A_{p_{k}}$ may be not valid, and the condition $(\mathrm{iv})$ is not true in the weighted case. Thus, they proved the necessity theory for multilinear commutators with making stronger assumption that $\vec{\omega}\in A_{\vec{P}}$.

\medskip

{\bf Theorem B} (cf. \cite{ChafCruz2018}).\quad{\it Given $\vec{P}$ with $p>1$, suppose $\omega_{k}\in A_{p_{k}}, k=1,\cdots,m,$ and $\vec{\omega}\in A_{\vec{P}}$. If $T$ is a regular bilinear singular integral, and $b$ is function such that for $i=1,\cdots,m$, $[b,T]_{i}: L^{p_{1}}\times\cdots\times L^{p_{m}}(\omega_{m})\rightarrow L^{p}(\nu_{\vec{\omega}})$, then $b\in BMO$.}

\medskip

Guo, Lian and Wu \cite{GLW2020} removed the restriction of that $p>1$ and gave a characterized result of weak-type endpoint estimate. However, the weights are still restricted in a narrower class. Later, the results with the genuinely multilinear weights had been resolved in \cite{L2020} for $1<p_{1},\cdots,p_{m}<\infty$. Then we consider the endpoint case in this subsection.

\begin{theorem}\label{main1}
Let $m\in \mathbb{N}$ and $T$ be a $m$-linear Calder\'{o}n-Zygmund operator with kernel $K$ and there exists a ball $\mathbb{B}\subset \mathbb{R}^{mn}$ such that $1/K$ can be expended to a Fourier series in $\mathbb{B}$.
Then the following three statements are equivalent:
\begin{enumerate}
\item [\rm(1)] $b\in BMO$.
\item [\rm(2)] For $\nu_{\vec{\omega}}=\prod_{i=1}^{m}\omega_{i}^{1/m}$ and $\vec{\omega}\in A_{1,\cdots,1}$, $\Phi(t)=t(1+\log^{+}t)$,
$$\nu_{\vec{\omega}}\big(\{x\in \mathbb{R}^{n}:|[b,T]_{i}(f_{1},\cdots,f_{m})|>\lambda^{m}\}\big)\lesssim \prod_{i=1}^{m}\Big(\int_{\mathbb{R}^n}\Phi(\frac{|f(y)|}{\lambda})\omega_{i}(y)dy\Big)^{1/m}$$
for any $\lambda>0$.
\end{enumerate}
\end{theorem}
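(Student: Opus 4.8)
The plan is to establish the two implications $(1)\Rightarrow(2)$ and $(2)\Rightarrow(1)$ separately. For the forward direction $(1)\Rightarrow(2)$, this is essentially a weighted weak-$L\log^+L$ endpoint estimate for the commutator $[b,T]_i$. I would reduce it to the known endpoint theory: when $b\in BMO$, the commutator $[b,T]_i$ maps $L^1\times\cdots\times L^1$ into the appropriate weak-type space with the $\Phi(t)=t(1+\log^+t)$ bump, and this is known in the unweighted case (P\'erez--Torres, Lerner et al., \cite{PPTT2014}); the weighted version for $\vec\omega\in A_{1,\dots,1}$ follows by combining the pointwise sharp-maximal-function estimates for commutators with the Fefferman--Stein inequality recalled in Section \ref{notation}, together with the characterization of $A_{\vec P}$ weights via linear $A_p$ classes in Lemma \ref{weights-lem1}. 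The key technical input is that for $\vec\omega\in A_{1,\dots,1}$ one has $\nu_{\vec\omega}=\prod\omega_i^{1/m}\in A_1$ and each $\omega_i^{1/m}\in A_1$, so the Fefferman--Stein machinery applies.

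For the harder direction $(2)\Rightarrow(1)$, I would use the classical strategy of expanding $1/K$ locally by its Fourier series in the ball $\mathbb{B}$. Write $1/K(y) = \sum_{j} a_j e^{i v_j\cdot y}$ for $y$ in a suitable dilate of $\mathbb{B}$; after a translation and scaling one transports this to a ball $Q_0$ centered at some $x_0$ with an associated "far" cube $Q$ at distance comparable to $\ell(Q)$, so that for $x\in Q$ and $y_i\in Q_0$ the argument $x_0 - y_1,\dots,x_0-y_m$ (or the appropriate combination $x-y_i$) lies in the region where the Fourier expansion converges. Then, testing the commutator on the functions $f_i(y_i) = \overline{\mathrm{sgn}(b(y_i)-b_{Q_0})}\,e^{-i v_j^{(i)}\cdot y_i}\chi_{Q_0}(y_i)$ for one coordinate and $f_\ell = e^{-i v_j^{(\ell)}\cdot y_\ell}\chi_{Q_0}$ for the others, one extracts from $[b,T]_i(f_1,\dots,f_m)(x)$, for $x\in Q$, a main term proportional to $\frac{1}{|Q_0|}\int_{Q_0}|b(y)-b_{Q_0}|\,dy$ times a harmless factor, plus an error controlled by the decay of the Fourier coefficients $a_j$.

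The crucial point, and where the endpoint nature of the statement matters, is how to convert the weak-$L\log^+L$ inequality in $(2)$ into a BMO bound. Applying $(2)$ with the chosen test functions and with $\lambda\sim 1$ gives, for $x\in Q$,
\[
\nu_{\vec\omega}\big(\{x\in Q: |[b,T]_i(f_1,\dots,f_m)(x)| > c\}\big)\,\Big(\tfrac{1}{|Q_0|}\int_{Q_0}|b(y)-b_{Q_0}|\,dy\Big)^{\!?}\;\lesssim\;\prod_{i=1}^m\Big(\int \Phi(|f_i|)\omega_i\Big)^{1/m},
\]
and since the $f_i$ are bounded by $\chi_{Q_0}$ one has $\int\Phi(|f_i|)\omega_i \lesssim \omega_i(Q_0)$, while the left side is bounded below by (a constant times) $\frac{1}{|Q_0|}\int_{Q_0}|b-b_{Q_0}|\cdot\nu_{\vec\omega}(Q)$. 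Choosing $Q$ and $Q_0$ with comparable sidelengths and using $\vec\omega\in A_{1,\dots,1}$ to compare $\nu_{\vec\omega}(Q)$, $\prod\omega_i(Q_0)^{1/m}$ and $|Q_0|$ (this is exactly the multilinear $A_1$ balance, and is where Lemma \ref{weights-lem1} and the $A_\infty$ comparability of $A_1$ weights on adjacent cubes of comparable size are used) yields
\[
\frac{1}{|Q_0|}\int_{Q_0}|b(y)-b_{Q_0}|\,dy \lesssim 1
\]
uniformly over all cubes $Q_0$, i.e. $b\in BMO$. I expect the main obstacle to be the bookkeeping in the Fourier-expansion step — controlling the error terms coming from the tails of the series uniformly in the scale and location of $Q_0$, and verifying that the weighted weak-type inequality, which is genuinely weaker than an $L^p$ bound, still isolates the BMO average after the weight comparison; in particular one must be careful that the weak-$(1,1)$-type bound on the left does not lose the factor of $\nu_{\vec\omega}(Q)$ one needs. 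A secondary subtlety is handling the case $i\neq$ the coordinate on which the sign function is placed, which requires choosing which variable carries $\mathrm{sgn}(b-b_{Q_0})$ to match the index of the commutator, and, when $n$ or $m$ forces it, iterating the John--Nirenberg-type equivalences from Lemma \ref{BMO-lem1} and Lemma \ref{BMO-lem2} to pass from the weak oscillation estimate back to the standard $BMO$ norm.
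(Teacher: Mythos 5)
Your overall strategy (Fourier expansion of $1/K$, testing on modulated indicators, then a weight comparison) is the right family of ideas, but two steps as written do not survive the hypotheses of this theorem, and both are exactly the points where the paper's proof departs from the classical unweighted argument.

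First, your test functions $f_i=e^{-iv_j^{(i)}\cdot y_i}\chi_{Q_0}(y_i)$ (times a sign) are \emph{unweighted}, so the right-hand side of hypothesis (2) becomes $\prod_i\big(\int_{Q_0}\Phi(|f_i|/\lambda)\,\omega_i\big)^{1/m}\approx\prod_i\omega_i(Q_0)^{1/m}$. But for $\vec{\omega}\in A_{1,\cdots,1}$ the individual weights $\omega_i$ need not be locally integrable (see \cite[Remark 7.2]{LOPTT2009}, quoted in Section \ref{the general commutators 1}); only $\omega_i^{1/m}\in A_1$ is available from Lemma \ref{weights-lem1}. Hence $\omega_i(Q_0)$ may be infinite, inequality (2) applied to your test functions is vacuous, and no information about $b$ is extracted. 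This is precisely the obstruction that forces the previous literature to assume $\omega_k\in A_{p_k}$ (or $\omega_k\in A_1$) individually. The paper's fix is to build the reciprocal weights into the test functions: $g^k_l=e^{-i\frac{\delta}{r}v^k_l\cdot y_k}\,\omega_k^{-1}\chi_{Q'}\cdot\frac{|Q'|}{\omega_k^{-1}(Q')}$. Then $\omega_k^{1/m}\in A_1$ gives $\|g^k_l\|_{L^\infty}\lesssim 1$, the integral $\int\Phi(|g^k_l|/\lambda)\,\omega_k$ collapses to $\frac{1}{\lambda}\cdot\frac{|Q'|^2}{\omega_k^{-1}(Q')}$ (finite), and the Fourier identity produces the \emph{weighted} oscillation $|b(x)-b_{\mu_1,Q'}|$ with $\mu_1=\omega_1^{-1}$ rather than $|b(x)-b_{Q_0}|$. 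Any repair of your argument has to make this substitution; it is not a bookkeeping issue.

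Second, your passage from the weak-type inequality to $\frac{1}{|Q_0|}\int_{Q_0}|b-b_{Q_0}|\lesssim 1$ is not valid: the quantity controlled by (2) is $\nu_{\vec{\omega}}\big(\{x\in Q:|b(x)-c|>\lambda^m\}\big)$ for each fixed $\lambda$, and this is \emph{not} bounded below by $\frac{\nu_{\vec{\omega}}(Q)}{|Q_0|}\int_{Q_0}|b-b_{Q_0}|$ (the oscillation could be concentrated on a set of small $\nu_{\vec{\omega}}$-measure). One must record the bound $\inf_c\frac{\lambda}{\nu_{\vec{\omega}}(Q)}\nu_{\vec{\omega}}\big(\{x\in Q:|b(x)-c|>\lambda^m\}\big)\lesssim\big(\sum_l|a_l|\big)^{1/m}$ \emph{uniformly in $\lambda>0$ and $Q$}, which is membership in the space $BMO^{p}_{\omega,*}$ of Lemma \ref{BMO-lem2}, and then invoke Lemmas \ref{BMO-lem2} and \ref{BMO-lem1} to return to $BMO$; this is the main chain of the argument, not a secondary subtlety. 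Relatedly, you must split into the regimes $\lambda\lesssim\|g^k_l\|_{L^\infty}(\sum_l|a_l|)^{1/m}$ (where the conclusion is trivial) and $\lambda$ large (where $\Phi(t)=t$ on the relevant range), since $\Phi$ is superlinear for $t>1$ and the right-hand side of (2) would otherwise not linearize. A further small point: the paper places the sign factor $s(x)=\overline{\mathrm{sgn}\big(\int_{Q'}(b(x)-b(y))\mu_1(y)\,dy\big)}$ on the $x$-side (in $h_l$), which directly produces $|b(x)-b_{\mu_1,Q'}|$; putting $\mathrm{sgn}(b(y)-b_{Q_0})$ inside the $y$-variable test function as you propose yields $\int_{Q_0}(b(x)-b(y))\,\mathrm{sgn}(b(y)-b_{Q_0})\,dy$, which differs from $\int_{Q_0}|b(y)-b_{Q_0}|\,dy$ by a term $(b(x)-b_{Q_0})\int_{Q_0}\mathrm{sgn}(b(y)-b_{Q_0})\,dy$ that you would still need to control.
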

\begin{proof}
For simplicity, we only present the proof for $[b,T]_{1}$, since the other cases can be
treated similarly. We need to prove $(2)\Longrightarrow (1)$, since $(1)\Longrightarrow (2)$ follow directly from \cite[Theorems 3.16]{LOPTT2009}.

Let $z_{0}\in \mathbb{R}^n$ such that $|(z_{0},\cdots,z_{0})|>m\sqrt{n}$ and let $\delta\in (0,1)$ small enough. Take $\mathbb{B}=B\big((z_{0},\cdots,z_{0}),\delta\sqrt{mn}\big)\subset \mathbb{R}^{mn}$ be the ball, these conditions guarantee that $\mathbb{B}\cap \{0\}=\emptyset$, avoiding any potential singularity of $1/K$. Which we can express $1/K$ as an absolutely convergent Fourier series of the form
$$\frac{1}{K(y_{1},\cdots,y_{m})}=\sum_{l}a_{l}e^{\mathrm{i}v_{l}\cdot(y_{1},\cdots,y_{m})}, \quad (y_{1},\cdots,y_{m})\in \mathbb{B},$$
with $\sum_{l}|a_{l}|<\infty$ and we do not care about the vectors $v_{l}\in \mathbb{R}^{mn},$ but we will at times express them as $v_{l}=(v_{l}^{1},\cdots,v_{l}^{m})\in \mathbb{R}^{mn}.$
Set $z_{1}=\delta^{-1}z_{0}$ and note that
$$\big(|y_{1}-z_{1}|^{2}+\cdots+|y_{m}-z_{1}|^{2}\big)^{1/2}<\sqrt{mn}\Rightarrow \big(|\delta y_{1}-z_{0}|^{2}+\cdots+|\delta y_{m}-z_{0}|^{2}\big)^{1/2}<\delta \sqrt{mn}.$$
Then for any $(y_{1},\cdots,y_{m})$ satisfying the inequality on the left, we have
$$\frac{1}{K(y_{1},\cdots,y_{m})}=\frac{\delta^{-mn}}{K(\delta y_{1},\cdots,\delta y_{m})}=\delta^{-mn}\sum_{j}a_{j}e^{\mathrm{i}\delta v_{j}\cdot(y_{1},\cdots,y_{m})}.$$
Let $Q=Q(x_{0},r)$ be any arbitrary cube in $\mathbb{R}^n$. Set $\tilde{z}=x_{0}+rz_{1}$ and take $Q'=Q(\tilde{z},r)\subset \mathbb{R}^n$. So for any $x\in Q$ and $y_{1},\cdots,y_{m}\in Q'$, we have for any $i=1,2,\cdots,m$,
$$\Big|\frac{x-y_{i}}{r}-z_{1}\Big|\leq \Big|\frac{x-x_{0}}{r}\Big|+\Big|\frac{y_{i}-\tilde{z}}{r}\Big|\leq \sqrt{n},$$
which implies that
$$\bigg(\Big|\frac{x-y_{1}}{r}-z_{1}\Big|^{2}+\cdots+\Big|\frac{x-y_{m}}{r}-z_{1}\Big|^{2}\bigg)^{1/2}\leq \sqrt{mn}.$$
This yields that there is a constant $C_{m,n}$ such that $Q'\subset C_{m,n}Q$ and $Q\subset C_{m,n}Q'$. The doubling property of $A_{\infty}$ weight deduce that $\mu(Q')\approx \mu(Q)$ for any $\mu\in A_{\infty}$.

We can now estimate as follows. For any $k=1,\cdots,m$, we write
$\mu_{k}:=\omega_{k}^{-1},$
and let $s(x)=\overline{\mathrm{sgn}(\int_{Q'}(b(x)-b(y))\mu_{1}(y)dy)}$. Then
\begin{eqnarray*}
\begin{aligned}
&|b(x)-b_{\mu_{1},Q'}|
=\frac{s(x)}{\Pi_{k=1}^{m}\mu_{k}(Q')}\int_{(Q')^{m}}\big(b(x)-b(y_{1})\big)\prod_{k=1}^{m}\mu_{k}(y_{k}) d\vec{y}.
\end{aligned}
\end{eqnarray*}
Define the functions
\begin{equation*}
    \left\{
   \begin{array}{ll}\vspace{1ex}
g^{k}_{l}(y_{k})=e^{-\mathrm{i}\frac{\delta}{r}v^{k}_{l}\cdot y_{k}}\mu_{k}(y_{k})\chi_{Q'}(y_{k})\frac{|Q'|}{\mu_{k}(Q')}, & k=1,\cdots,m,\\
h_{l}(x)=e^{\mathrm{i}\frac{\delta}{r}v_{l}\cdot (x,\cdots,x)}\chi_{Q}(x)s(x),
   \end{array}
 \right.
\end{equation*}
which shows that
\begin{align*}
|b(x)-b_{\mu_{1},Q'}|&= s(x)\frac{r^{mn}\delta^{-mn}}{\Pi_{k=1}^{m}\mu_{k}(Q')}
\int_{(Q')^{m}}(b(x)-b(y_{1}))K(x-y_{1},\cdots,x-y_{m})\\
&\qquad\times\sum_{l}a_{l}e^{i\frac{\delta}{r}v_{l}\cdot(x-y_{1},\cdots,x-y_{m})}\prod_{k=1}^{m}\mu_{k}(y_{k})d\vec{y}\\
&=\delta^{-mn}\sum_{l}a_{l}[b,T]_{1}(g^{1}_{l},\cdots,g^{m}_{l})(x)h_{l}(x).
\end{align*}
It follows from Lemma \ref{weights-lem1} that $\omega^{1/m}_{k}\in A_{1}$ for all $k=1,\cdots,m$, then
$$\|\mu_{k}\chi_{Q'}\|_{L^{\infty}}=\|\omega_{k}^{-1/m}\chi_{Q'}\|^{m}_{L^{\infty}}\lesssim \big(\frac{|Q'|}{\omega^{1/m}_{k}(Q')}\big)^{m}. $$
On the other hand,
\begin{align*}
|Q|^{2}&=\Big(\int_{Q}\omega_{k}(x)^{\frac{1}{2m}}\omega_{k}(x)^{-\frac{1}{2m}}dx\Big)^{2}\\
&\leq \int_{Q}\omega_{k}(x)^{1/m}dx\int_{Q}\omega_{k}(x)^{-1/m}dx\\
&\leq \int_{Q}\omega_{k}(x)^{1/m}dx\big(\int_{Q}\omega_{k}(x)^{-1}dx\big)^{1/m}|Q|^{\frac{m-1}{m}},
\end{align*}
which gives us that
$$\frac{|Q'|}{\mu_{k}(Q')}=\frac{|Q'|}{\int_{Q'}\omega_{k}(x)^{-1}dx}\lesssim \big(\frac{\omega^{1/m}_{k}(Q')}{|Q'|}\big)^{m}.$$
Combining the above computations we obtain $\|g^{k}_{l}\|_{L^{\infty}}\lesssim 1$.

If $0<\lambda<\max\{\|g^{1}_{l}\|_{L^{\infty}},\cdots, \|g^{m}_{l}\|_{L^{\infty}}\}\big(\sum_{l}|a_{l}|\big)^{1/m}$, it is easy to see that
\begin{align*}
\inf_{c}\frac{\lambda}{\nu_{\vec{\omega}}(Q)}\nu_{\vec{\omega}}\big(x\in Q:|b(x)-c|>\lambda^{m}\big)\lesssim \lambda\lesssim \big(\sum_{l}|a_{l}|\big)^{1/m}.
\end{align*}
If $\max\{\|g^{1}_{l}\|_{L^{\infty}},\cdots, \|g^{m}_{l}\|_{L^{\infty}}\}\big(\sum_{l}|a_{l}|\big)^{1/m} \leq \lambda<\infty$, then $$\Phi\Big(\frac{\big(\sum_{l}|a_{l}|\big)^{1/m}|g^{k}_{l}(y_{k})|}{\lambda}\Big)=\frac{|g^{k}_{l}(y_{k})|\big(\sum_{l}|a_{l}|\big)^{1/m}}{\lambda}.$$
We can now continue the above estimate as follows:
\begin{align*}
\inf_{c}\nu_{\vec{\omega}}\big(x\in Q:|b(x)-c|>\lambda^{m}\big)
&\leq \nu_{\vec{\omega}}\big(x\in Q:|b(x)-b_{\mu_{1},Q'}|>\lambda^{m}\big)\\
&\lesssim \nu_{\vec{\omega}}\big(x\in Q:\sum_{l}|a_{l}|\big|[b,T]_{1}(g^{1}_{l},\cdots,g^{m}_{l})(x)\big|>\lambda^{m}\big)\\
&\lesssim \prod_{k=1}^{m}\Big(\int_{Q'}\Phi\Big(\frac{|g^{k}_{l}(y_{k})|\big(\sum_{l}|a_{l}|\big)^{1/m}}{\lambda}\Big)\omega_{k}(y_{k})dy_{k}\Big)^{1/m}\\
&\lesssim \frac{\big(\sum_{l}|a_{l}|\big)^{1/m}}{\lambda}\prod_{k=1}^{m}\Big(\frac{|Q'|^{2}}{\mu_{k}(Q')}\Big)^{1/m}.
\end{align*}
Since $\omega^{1/m}_{1},\cdots,\omega^{1/m}_{m}\in A_{1}$, we have $\frac{\omega^{1/m}_{k}(Q')}{|Q'|}\lesssim\inf_{x\in Q'}\omega_{k}(x)^{1/m}, k=1,\cdots,m$. Then
\begin{align*}
\omega^{1/m}_{1}(Q')\cdots\omega^{1/m}_{m}(Q')&\lesssim |Q'|^{m}\inf_{x\in Q}\omega_{1}(x)^{1/m}\cdots \inf_{x\in Q'}\omega_{m}(x)^{1/m}\\
&\lesssim |Q'|^{m-1}\int_{Q'}\omega_{1}(x)^{1/m}\cdots \omega_{m}(x)^{1/m}dx=|Q'|^{m-1}\nu_{\vec{\omega}}(Q').
\end{align*}
Combining with the above estimates, we conclude that
\begin{align*}
\inf_{c}\frac{\lambda}{\nu_{\vec{\omega}}(Q)}\nu_{\vec{\omega}}\big(x\in Q:|b(x)-c|>\lambda^{m}\big)\lesssim \big(\sum_{l}|a_{l}|\big)^{1/m}.
\end{align*}
From Lemma \ref{BMO-lem2}, we have that $b\in BMO$ and the proof is complete.
\end{proof}

\subsection{The general commutator of multilinear Calder\'{o}n-Zygmund operators II}\label{the general commutators 2}

The investigation on boundedness of the commutator for Calder\'{o}n-Zygmund operators with certain rough keneral are usually valid. The
necessity theory for bounded commutators only for a very nice subclass of Calder\'{o}n-Zygmund operators, since most of the previous proofs is expand the kernel locally by Fourier series. In 2018, Guo, Lian and Wu \cite{GLW2020} gave unified criterions on the necessity of commutators of non-smooth Calder\'{o}n-Zygmund operators.

\medskip

{\bf Theorem C.}(cf. \cite{GLW2020}) Let $T$ be a $m$-linear Calder\'{o}n-Zygmund operator with kernel $K$ satisfying following local properties: there exists an open cone $\widetilde{\Gamma}$ of $(\mathbb{R}^{n})^m$
  whose vertex is $0$, such that :
  \begin{enumerate}[(1)]
  \item[(i)] lower and upper bound: $$\frac{c}{(\sum_{k=1}^m|x-y_k|)^{mn}}\leqslant K(x,y_1,\cdots,y_m) \leqslant \frac{C}{(\sum_{k=1}^m|x-y_k|)^{mn}}$$
  for all $(x-y_1,\cdots,x-y_m)\in \widetilde{\Gamma}$ with $0<c<C$, or $c<C<0$;
  \item[(ii)]
  for any open cone $\overline{\Gamma}\subset \widetilde{\Gamma}$,
  there exists a sequence $\{h_l=(h_l^1,\cdots,h_l^m)\}_{l=1}^{\infty}$
  satisfying that $h_l\in \overline{\Gamma}$, $|h_l|\rightarrow \infty$ as $l\rightarrow \infty$, and
  \begin{equation*}
    \begin{aligned}
      &\Big\|\int_{\prod\limits_{{k=1}}^m(Q-\sqrt[n]{Q}h_l^k)}\big|K(\cdot,y_1,\cdots,y_m) \\
      &\qquad
      -K(\cdot,y_1,\cdots,y_{i-1},a_{Q}-\sqrt[n]{|Q|}h_l^i,\cdots,y_m)\big|dy_1\cdots\,dy_m \Big\|_{L^{\infty}(Q)}
      \times \frac{|h_l|^{mn}}{|Q|}\rightarrow 0,\,
    \end{aligned}
  \end{equation*}
as $l \rightarrow \infty$ uniformly for all cubes Q, where $a_Q$ denotes the center of $Q.$
  \end{enumerate}

\medskip

Let $b\in L_{loc}^1(\mathbb{R}^n)$ and $[b,T]_i$ be the $i$-th commutator generated by $T$ with $b$, $1\le i\le m$. Then the following three statements are equivalent:
\begin{enumerate}
  \item [\rm(1)] $b\in BMO(\mathbb{R}^n)$.

 \item  [\rm(2)] For $1<p_k<\infty$, $\omega_k\in A_{p_k}$, $k=1,\cdots, m$, with $1/p=1/p_1+\cdots+1/p_m$,
 $$\|[b,T]_i(f_1,\cdots,f_m)\|_{L^p(\nu_{\vec{\omega}})}\lesssim \prod_{k=1}^m\|f_k\|_{L^{p_k}(\omega_k)}.$$

 \item [\rm(3)] For $\omega_k\in A_1$, $k=1,\cdots,m$, $\Phi(t)=t(1+\log^+t)$,
  $$\nu_{\vec{\omega}}(\{x\in \mathbb{R}^n: |[b,T]_i(f_{1},\cdots,f_{m})(x)|>\lambda^m\})\lesssim \prod_{k=1}^m\left(\int_{\mathbb{R}^n}\Phi(\frac{|f_{k}(y)|}{\lambda})\omega_k(y)dy\right)^{{1}/{m}}$$
  for any $\lambda>0$.

\end{enumerate}

Our next theorem will remove the restriction of that $\omega_{k}\in A_{p_{k}}$ for all $k=1,\cdots,m$ and modify the regularity condition of the kernel.

\begin{theorem}\label{GC2}
Let $T$ be a $m$-linear Calder\'{o}n-Zygmund operator with kernel $K$ satisfying following local properties: there exists an open cone $\widetilde{\Gamma}$ of $(\mathbb{R}^{n})^m$
whose vertex is $0$ such that for any $\mu_{1},\cdots,\mu_{m}\in A_{\infty}$,
  \begin{enumerate}[(1)]
  \item[(i)] lower and upper bound: $$\frac{c}{(\sum_{k=1}^m|x-y_k|)^{mn}}\leqslant K(x,y_1,\cdots,y_m) \leqslant \frac{C}{(\sum_{k=1}^m|x-y_k|)^{mn}}$$
  for all $(x-y_1,\cdots,x-y_m)\in \widetilde{\Gamma}$ with $0<c<C$, or $c<C<0$;
  \item[(ii)]
  for any open cone $\overline{\Gamma}\subset \widetilde{\Gamma}$,
  there exists a sequence $\{h_l=(h_l^1,\cdots,h_l^m)\}_{l=1}^{\infty}$
  satisfying that $h_l\in \overline{\Gamma}$, $|h_l|\rightarrow \infty$ as $l\rightarrow \infty$, and
  \begin{equation*}
    \begin{aligned}
|h_l|^{mn} \times &\Big\|\int_{\prod\limits_{{k=1}}^m(Q-\sqrt[n]{Q}h_l^k)}\big|K(\cdot,y_1,\cdots,y_m) \\
      &\qquad
      -K(\cdot,y_1,\cdots,y_{i-1},a_{Q}-\sqrt[n]{|Q|}h_l^i,\cdots,y_m)\big|\prod_{k=1}^{\infty}\mu_{k}(y_{k})d\vec{y} \Big\|_{L^{\infty}(Q)}
      \rightarrow 0,\,
    \end{aligned}
  \end{equation*}
as $l \rightarrow \infty$ uniformly for all cubes Q, where $a_Q$ denotes the center of $Q.$
  \end{enumerate}

\medskip

Let $b\in L_{loc}^1(\mathbb{R}^n)$ and $[b,T]_i$ be the $i$-th commutator generated by $T$ with $b$, $1\le i\le m$. Then the following three statements are equivalent:
\begin{enumerate}
  \item [\rm(1)] $b\in BMO(\mathbb{R}^n)$.

 \item [\rm(2)] For $1<p_1,\cdots,p_{m}<\infty$ and $\vec{\omega}\in A_{\vec{P}}$ with $1/p=1/p_1+\cdots+1/p_m$, $$\|[b,T]_i(f_1,\cdots,f_m)\|_{L^p(\nu_{\vec{\omega}})}\lesssim \prod_{k=1}^m\|f_k\|_{L^{p_k}(\omega_k)}.$$

 \item [\rm(3)] For $1<p_1,\cdots,p_{m}<\infty$ and $\vec{\omega}\in A_{\vec{P}}$ with $1/p=1/p_1+\cdots+1/p_m$, $$\|[b,T]_i(f_1,\cdots,f_m)\|_{L^{p,\infty}(\nu_{\vec{\omega}})}\lesssim \prod_{k=1}^m\|f_k\|_{L^{p_k}(\omega_k)}.$$

 \item [\rm(4)] For $\vec{\omega}\in A_{1,\cdots,1}$, $\Phi(t)=t(1+\log^+t)$,
  $$\nu_{\vec{\omega}}(\{x\in \mathbb{R}^n: |[b,T]_i(f)(x)|>\lambda^m\})\lesssim \prod_{k=1}^m\left(\int_{\mathbb{R}^n}\Phi(\frac{|f_{k}(y)|}{\lambda})\omega_k(y)dy\right)^{{1}/{m}}$$
  for any $\lambda>0$.

\end{enumerate}
\end{theorem}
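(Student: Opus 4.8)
\textbf{Proof proposal for Theorem \ref{GC2}.}
The plan is to follow the same circle of implications as in Theorem \ref{main1} and Theorem C, namely $(1)\Rightarrow(2)\Rightarrow(3)\Rightarrow(4)\Rightarrow(1)$, where the forward implications are already available in the literature and the only substantive work is the endpoint necessity $(4)\Rightarrow(1)$ under the genuinely multilinear hypothesis $\vec\omega\in A_{1,\dots,1}$. For the easy arrows: $(1)\Rightarrow(2)$ is the weighted boundedness of commutators of multilinear Calderón--Zygmund operators from \cite{PT2003,LOPTT2009}; $(2)\Rightarrow(3)$ is trivial since $L^p(\nu_{\vec\omega})\hookrightarrow L^{p,\infty}(\nu_{\vec\omega})$; and $(3)\Rightarrow(4)$ follows from a multilinear interpolation/extrapolation argument together with the characterization of $A_{\vec P}$ in terms of linear $A_p$ classes (Lemma \ref{weights-lem1}) --- alternatively one can derive $(4)$ directly from $(1)$ using \cite[Theorem 3.16]{LOPTT2009}, closing the loop once $(4)\Rightarrow(1)$ is in hand. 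So I would state these three quickly and then concentrate all effort on $(4)\Rightarrow(1)$.

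For $(4)\Rightarrow(1)$ I would mimic the argument in the proof of Theorem \ref{main1}, but replacing the Fourier-series expansion of $1/K$ by the cone/oscillation hypothesis (i)--(ii). Fix an arbitrary cube $Q=Q(x_0,r)$; because of (ii) we may pick $l$ large and translate $Q$ by the vector $\sqrt[n]{|Q|}\,h_l$ to produce an auxiliary cube $Q'$ comparable to $Q$ (with comparability constants depending on the chosen subcone $\overline\Gamma$), and such that for $x\in Q$ and $y_k\in Q'$ the argument $(x-y_1,\dots,x-y_m)$ lies in $\overline\Gamma\subset\widetilde\Gamma$, so the lower bound in (i) applies and $K$ has a fixed sign there. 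Writing $\mu_k:=\omega_k^{-1}$ (which lies in $A_\infty$ since $\omega_k^{1/m}\in A_1$ by Lemma \ref{weights-lem1}, hence $\omega_k\in A_\infty$ and $\omega_k^{-1}\in A_\infty$), and using $s(x)=\overline{\mathrm{sgn}(\int_{Q'}(b(x)-b(y))\mu_1(y)\,dy)}$, I would write $|b(x)-b_{\mu_1,Q'}|$ as an average over $(Q')^m$ of $(b(x)-b(y_1))$ against $\prod_k\mu_k(y_k)$, then insert $1=K\cdot K^{-1}$ using the lower bound in (i), and use (ii) to replace $K(x,y_1,\dots,y_m)$ by $K$ evaluated at $a_Q-\sqrt[n]{|Q|}h_l^i$ in the $i$-th slot up to an error that tends to $0$ after multiplication by $|h_l|^{mn}$ --- this is exactly where the $\prod_k\mu_k(y_k)\,d\vec y$-weighted oscillation in hypothesis (ii) is designed to be used. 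The resulting main term is a constant multiple of $[b,T]_i$ applied to the normalized bump functions $g_l^k(y_k)=e$-type factors times $\mu_k\chi_{Q'}|Q'|/\mu_k(Q')$, times a unimodular cutoff $h_l(x)\chi_Q(x)$.

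The remaining estimates are then literally those of Theorem \ref{main1}: from $\omega_k^{1/m}\in A_1$ one gets $\|g_l^k\|_{L^\infty}\lesssim 1$ via the two inequalities $\|\omega_k^{-1/m}\chi_{Q'}\|_\infty\lesssim(|Q'|/\omega_k^{1/m}(Q'))$ and $|Q'|/\mu_k(Q')\lesssim(\omega_k^{1/m}(Q')/|Q'|)^m$ (a Cauchy--Schwarz/Hölder splitting of $|Q'|$); then one applies hypothesis $(4)$ with $f_k=g_l^k$, uses that $\Phi(t)=t$ for $t\ge 1$ to linearize the right-hand side, and finally uses the $A_1$ lower bound $\omega_k^{1/m}(Q')/|Q'|\lesssim\inf_{Q'}\omega_k^{1/m}$ together with $\prod_k\inf_{Q'}\omega_k^{1/m}\lesssim|Q'|^{-1}\nu_{\vec\omega}(Q')$ to collapse everything to the bound $\inf_c \lambda\,\nu_{\vec\omega}(Q)^{-1}\nu_{\vec\omega}(\{x\in Q:|b(x)-c|>\lambda^m\})\lesssim(\sum_l|a_l|)^{1/m}$ (with $\sum_l|a_l|$ replaced by the $O(1)$ constant coming from (i)--(ii) in the present non-smooth setting); the case of small $\lambda$ is handled trivially as before. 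Lemma \ref{BMO-lem2} then upgrades this weak-type oscillation control to $b\in BMO$. The main obstacle I anticipate is purely technical: carefully bookkeeping the comparability constants between $Q$ and $Q'$ and verifying that the error term produced by hypothesis (ii) --- now weighted by $\prod_k\mu_k(y_k)$ rather than by $|Q|^{-1}$ as in Theorem C --- really does vanish after multiplication by $|h_l|^{mn}$, uniformly in $Q$; once that is checked, the weighted endpoint machinery from Theorem \ref{main1} goes through with only cosmetic changes.
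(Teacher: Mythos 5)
There is a genuine gap at the heart of your $(4)\Rightarrow(1)$ argument. The step ``insert $1=K\cdot K^{-1}$ using the lower bound in (i)'' cannot produce $[b,T]_i$ applied to product test functions here: in Theorem \ref{main1} this device works only because $1/K$ is assumed to admit an absolutely convergent Fourier expansion $\sum_l a_l e^{\mathrm{i}v_l\cdot(y_1,\dots,y_m)}$, which writes $1/K$ as a sum of functions that are \emph{separable} in $y_1,\dots,y_m$ and can therefore be absorbed into the individual arguments $g_l^k(y_k)$. Under hypotheses (i)--(ii) no such expansion is available (that is the entire point of this theorem), so the factor $K(x-y_1,\dots,x-y_m)^{-1}$ cannot be distributed among the $f_k$'s, and there is no analogue of ``$\sum_l|a_l|$ replaced by an $O(1)$ constant.'' The paper's proof is structurally different: it chooses a sign-adapted test function $\phi_1$ (with $b\phi_1\ge 0$ and weighted mean zero) and $\psi_k=\mu_k\chi_{Q_{k,l}}$ supported on the translated cubes $Q_{k,l}=Q-\sqrt[n]{|Q|}h_l^k$, uses only the two-sided bound (i) on the cone to derive the pointwise lower bound $B\lesssim|[b,T]_1(\phi_1,\dots,\phi_m)(x)|+|[b,T]_1(\psi_1,\dots,\psi_m)(x)|$ on $Q_l$, where $B$ is the weighted mean oscillation of $b$; hypothesis (ii) enters solely to show that the error term $\Xi\sim|h_l|^{mn}\|T(\phi_1,\dots,\phi_m)\|_{L^\infty(Q_l)}$ vanishes as $l\to\infty$, thanks to the cancellation built into $\phi_1$. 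The weak-type hypothesis is then applied at the single level $\lambda\sim B$ to conclude $B\lesssim 1$. You would need to adopt this lower-bound strategy rather than an exact reproducing identity.

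A second gap: your scheme never establishes $(3)\Rightarrow(1)$. The claim that $(3)\Rightarrow(4)$ ``follows from a multilinear interpolation/extrapolation argument'' is unsubstantiated and implausible: the endpoint weak $L\log L$ estimate at $p_1=\cdots=p_m=1$ does not follow by interpolation from weak-type bounds with all $p_i>1$, and your fallback (deriving $(4)$ directly from $(1)$) leaves statement (3) dangling, with only $(1)\Rightarrow(2)\Rightarrow(3)$ and no return arrow. The paper proves $(3)\Rightarrow(1)$ separately with the same test functions, now taking $\mu_k=\omega_k^{1-p'_k}$, using $\|\phi_k\|_{L^{p_k}(\omega_k)}\lesssim\mu_k(Q_l)^{1/p_k}$ and the H\"older inequality $|Q_l|^m\le\nu_{\vec\omega}(Q_l)^{1/p}\prod_{k=1}^m\mu_k(Q_l)^{1/p'_k}$ to close the estimate; the remaining parts of your proposal (the $A_1$ manipulations giving $\|g_l^k\|_{L^\infty}\lesssim 1$ and the use of Lemma \ref{BMO-lem2}) do match the paper's endpoint computation once the correct test functions are in place.
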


\begin{proof}
Since the implications $(1) \Longrightarrow(2)\Longrightarrow (3)$ and $(1)\Longrightarrow(4)$ follow readily, we
only have to prove $(3) \Longrightarrow(1)$ and $(4)\Longrightarrow(1)$. Without loss of generality, we only deal with the kernel is positive and the case $i=1$. Choose a constant $\tau_0$ and a nonempty open cone
$\Gamma\subset \widetilde{\Gamma}\subset \mathbb{R}^{mn}$ with vertex at the origin,
such that for any $u\in Q_0^m:=[-1,1]^n\times\cdots\times [-1,1]^{n}$, $v\in \Gamma_{\tau_0}:=\Gamma\cap \mathbb{B}^c(0,\tau_0)$, we have $u+v\in \widetilde{\Gamma}$.

We first deal with the case $1<p_{1},\cdots,p_{m}<\infty$. Let $\mu_{k}=\omega_{k}^{1-p'_{k}}\in A_{\infty}, k=1,\cdots,m$. We can find a sequence $\{h_l=(h_l^1,\cdots,h_l^m)\}_{l=1}^{\infty}$ satisfying that $h_l\in \Gamma_{\tau_0}$, and $|h_l|\rightarrow \infty$
as $l\rightarrow \infty$. For any cubes $Q_{l}:=Q(a_{Q}-\rho h^{1}_{l},\rho)$ and $Q_{k,l}=a_{Q}+\rho h^{1}_{l}-\rho h^{k}_{l}$ with $k=1,\cdots,m,$ we have
\begin{equation*}
  \begin{aligned}
 |h_l|^{mn}\left\|\int_{Q_{1,l}\times\cdots\times Q_{m,l}}
  |K(\cdot,y_{1},\cdots,y_{m})-K(\cdot,y_{1},\cdots,y_{m})\big|\prod_{k=1}^{m}\mu_{k}(y_{k})d\vec{y}\right\|_{L^{\infty}(Q_{l})}\rightarrow 0
  \end{aligned}
\end{equation*}
as $l\rightarrow \infty$, uniformly for all cubes $Q$.

For a fixed cube $Q(a_{Q},\rho)=:Q_{1,l}$, we may assume that
\begin{equation*}
  \int_{Q_{1,l}}b(y_{1})\mu_{1}(y_{1})dy_{1}=0.
\end{equation*}
Otherwise, we use $b(y_{1})-b_{\mu_{1},Q_{1,l}}$ instead of $b(y_{1})$. Take
\begin{equation*}
    \left\{
   \begin{array}{ll}\vspace{1ex}
\phi_1(x)=\left(sgn(b(x))\mu_{1}(x)-\frac{1}{\mu_{1}(Q_{1,1})}\int_{Q_{1,l}}sgn(b(y))\mu_{1}(y)dy\right)\chi_{Q_{1,l}}(x),\\
\phi_j:=\mu_{j}\chi_{Q_{j,l}}, \qquad j=2,\cdots,m.
   \end{array}
 \right.
\end{equation*}
Then $-2\mu_{1}\chi_{Q_1}\leq \phi_1\leq 2\mu_{1}\chi_{Q_1}$, $b\phi_1\geqslant 0$.

For $x\in Q_{l}$, $y_{k}\in Q_{k,l}$ with $k=1,\cdots,m$, we have
 $$\frac{(x-y_{1},\cdots,x-y_{m})}{\rho}\in Q_0^m+h_l\subset Q_0^m+\Gamma_{\tau_0}\subset \widetilde{\Gamma}.$$
It follows that
\begin{equation}\label{kenerl}
  K(x,y_{1},\cdots,y_{m})\sim \frac{1}{(|x-y_{1}|+\cdots+|x-y_{m}|)^{mn}}\sim \frac{1}{(\rho|h_l|)^{mn}}.
\end{equation}

Recalling $b\phi_1\geqslant 0$, we obtain that
\begin{equation}\label{A1}
  \begin{aligned}
    |T(b\phi_1,\cdots,\phi_{m})(x)|
    &=
    \int_{ Q_{1,l}\times \cdots\times Q_{m,l}} K(x,y_{1},\cdots,y_{m})b(y_{1})\prod_{k=1}^{m}\phi_k(y_{k})d\vec{y}
    \\
    &\gtrsim
    \frac{\Pi_{k=2}^{m}\mu_{k}(Q_{k,l})}{(\rho|h_l|)^{mn}}\int_{Q_1} b(y_{1})\mu_1(y_{1})dy_{1}
    =
    \frac{\Pi_{k=1}^{m}\mu_{k}(Q_{k,l})}{(\rho|h_l|)^{mn}}B,
  \end{aligned}
\end{equation}
where
$$B:=\frac{1}{\mu_{1}(Q_{1,l})}\int_{Q_{1,l}}|b(y)|\mu_{1}(y)dy.$$
By the direct calculation, we arrive at
\begin{equation}\label{A2}
  \begin{aligned}
  \big|b(x)T(\phi_1,\cdots,\phi_m)(x)\big|
    &\lesssim
   |b(x)|\|T(\phi_1,\cdots,\phi_m)\|_{L^{\infty}(Q_{l})},\quad\forall\, x\in Q_{l}.
  \end{aligned}
\end{equation}
The combination of (\ref{A1}) and (\ref{A2}) then yields that
\begin{equation}\label{A1A2}
    \begin{aligned}
  &\big|[b,T]_1(\phi_1,\cdots,\phi_m)(x)\big|
  \geqslant
  A_1\frac{\Pi_{k=1}^{m}\mu_{k}(Q_{k,l})B}{(\rho|h_l|)^{mn}}
  -A_2|b(x)|\|T(\phi_1,\cdots,\phi_m)\|_{L^{\infty}(Q_{l})}.
    \end{aligned}
\end{equation}
On the other hand, take $$ \psi_k=\mu_{k}\chi_{Q_{k,l}},k=1,\cdots,m.$$
We have
\begin{equation}\label{A3A4}
  \begin{aligned}
    &\big|[b,T]_1(\psi_1,\cdots,\psi_m)(x)\big|\\
    &\geqslant
    \big|[b,T]_1(\psi_1,\cdots,\psi_m)(x)\chi_{Q_{l}}(x)\big| \\
    &\geqslant
    \big|b(x)T(\psi_1,\cdots,\psi_m)(x)\chi_{Q_{l}}(x)\big|-\big|T(b\psi_1,\psi_2)(x)\chi_{Q_{l}}(x)\big|.
  \end{aligned}
\end{equation}
Recalling the lower and upper bound of keneral $K$ in (\ref{kenerl}),
\begin{equation}\label{A3}
  \begin{aligned}
   & |T(b\psi_1,\cdots,\psi_m)(x)|\\
   & \leqslant
    \int_{Q_{1,l}\times \cdots\times Q_{m,l}} |K(x,y_{1},\cdots,y_{m})||b(y_{1})|\prod_{k=1}^{m}\psi(y_{k})d\vec{y}
    \\
   & \lesssim
    \frac{\Pi_{k=2}^{m}\mu_{k}(Q_{k,l})}{(\rho|h_l|)^{mn}}\int_{Q_{1,l}} |b(y_{1})|\mu(y_{1})dy_{1}
    =
    \frac{\Pi_{k=1}^{m}\mu_{k}(Q_{k,l})B}{(\rho|h_l|)^{mn}},\quad\forall\, x\in Q_{l}.
  \end{aligned}
\end{equation}
Also, for $x\in Q_{l}$,
\begin{equation}\label{A4}
  \begin{aligned}
    |b(x)T(\psi_1,\cdots,\psi_m)(x)|
    =&
    \left|b(x)\int_{Q_{1,l}\times\cdots\times Q_{m,l}}K(x,y_{1},\cdots,y_{m})d\vec{y}\right|
    \\
    \gtrsim &
    \frac{|b(x)|\cdot \rho^{mn}}{(\rho|h_l|)^{mn}}=\frac{|b(x)|}{|h_l|^{mn}}.
  \end{aligned}
\end{equation}
The combination of (\ref{A3A4}), (\ref{A3}) and (\ref{A4}) yields that
\begin{equation}\label{A3A4,2}
  \big|[b,T]_1(\psi_1,\cdots,\psi_m)(X)\big|
  \geqslant
   A_{3}\frac{|b(x)|}{|h_l|^{mn}}-A_{4}\frac{\Pi_{k=1}^{m}\mu_{k}(Q_{k,l})B}{(\rho|h_l|)^{mn}}.
\end{equation}
Denote
\begin{equation*}
  \Xi =\frac{A_{2}|h_l|^{mn}\|T(\phi_1,\cdots,\phi_m)\|_{L^{\infty}(Q_{l})}}{A_{3}}.
\end{equation*}
Using (\ref{A1A2}),(\ref{A3A4,2}), and the boundedness of $[b,T]_{1}$, we obtain that
\begin{equation}\label{Xi}
  \begin{aligned}
   & (1+\Xi)\big|[b,T]_1(\phi_{1},\cdots,\phi_{m})(x)\big|+\big|[b,T]_1(\psi_{1},\cdots,\psi_{m})(x)\big|
    \\
   & \geqslant
    (A_1-\Xi A_4)\frac{\Pi_{k=1}^{m}\mu_{k}(Q_{k,l})B}{(\rho|h_l|)^{mn}}.
  \end{aligned}
\end{equation}

Now we check that $\Xi$ can be chosen small for sufficient large $l$.
For $x\in Q_{l}$, it follows from the definitions of $\phi_{i}$ that
\begin{equation*}
  \begin{aligned}
  &  |T(\phi_1,\cdots,\phi_m)(x)|
    =
    \Big|\int_{Q_{1,l}\times\cdots \times Q_{m,l}}K(x,y_{1},\cdots,y_{m})\prod_{i=k}^{m}\phi_1(y_{1})d\vec{y}\Big|
    \\
   & \leqslant
    2\int_{ Q_{1,l}\times\cdots \times Q_{m,l}}\left|K(x,y_{1},\cdots,y_{m})-K(x,a_{Q},y_{2},\cdots,y_{m})\right|\prod_{k=1}^{m}\mu_{i}(y_{i})d\vec{y}.
  \end{aligned}
\end{equation*}
Then,
\begin{equation*}
  \begin{aligned}
    &\|T(\phi_1,\cdots,\phi_m)\|_{L^{\infty}(Q_{l})}\\
    &\lesssim
    \left\|\int_{Q_{1,l}\times\cdots \times Q_{m,l}}\left|K(x,y_{1},\cdots,y_{m})-K(x,a_{Q},y_{2},\cdots,y_{m})\right|\prod_{k=1}^{m}\mu_{k}(y_{k})d\vec{y}\right\|_{L^{\infty}(Q_{l})}
  \end{aligned}
\end{equation*}
This shows that $ \Xi \rightarrow 0$ as $l \rightarrow \infty$. We can make $\Xi\leqslant \min\{A_1/(2A_4),1\}$ and $A_1-\Xi A_4\geqslant \frac{A_1}{2}$, \eqref{Xi} implies that
\begin{equation*}
 \frac{\Pi_{k=1}^{m}\mu_{k}(Q_{l})}{|Q_{l}|^{m}}B\leq A_{5} \Big(\big|[b,T]_1(\phi_{1},\cdots,\phi_{m})(x)\big|+\big|[b,T]_1(\psi_{1},\cdots,\psi_{m})(x)\big|\Big).
\end{equation*}
Recall the side length of $Q_{k,1}$ and $Q_{l}$ are $\rho$.
So, there exists $s$ depend only on $h_{l}$,
such that $Q_{k,l}\subset s Q_{l}$, $k=1,\cdots,m$.
We have now completed this proof.

Taking $\lambda=\frac{\Pi_{k=1}^{m}\mu_{k}(Q_{l})}{|Q_{l}|^{m}}\frac{A_{5}^{-1}B}{4}$, we get
  \begin{equation*}
  \begin{aligned}
 \nu_{\vec{\omega}}(Q_{l})= &  \nu_{\vec{\omega}}(\{x\in Q_{l}: \frac{\Pi_{k=1}^{m}\mu_{k}(Q_{l})}{|Q_{l}|^{m}}A_{5}^{-1}B>2\lambda\})\\
     \leqslant  &  \nu_{\vec{\omega}}(\{x\in Q_{l}: |[b,T]_1(\phi_1,\cdots,\phi_m)(x)|+|[b,T]_1(\psi_1,\cdots,\psi_m)(x)|>2\lambda\})
    \\
  \leqslant &
    \nu_{\vec{\omega}}(\{x\in Q_{l}: |[b,T]_1(\phi_1,\cdots,\phi_m)(x)|>\lambda\})\\
   & +
    \nu_{\vec{\omega}}(\{x\in Q_{l}: |[b,T]_1(\psi_1,\cdots,\psi_m)(x)|>\lambda\}).
  \end{aligned}
  \end{equation*}
By the boundedness of $[b,T]_{1}$ from $L^{p_{1}}(\omega_{1})\times \cdots\times L^{p_{m}}(\omega_{m})$ to $L^{p,\infty}(\nu_{\vec{\omega}})$, we deduce that
  \begin{equation*}
    \begin{aligned}
     & \lambda\nu_{\vec{\omega}}(\{x\in Q_{l}: |[b,T]_1(\phi_1,\phi_2)(x)|>\lambda\})^{1/p}
     \lesssim
      \prod_{k=1}^{m}\|\phi_{k}\|_{L^{p_{k}}(\omega_{k})}
  \lesssim \prod_{k=1}^{m} \mu_{k}(Q_{l})^{1/p_{k}},
    \end{aligned}
  \end{equation*}
sine $Q_{k,l}\subset s Q_{l}$ and the doubling property of $\mu_k$. Similarly,
  \begin{equation*}
     \lambda\nu_{\vec{\omega}}(\{x\in Q_{l}: |[b,T]_1(\psi_1,\cdots,\psi_m)(x)|>\lambda\})^{1/p}
      \lesssim
      \prod_{k=1}^{m} \mu_{k}(Q_{l})^{1/p_{k}}.
  \end{equation*}
Combining with the above estimates, we conclude that
\begin{equation*}
  \begin{aligned}
    B&\lesssim \frac{\lambda|Q_{l}|^{m}}{\Pi_{k=1}^{m}\mu_{k}(Q_{l})}
    \lesssim \prod_{k=1}^{m} \mu_{k}(Q_{l})^{-1/p'_{k}} \nu_{\vec{\omega}}(Q_{l})^{-1/p}|Q_{l}|^m.
  \end{aligned}
\end{equation*}
We use the fact
\begin{equation*}
  \begin{aligned}
|Q_{l}|^{m}&=\Big(\int_{Q_{l}}\nu_{\vec{\omega}}(x)^{\frac{1}{mp}}\prod_{k=1}^{m}\omega_{k}(x)^{-\frac{1}{mp_{k}}}dx\Big)^{m}\leq \nu_{\vec{\omega}}(Q_{l})^{1/p}\prod_{k=1}^{m}\mu_{k}(Q_{l})^{1/p'_{k}}
  \end{aligned}
\end{equation*}
to deduce that $B\lesssim 1$. Then, we have now completed the proof of $(3)\Longrightarrow (1)$.

Next, we deal with the endpoint case. Let $\mu_{k}:=\omega_{k}^{-1}, k=1,\cdots,m,$ and
\begin{equation*}
    \left\{
   \begin{array}{ll}\vspace{1ex}
\tilde{\phi}_1(x)=\left(sgn(b(x))\mu_{1}(x)-\frac{1}{\mu_{1}(Q_{1,1})}\int_{Q_{1,l}}sgn(b(y))\mu_{1}(y)dy\right)\Big(\frac{\omega_{1}^{1/m}(Q_{1,l})}{|Q_{1,l}|}\Big)^{m}\chi_{Q_1}(x),\\
\tilde{\phi}_j(x):=\Big(\frac{\omega_{j}^{1/m}(Q_{j,l})}{|Q_{j,l}|}\Big)^{m}\mu_{j}(x)\chi_{Q_{j,l}}(x), \qquad \qquad j=2,\cdots,m,\\
\tilde{\psi}_{k}(x):=\Big(\frac{\omega_{k}^{1/m}(Q_{k,l})}{|Q_{k,l}|}\Big)^{m}\mu_{k}(x)\chi_{Q_{k,l}}(x), \qquad \qquad k=1,\cdots,m.
   \end{array}
 \right.
\end{equation*}
Then $\|\tilde{\phi}_{k}\|_{L^{\infty}}, \|\tilde{\psi}_{k}\|_{L^{\infty}}\lesssim 1$ and $\|\tilde{\phi}_{k}\|_{L^{1}(\omega_{k})}=\|\tilde{\psi}_{k}\|_{L^{1}(\omega_{k})}=|Q_{l}|, k=1,\cdots,m.$
The same arguments as in the proof above, we obtain
\begin{equation*}
B\leq A_{6}\Big( \big|[b,T]_1(\tilde{\phi}_{1},\cdots,\tilde{\phi}_{m})(x)\big|+\big|[b,T]_1(\tilde{\psi}_{1},\cdots,\tilde{\psi}_{m})(x)\big|\Big).
\end{equation*}
Takeing
$$\lambda^{m}=\frac{B}{4A_{6}}.$$
From the fact that $\|\tilde{\phi}_{k}\|_{L^{\infty}}, \|\tilde{\psi}_{k}\|_{L^{\infty}}\lesssim 1$, it is easy to obtain the desired result for
$$0<\lambda\leq \max\{\|\tilde{\phi}_{1}\|_{L^{\infty}},\cdots, \|\tilde{\phi}_{m}\|_{L^{\infty}},\|\tilde{\psi}_{1}\|_{L^{\infty}},\cdots, \|\tilde{\psi}_{m}\|_{L^{\infty}}\}.$$
It remains to dealt with the case $\lambda>\max\{\|\tilde{\phi}_{1}\|_{L^{\infty}},\cdots, \|\tilde{\phi}_{m}\|_{L^{\infty}},\|\tilde{\psi}_{1}\|_{L^{\infty}},\cdots, \|\tilde{\psi}_{m}\|_{L^{\infty}}\}$. Since
  \begin{equation*}
    \begin{aligned}
      &\nu_{\vec{\omega}}(\{x\in Q_{l}: |[b,T]_1(\tilde{\phi}_1,\cdots,\tilde{\phi}_m)(x)|>\lambda^m\})\\
      &\lesssim
   \prod_{k=1}^{m} \left(\int_{Q_{k,l}}\Phi(\frac{\tilde{\phi}_{k}}{\lambda})\omega_k(y)dy\right)^{1/m}
    \lesssim \frac{\prod_{k=1}^{m}\omega_{k}^{1/m}(Q_{l})}{|Q_{l}|^{m-1}\lambda}.
    \end{aligned}
  \end{equation*}
  Similarly,
  \begin{equation*}
      \nu_{\vec{\omega}}(\{x\in Q_{l}: |[b,T]_1(\tilde{\psi}_1,\cdots,\tilde{\psi}_m)(x)|>\lambda^m\})
      \lesssim \frac{\prod_{k=1}^{m}\omega_{k}^{1/m}(Q_{l})}{|Q_{l}|^{m-1}\lambda}.
  \end{equation*}
Combining with the above estimates, we conclude that
  \begin{equation*}
  \begin{aligned}
  \nu_{\vec{\omega}}(Q_{l})=& \nu_{\vec{\omega}}(\{x\in Q_{l}: A_{6}^{-1}B>2\lambda^{m}\})\\
   \leqslant& \nu_{\vec{\omega}}(\{x\in Q_{l}: |[b,T]_1(\tilde{\phi}_1,\cdots,\tilde{\phi}_m)(x)|+|[b,T]_1(\tilde{\psi}_1,\cdots,\tilde{\psi}_m)(x)|>2\lambda^{m}\})
    \\
  \leqslant&
    \nu_{\vec{\omega}}(\{x\in Q_{l}: |[b,T]_1(\tilde{\phi}_1,\cdots,\tilde{\phi}_m)(x)|>\lambda^{m}\})\\
   & +
    \nu_{\vec{\omega}}(\{x\in Q_{l}: |[b,T]_1(\tilde{\psi}_1,\cdots,\tilde{\psi}_m)(x)|>\lambda^{m}\})\\
   \lesssim &\frac{\prod_{k=1}^{m}\omega_{k}^{1/m}(Q_{l})}{|Q_{l}|^{m-1}\lambda}.
  \end{aligned}
  \end{equation*}
Then $B=4A_{6}\lambda^{m}\lesssim 1$ and the proof of Theorem \ref{GC2} is completed.
\end{proof}

\subsection{The linear commutator of multilinear Calder\'{o}n-Zygmund operators}\label{the linear commutators}

It is very interesting to explore the necessity of bounded linear commutators, since the boundedness of the linear commutators $[\Sigma \vec{b},T]$ can not deduce that each term $[b_{i},T]_{i}$ is a bounded operator.

\medskip

{\bf Theorem D} (cf. \cite{WZTadm})\quad Suppose that $K$ is a homogeneous function of degree $-mn$, and there exists a ball $\mathbb{B}\subset \mathbb{R}^{mn}$ such that $1/K$ can be expended to a Fourier series in $\mathbb{B}$. If $1<p_1,\cdots,p_m,{p}<\infty$, $\frac 1p=\frac 1{p_1}+\cdots+\frac 1{p_m}$,
$$b_{1},\cdots,b_{m}\in BMO\,\Longleftrightarrow\, [\Sigma\vec{b},T]:\, L^{p_1}\times\cdots\times L^{p_m}\to L^p.$$

\medskip
Applying the methods in \cite{WZTadm}, we can obtain the weighted result as Theorem D when $\vec{\omega}\in A_{\vec{P}}$ with $\omega_{k}\in A_{p_{k}}, k=1,\cdots,m$. In order to avoid cumbersome procedure, we omit the details. We try to get the related results with the condition $\vec{\omega}\in A_{\vec{P}}$ only. Unfortunately, we only obtain the case that $b_{1}=\cdots=b_{m}$ and $\frac{1}{m}\leq p\leq \frac{1}{m-1}$. However, the index $p\in [\frac{1}{m},\frac{1}{m-1}]$ is sharp in some sense (see Appendix \ref{A}).

To obtain the desired result, we need the following lemmas.

\begin{lemma}\label{weights-lem2}
Let $1< p_{1},\cdots,p_{m}<\infty$ with $m-1\leq \frac{1}{p}=\frac{1}{p_{1}}+\cdots+\frac{1}{p_{m}}<m$ and $\vec{\omega}=(\omega_{1},\cdots,\omega_{m})\in A_{\vec{P}}$. Then for any cube $Q$ and $k=1,\cdots, m$, $\nu_{\vec{\omega}}^{\frac{1}{1-mp}}\chi_{Q}\in L^{p_{k}}(\omega_{k})$ and
$$\prod_{k=1}^{m}\|\nu_{\vec{\omega}}^{\frac{1}{1-mp}}\chi_{Q}\|_{L^{p_{k}}(\omega_{k})}\lesssim |Q|^{\frac{m}{mp-1}}\nu_{\vec{\omega}}(Q)^{\frac{1}{p(1-mp)}}.$$
\end{lemma}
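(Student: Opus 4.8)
The plan is to prove the quantitative bound by working directly with the defining inequality of the class $A_{\vec P}$ and the characterization from Lemma \ref{weights-lem1}. First I would record the elementary algebraic identity that the exponent $\frac{1}{1-mp}$ is negative (since $m-1\le 1/p$ forces $mp\le \frac{m}{m-1}$, but more simply $p>0$ gives $1-mp<1$, and $mp>1$ by $1/p<m$), so that $\nu_{\vec\omega}^{1/(1-mp)}$ is a genuine weight and the integrals in question are of a power of $\nu_{\vec\omega}$ against $\omega_k\,dx$. The key observation is that $\omega_k = \nu_{\vec\omega}^{p/p_k'}\cdot\big(\prod_{j}\omega_j^{1/p_j}\big)^{\text{(correction)}}$ type manipulations are awkward; instead I would convert everything to the variables $\sigma_k := \omega_k^{1-p_k'}$ and $\nu_{\vec\omega}$, for which Lemma \ref{weights-lem1} asserts $\sigma_k\in A_{mp_k'}$ and $\nu_{\vec\omega}\in A_{mp}$. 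The relation $\omega_k = \sigma_k^{1-p_k}$ lets me rewrite $\int_Q \nu_{\vec\omega}^{p_k/(1-mp)}\omega_k\,dx$ as an integral involving only $\sigma_k$ and $\nu_{\vec\omega}$.

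Next I would estimate each factor $\|\nu_{\vec\omega}^{1/(1-mp)}\chi_Q\|_{L^{p_k}(\omega_k)}$ separately. Writing $\alpha = \frac{1}{1-mp}<0$, I have $\|\nu_{\vec\omega}^{\alpha}\chi_Q\|_{L^{p_k}(\omega_k)}^{p_k} = \int_Q \nu_{\vec\omega}^{\alpha p_k}\,\omega_k\,dx$. The exponents are engineered so that $\alpha p_k = \frac{p_k}{1-mp}$ and, crucially, $\alpha p = \frac{p}{1-mp} = \frac{1}{1/p - m}\cdot\frac{1}{-1}$, which is the exponent $1 - (mp)'$ appearing in the $A_{mp}$ condition for $\nu_{\vec\omega}$: indeed $1-(mp)' = 1 - \frac{mp}{mp-1} = \frac{-1}{mp-1} = \frac{p}{1-mp}/p \cdot p = \alpha p$. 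So $\prod_k \|\nu_{\vec\omega}^{\alpha}\chi_Q\|_{L^{p_k}(\omega_k)}$, after taking the product and using $\sum 1/p_k = 1/p$, should collapse via Hölder's inequality (with exponents $p/p_k$) into a single integral $\big(\int_Q \nu_{\vec\omega}^{\alpha p}\,\nu_{\vec\omega}\,dx\big)^{1/p}$ — here I would need to check that $\prod_k \omega_k^{p/p_k}$ recombines to $\nu_{\vec\omega}$ by definition. Then $\int_Q \nu_{\vec\omega}^{1+\alpha p}dx = \int_Q \nu_{\vec\omega}^{1-(mp)'}dx$, and the $A_{mp}$ condition for $\nu_{\vec\omega}$ (or rather $A_\infty$ plus the standard reverse-type bound) gives $\int_Q \nu_{\vec\omega}^{1-(mp)'}dx \lesssim |Q|\big(\frac{|Q|}{\nu_{\vec\omega}(Q)}\big)^{(mp)'-1} = |Q|^{(mp)'}\nu_{\vec\omega}(Q)^{1-(mp)'}$. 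Unwinding $(mp)'$ in terms of $m,p$ should yield exactly the exponents $|Q|^{m/(mp-1)}\nu_{\vec\omega}(Q)^{1/(p(1-mp))}$ claimed.

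The step I expect to be the main obstacle is making the Hölder recombination rigorous: the naive application of Hölder with exponents $p/p_k$ to the product $\prod_k \int_Q \nu_{\vec\omega}^{\alpha p_k}\omega_k$ does not directly produce $\int_Q \nu_{\vec\omega}^{\alpha p}\prod_k\omega_k^{p/p_k}$ unless one is careful about which integrand each factor contributes. The correct route is to write $\prod_k \big(\int_Q \nu_{\vec\omega}^{\alpha p_k}\omega_k\big)^{p/(p_k \cdot p)}$ — i.e. raise the $k$-th factor to the power $\frac{p}{p_k}$ which sums to $1$ — and apply the generalized Hölder inequality $\prod_k \|F_k\|_{L^{1}}^{\theta_k} \ge$ ... no: rather use $\int \prod_k F_k^{\theta_k} \le \prod_k (\int F_k)^{\theta_k}$ with $\sum\theta_k=1$. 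Applying this with $F_k = \nu_{\vec\omega}^{\alpha p_k}\omega_k$ and $\theta_k = p/p_k$ gives $\int_Q \prod_k \nu_{\vec\omega}^{\alpha p}\omega_k^{p/p_k}dx \le \prod_k\big(\int_Q\nu_{\vec\omega}^{\alpha p_k}\omega_k\big)^{p/p_k}$, i.e. $\int_Q \nu_{\vec\omega}^{\alpha p +1}dx \le \prod_k \|\nu_{\vec\omega}^\alpha\chi_Q\|_{L^{p_k}(\omega_k)}^{p}$ — which is the \emph{reverse} of what I want. Hence I instead need the \emph{lower} bound direction, which comes from a direct Hölder splitting inside each $\int_Q \nu_{\vec\omega}^{\alpha p_k}\omega_k\,dx$: write the integrand using the identity $1 = \nu_{\vec\omega}^{?}\prod_j \omega_j^{?}$ and split so each factor reconstructs the desired pieces. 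Concretely, I would follow the model computation already displayed in the proof of Theorem \ref{GC2} (the chain $|Q_l|^m = (\int_{Q_l}\nu_{\vec\omega}^{1/(mp)}\prod\omega_k^{-1/(mp_k)})^m \le \nu_{\vec\omega}(Q_l)^{1/p}\prod\mu_k(Q_l)^{1/p_k'}$), which is exactly the same kind of Hölder-recombination identity, and combine it with the $A_{\vec P}$ bound applied to the cube $Q$ to absorb the remaining factor of $\nu_{\vec\omega}(Q)^{1/p}\prod(\omega_k^{1-p_k'}(Q))^{1/p_k'}$. Getting the bookkeeping of exponents to land on $|Q|^{m/(mp-1)}\nu_{\vec\omega}(Q)^{1/(p(1-mp))}$ is routine once the right Hölder split is identified, so the real work is choosing that split correctly.
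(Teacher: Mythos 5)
Your final plan --- split the integrand $\nu_{\vec\omega}^{p_k/(1-mp)}\omega_k$ pointwise by H\"older's inequality into powers of the $\omega_j^{1-p_j'}$ and then invoke the $A_{\vec P}$ condition --- is exactly the route the paper takes, and your diagnosis that the ``collapse the product over $k$'' idea runs in the wrong direction is correct. But the proposal stops precisely where the proof begins: you never exhibit the H\"older system, and that system is the entire content of the lemma. The paper writes, pointwise on $Q$,
\begin{equation*}
\nu_{\vec\omega}(x)^{\frac{p_k}{1-mp}}\,\omega_k(x)\;=\;\prod_{j=1}^m\bigl(\omega_j(x)^{1-p_j'}\bigr)^{1/s_{k,j}},\qquad
s_{k,k}=\frac{(mp-1)p_k'}{(p+1-mp)p_k},\quad s_{k,j}=\frac{(mp-1)p_j'}{p_k p}\ (j\neq k),
\end{equation*}
checks that $\sum_{j}1/s_{k,j}=1$ with all reciprocals nonnegative, applies H\"older on $Q$, and then observes that the product over $k$ telescopes via $\sum_k \frac{1}{p_k s_{k,j}}=\frac{1}{p_j'(mp-1)}$, so that the $A_{\vec P}$ inequality $\prod_j\bigl(\int_Q\omega_j^{1-p_j'}\bigr)^{1/p_j'}\lesssim |Q|^{m}\nu_{\vec\omega}(Q)^{-1/p}$ delivers the stated bound after raising to the power $\frac{1}{mp-1}$. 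The admissibility of this system --- in particular $s_{k,k}>0$, equivalently $p+1-mp\geq 0$ --- is exactly where the hypothesis $m-1\leq 1/p$ enters, and the conclusion genuinely fails when $1/p<m-1$ (Proposition \ref{lem2} in the Appendix). So ``choosing that split correctly'' is not routine bookkeeping to be deferred; it is the proof, and without it your argument is incomplete. Nothing in your write-up indicates where $m-1\le 1/p$ would be used, which is the telltale sign of the gap.

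Two smaller computational errors in the abandoned first route are worth flagging so you do not reuse them: the identity $\alpha p=1-(mp)'$ is false for $m\geq 2$, since $1-(mp)'=\frac{1}{1-mp}$ while $\alpha p=\frac{p}{1-mp}$; and $\prod_k\bigl(\nu_{\vec\omega}^{\alpha p_k}\bigr)^{p/p_k}=\nu_{\vec\omega}^{m\alpha p}$, not $\nu_{\vec\omega}^{\alpha p}$, so even the reverse inequality you derive there does not have the exponents you claim.
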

\begin{proof}
{\bf Case 1. $\frac{1}{p}> m-1$.} It is easy to see that $mp-1>0$ and $p+1-mp>0$. Let
$$\quad s_{k,k}=\frac{(mp-1)p'_{k}}{(p+1-mp)p_{k}}>0 \qquad \text{and} \qquad s_{k,j}=\frac{(mp-1)p'_{j}}{p_{k}p}>0,\quad  j\neq k.$$
Then $\frac{1}{s_{k,1}}+\cdots+\frac{1}{s_{k,m}}=1$, it follows that $s_{k,j}>1$ for any $j=1,\cdots,m$, and
$$\|\nu_{\vec{\omega}}^{\frac{1}{1-mp}}\chi_{Q}\|_{L^{p_{k}}(\omega_{k})}\leq \prod_{j=1}^{m}\bigg(\int_{Q}\omega_{j}^{1-p'_{j}}(x)dx\bigg)^{\frac{1}{p_{i}s_{k,j}}}.$$
Therefore,
\begin{align*}
\prod_{k=1}^{m}\|\nu_{\vec{\omega}}^{\frac{1}{1-mp}}\chi_{Q}\|_{L^{p_{k}}(\omega_{k})}&\leq \prod_{j=1}^{m}\bigg(\int_{Q}\omega_{j}^{1-p'_{j}}(x)dx\bigg)^{\frac{1}{p'_{j}(mp-1)}}\lesssim |Q|^{\frac{m}{mp-1}}\nu_{\vec{\omega}}(Q)^{\frac{1}{p(1-mp)}}.
\end{align*}
{\bf Case 2. $\frac{1}{p}= m-1$.} Let
$$ s_{k,j}=\frac{(mp-1)p'_{j}}{p_{k}p}>0,\quad  j\neq k.$$
Note that
$$\frac{1}{s_{k,1}}+\cdots+\frac{1}{s_{k,k-1}}+\frac{1}{s_{k,k+1}}+\cdots+\frac{1}{s_{k,m}}=1,$$
it follows that $s_{k,j}>1$ for any $j=1,\cdots,m$, and
$$\|\nu_{\vec{\omega}}^{\frac{1}{1-mp}}\chi_{Q}\|_{L^{p_{k}}(\omega_{k})}\leq \prod_{j\neq k}\bigg(\int_{Q}\omega_{j}^{1-p'_{j}}(x)dx\bigg)^{\frac{1}{p_{i}s_{k,j}}}.$$
Therefore,
\begin{align*}
\prod_{k=1}^{m}\|\nu_{\vec{\omega}}^{\frac{1}{1-mp}}\chi_{Q}\|_{L^{p_{k}}(\omega_{k})}&\leq \prod_{j=1}^{m}\bigg(\int_{Q}\omega_{j}^{1-p'_{j}}(x)dx\bigg)^{\frac{1}{p'_{j}(mp-1)}}\lesssim |Q|^{\frac{m}{mp-1}}\nu_{\vec{\omega}}(Q)^{\frac{1}{p(1-mp)}}.
\end{align*}
Thus we complete the proof of Lemma \ref{weights-lem2}.
\end{proof}

For the endpoint case $p_{1}=\cdots=p_{m}=1$, we have $p=1/m$ and
\begin{lemma}\label{weights-lem3}
Let $\vec{\omega}=(\omega_{1},\cdots,\omega_{m})\in A_{(1,\cdots,1)}$. Then for any cube $Q$, we have $\nu_{\vec{\omega}}^{-m}\chi_{Q}\in L^{1}(\omega_{k}), k=1,\cdots, m,$ and
$$\prod_{k=1}^{m}\|\nu_{\vec{\omega}}^{-m}\chi_{Q}\|_{L^{1}(\omega_{k})}\lesssim |Q|^{m}\Big(\frac{|Q|}{\nu_{\vec{\omega}}(Q)}\Big)^{m(m-1)}.$$
\end{lemma}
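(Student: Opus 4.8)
The plan is to reduce the whole statement to one pointwise lower bound on each $\omega_j$ over a cube, plus the genuinely multilinear $A_{(1,\dots,1)}$ estimate. Since $p_1=\dots=p_m=1$ gives $\nu_{\vec\omega}=\prod_{i=1}^m\omega_i^{1/m}$, hence $\nu_{\vec\omega}^{-m}=\prod_{i=1}^m\omega_i^{-1}$, the first step is to rewrite
$$\|\nu_{\vec\omega}^{-m}\chi_Q\|_{L^1(\omega_k)}=\int_Q\nu_{\vec\omega}(x)^{-m}\omega_k(x)\,dx=\int_Q\prod_{j\neq k}\omega_j(x)^{-1}\,dx .$$
By Lemma \ref{weights-lem1} (the case $p_i=1$), $\vec\omega\in A_{(1,\dots,1)}$ forces $\omega_j^{1/m}\in A_1$ for every $j$, so each $\omega_j$ is bounded below by a positive constant on $Q$; consequently $\prod_{j\neq k}\omega_j^{-1}$ is bounded on $Q$, which already yields $\nu_{\vec\omega}^{-m}\chi_Q\in L^1(\omega_k)$, and moreover
$$\int_Q\prod_{j\neq k}\omega_j^{-1}\,dx\le |Q|\prod_{j\neq k}\big(\inf_{Q}\omega_j\big)^{-1}=|Q|\,\big(\inf_Q\omega_k\big)\prod_{j=1}^m\big(\inf_Q\omega_j\big)^{-1}.$$

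Next I would insert two quantitative facts. First, the very definition of $A_{(1,\dots,1)}$ (with $1/p=m$) gives $\big(|Q|^{-1}\nu_{\vec\omega}(Q)\big)^{m}\prod_{j=1}^m(\inf_Q\omega_j)^{-1}\lesssim 1$, i.e. $\prod_{j=1}^m(\inf_Q\omega_j)^{-1}\lesssim\big(|Q|/\nu_{\vec\omega}(Q)\big)^{m}$. Second, since the essential infimum is at most the average, $\inf_Q\omega_k=\big(\inf_Q\omega_k^{1/m}\big)^m\le\big(|Q|^{-1}\omega_k^{1/m}(Q)\big)^m$. Combining the last three displays gives, for each $k$,
$$\|\nu_{\vec\omega}^{-m}\chi_Q\|_{L^1(\omega_k)}\lesssim|Q|\cdot\Big(\frac{\omega_k^{1/m}(Q)}{|Q|}\Big)^m\Big(\frac{|Q|}{\nu_{\vec\omega}(Q)}\Big)^m=|Q|\,\omega_k^{1/m}(Q)^m\,\nu_{\vec\omega}(Q)^{-m}.$$

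Finally I would multiply these $m$ estimates and feed in the companion inequality $\prod_{k=1}^m\omega_k^{1/m}(Q)\lesssim|Q|^{m-1}\nu_{\vec\omega}(Q)$, which is exactly the bound already established inside the proof of Theorem \ref{main1} from $\omega_k^{1/m}\in A_1$ (via $|Q|^{-1}\omega_k^{1/m}(Q)\lesssim\inf_Q\omega_k^{1/m}$ and $\prod_k\inf_Q\omega_k^{1/m}\le\inf_Q\nu_{\vec\omega}\le|Q|^{-1}\nu_{\vec\omega}(Q)$). This yields
$$\prod_{k=1}^m\|\nu_{\vec\omega}^{-m}\chi_Q\|_{L^1(\omega_k)}\lesssim|Q|^m\Big(\prod_{k=1}^m\omega_k^{1/m}(Q)\Big)^m\nu_{\vec\omega}(Q)^{-m^2}\lesssim|Q|^m\big(|Q|^{m-1}\nu_{\vec\omega}(Q)\big)^m\nu_{\vec\omega}(Q)^{-m^2},$$
and since $|Q|^m\cdot|Q|^{m(m-1)}=|Q|^{m^2}$ and $\nu_{\vec\omega}(Q)^{m}\cdot\nu_{\vec\omega}(Q)^{-m^2}=\nu_{\vec\omega}(Q)^{-m(m-1)}$, the right-hand side is $|Q|^m\big(|Q|/\nu_{\vec\omega}(Q)\big)^{m(m-1)}$, which is the claim. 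There is no real obstacle here beyond the bookkeeping; the one point that matters is that $\omega_k$ itself need \emph{not} be locally integrable under $A_{(1,\dots,1)}$, so one must bound $\omega_j^{-1}$ pointwise by $(\inf_Q\omega_j)^{-1}$ (which is automatically finite on every cube) rather than trying to extract a factor $\omega_k(Q)$, and the genuinely multilinear character of the hypothesis enters precisely through the displayed $A_{(1,\dots,1)}$ estimate and through $\prod_k\omega_k^{1/m}(Q)\lesssim|Q|^{m-1}\nu_{\vec\omega}(Q)$.
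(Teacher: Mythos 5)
Your argument is correct and rests on the same two ingredients as the paper's proof: the pointwise bound $\prod_{j\neq k}\omega_j^{-1}\le\prod_{j\neq k}(\inf_Q\omega_j)^{-1}$ on $Q$ and the $A_{(1,\dots,1)}$ condition $\prod_j(\inf_Q\omega_j)^{-1}\lesssim\big(|Q|/\nu_{\vec\omega}(Q)\big)^m$. The paper simply observes that in the product over $k$ each index $j$ occurs $m-1$ times, so it applies the $A_{(1,\dots,1)}$ bound raised to the power $m-1$ directly; your detour through $\inf_Q\omega_k\le\big(|Q|^{-1}\omega_k^{1/m}(Q)\big)^m$ and $\prod_k\omega_k^{1/m}(Q)\lesssim|Q|^{m-1}\nu_{\vec\omega}(Q)$ is valid but cancels back to the same estimate.
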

\begin{proof}
It is easy to see that
\begin{align*}
\|\nu_{\vec{\omega}}^{-m}\chi_{Q}\|_{L^{1}(\omega_{k})}&\leq |Q|\prod_{j\neq k}\|\omega_{j}^{-1}\chi_{Q}\|_{L^{\infty}}.
\end{align*}
Therefore,
\begin{align*}
\prod_{k=1}^{m}\|\nu_{\vec{\omega}}^{-m}\chi_{Q}\|_{L^{1}(\omega_{k})}&\leq |Q|^{m}\prod_{j=1}^{m}\|\omega_{j}^{-1}\chi_{Q}\|_{L^{\infty}}^{m-1}\lesssim |Q|^{m}\Big(\frac{|Q|}{\nu_{\vec{\omega}}(Q)}\Big)^{m(m-1)}.
\end{align*}
Thus we complete the proof of Lemma \ref{weights-lem3}.
\end{proof}

\begin{theorem}\label{LC}
Let $m\in \mathbb{N}$, $\vec{b}=(b,\cdots,b)$ and $T$ be a $m$-linear Calder\'{o}n-Zygmund operator with kernel $K$ and there exists a ball $\mathbb{B}\subset \mathbb{R}^{mn}$ such that $1/K$ can be expended to a Fourier series in $\mathbb{B}$. Then the following four statements are equivalent:
\begin{enumerate}
\item [\rm(1)] $b\in BMO$.
\item [\rm(2)] For $1<p_{1},\cdots,p_{m}<\infty$ with $m-1\leq 1/p=1/p_{1}+\cdots+1/p_{m}<m$ and $\vec{\omega}\in A_{\vec{P}}$,
$$[\Sigma\vec{b},T]: L^{p_{1}}(\omega_{1})\times\cdots\times L^{p_{m}}(\omega_{m})\rightarrow L^{p}(\nu_{\vec{\omega}}).$$
\item [\rm(3)] For $1<p_{1},\cdots,p_{m}<\infty$ with $m-1\leq 1/p=1/p_{1}+\cdots+1/p_{m}<m$ and $\vec{\omega}\in A_{\vec{P}}$,
$$[\Sigma\vec{b},T]: L^{p_{1}}(\omega_{1})\times\cdots\times L^{p_{m}}(\omega_{m})\rightarrow L^{p,\infty}(\nu_{\vec{\omega}}).$$
\item [\rm(4)] For $\vec{\omega}\in A_{1,\cdots,1}$ and $\Phi(t)=t(1+\log^{+}t)$,
$$\nu_{\vec{\omega}}\big(\{x\in \mathbb{R}^{n}:|[\Sigma\vec{b},T](f_{1},\cdots,f_{m})|>\lambda^{m}\}\big)\lesssim \prod_{i=1}^{m}\Big(\int_{\mathbb{R}^n}\Phi(\frac{|f(y)|}{\lambda})\omega_{i}(y)dy\Big)^{1/m}$$
for any $\lambda>0$.
\end{enumerate}
\end{theorem}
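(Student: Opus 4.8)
The plan is to establish the two non-trivial implications $(3)\Rightarrow(1)$ and $(4)\Rightarrow(1)$; the chain $(1)\Rightarrow(2)\Rightarrow(3)$ and $(1)\Rightarrow(4)$ is immediate from the weighted boundedness of linear commutators of multilinear Calder\'on--Zygmund operators (Lerner et al.\ in \cite{LOPTT2009}), the implication $(2)\Rightarrow(3)$ being trivial. Both hard implications will run on the localized Fourier-series machinery already used for Theorem \ref{main1}: fix a cube $Q=Q(x_0,r)$, use the absolutely convergent expansion $1/K=\sum_l a_l e^{\mathrm i v_l\cdot(\cdot)}$ on a ball $\mathbb B\subset\mathbb R^{mn}$ bounded away from the origin, and rescale and translate to produce an auxiliary cube $Q'=Q(\tilde z,r)$ with $Q\subset C_{m,n}Q'$ and $Q'\subset C_{m,n}Q$, so that $\mu(Q)\approx\mu(Q')$ for every $\mu\in A_\infty$; in particular $|Q|\approx|Q'|$ and $\nu_{\vec\omega}(Q)\approx\nu_{\vec\omega}(Q')$.

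The crux is the choice of test functions, which must be the \emph{same} in every slot. Put $\sigma:=\nu_{\vec\omega}^{1/(1-mp)}$ in the setting of (3) and $\sigma:=\nu_{\vec\omega}^{-m}$ in the setting of (4), let $s(x):=\overline{\operatorname{sgn}\big(b(x)-b_{\sigma,Q'}\big)}$, and set
\begin{equation*}
g^k_l(y_k)=e^{-\mathrm i\frac{\delta}{r}v^k_l\cdot y_k}\,\sigma(y_k)\chi_{Q'}(y_k)\,\frac{|Q'|}{\sigma(Q')}\ \ (k=1,\dots,m),\qquad h_l(x)=e^{\mathrm i\frac{\delta}{r}v_l\cdot(x,\dots,x)}\chi_Q(x)s(x).
\end{equation*}
Because $\vec b=(b,\dots,b)$, expanding $\delta^{-mn}\sum_l a_l[\Sigma\vec b,T](g^1_l,\dots,g^m_l)(x)h_l(x)$ the Fourier series reconstructs $1/K$ inside the integral, and the $i$-th summand $[b,T]_i$ contributes exactly $s(x)\big(b(x)-b_{\sigma,Q'}\big)\chi_Q(x)$ — the same quantity for every $i$, since every slot is integrated against the single measure $\sigma\,dx$ over $Q'$. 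Summing in $i$ and using $s(x)\big(b(x)-b_{\sigma,Q'}\big)=|b(x)-b_{\sigma,Q'}|$ yields the exact identity
\begin{equation*}
|b(x)-b_{\sigma,Q'}|\,\chi_Q(x)=\frac1m\,\delta^{-mn}\sum_l a_l\,[\Sigma\vec b,T](g^1_l,\dots,g^m_l)(x)\,h_l(x).
\end{equation*}
This collapse of the $m$ cross-averages into a single one is exactly what fails for general symbols $b_1,\dots,b_m$, and is why the theorem is stated with identical symbols.

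It remains to feed this identity into the hypothesis. As in the proof of Theorem \ref{main1}, using that $1/K$ has constant sign on the rescaled region one dominates $\sum_l|a_l|\,|[\Sigma\vec b,T](g^1_l,\dots,g^m_l)(x)|$ by $\big(\sum_l|a_l|\big)$ times a single fixed $[\Sigma\vec b,T]$-type expression in the moduli $|g^k_l|=\sigma\chi_{Q'}\,|Q'|/\sigma(Q')$, so the infinitely many modes are absorbed using $\sum_l|a_l|<\infty$. In case (3) one then applies the $L^{p,\infty}(\nu_{\vec\omega})$ bound and estimates $\prod_k\|g^k_l\|_{L^{p_k}(\omega_k)}=\frac{|Q'|^m}{\sigma(Q')^m}\prod_k\|\nu_{\vec\omega}^{1/(1-mp)}\chi_{Q'}\|_{L^{p_k}(\omega_k)}$ by Lemma \ref{weights-lem2}; combining this with the H\"older bound $\sigma(Q')\gtrsim|Q'|^{mp/(mp-1)}\nu_{\vec\omega}(Q')^{-1/(mp-1)}$ and with the doubling $|Q'|\approx|Q|$, $\nu_{\vec\omega}(Q')\approx\nu_{\vec\omega}(Q)$ (Lemma \ref{weights-lem1}), and treating small $\lambda$ by the trivial bound $\nu_{\vec\omega}(\{\cdots\})\le\nu_{\vec\omega}(Q)$, one obtains
\begin{equation*}
\sup_{Q}\sup_{\lambda>0}\inf_c\ \frac{\lambda}{\nu_{\vec\omega}(Q)^{1/p}}\,\nu_{\vec\omega}\big(\{x\in Q:|b(x)-c|>\lambda\}\big)^{1/p}<\infty,
\end{equation*}
whence $b\in BMO$ by Lemma \ref{BMO-lem2}. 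Case (4) is identical with $\sigma=\nu_{\vec\omega}^{-m}$, using the weak-$L\log^{+}L$ hypothesis, the bounds $\|g^k_l\|_{L^\infty}\lesssim1$ and $\prod_k\|g^k_l\|_{L^1(\omega_k)}=\frac{|Q'|^m}{\sigma(Q')^m}\prod_k\|\nu_{\vec\omega}^{-m}\chi_{Q'}\|_{L^1(\omega_k)}$, and Lemma \ref{weights-lem3} in place of Lemma \ref{weights-lem2}.

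The main obstacle — and the reason for the restriction $m-1\le 1/p<m$ — is precisely the weight bookkeeping just sketched: one needs the dual weight $\nu_{\vec\omega}^{1/(1-mp)}$ to lie locally in every $L^{p_k}(\omega_k)$ with the sharp quantitative control of Lemma \ref{weights-lem2}, and the H\"older splitting there requires all the exponents $s_{k,j}=(mp-1)p'_j/(p_kp)$ and $s_{k,k}=(mp-1)p'_k/\big((p+1-mp)p_k\big)$ to be positive, which forces exactly $m-1\le 1/p<m$. Outside this range the scheme degenerates, in agreement with the sharpness discussed in Appendix \ref{A}.
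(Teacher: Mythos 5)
Your proposal follows essentially the same route as the paper's own proof: the same choice of auxiliary weight $\mu=\nu_{\vec\omega}^{1/(1-mp)}$ (resp.\ $\nu_{\vec\omega}^{-m}$ at the endpoint), the same test functions $g^k_l$, $h_l$ built from the Fourier expansion of $1/K$ on the translated cube $Q'$, the same key identity $m|b(x)-b_{\mu,Q'}|=\delta^{-mn}\sum_l a_l[\Sigma\vec b,T](g^1_l,\dots,g^m_l)(x)h_l(x)$ exploiting that all slots carry the single measure $\mu\,dx$, and the same closing via Lemmas \ref{weights-lem2}, \ref{weights-lem3} and \ref{BMO-lem2}. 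Your observation that the restriction $m-1\le 1/p<m$ enters exactly through the positivity of the H\"older exponents in Lemma \ref{weights-lem2} also matches the paper's reasoning, so the argument is correct and not materially different.
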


\begin{proof}
Let
\begin{equation*}
    \left\{
   \begin{array}{ll}\vspace{1ex}
\mu:=\nu_{\vec{\omega}}^{-m}, & p_{1}=\cdots=p_{m}=1,\\
\mu:=\nu_{\vec{\omega}}^{\frac{1}{1-mp}}, & 1<p_{1},\cdots,p_{m}<\infty,
   \end{array}
 \right.
\end{equation*}
and let $s(x)=\overline{\mathrm{sgn}(\int_{Q'}(b(x)-b(y))\mu(y)dy)}$. Then for any $j\in \{1,\cdots,m\}$,
\begin{eqnarray*}
\begin{aligned}
&|b(x)-b_{\mu,Q'}|
=\frac{s(x)}{\mu(Q')^{m}}\int_{(Q')^{m}}\big(b(x)-b(y_{j})\big)\prod_{k=1}^{m}\mu(y_{k}) d\vec{y}.
\end{aligned}
\end{eqnarray*}
Define the functions
\begin{equation*}
    \left\{
   \begin{array}{ll}\vspace{1ex}
   g^{k}_{l}(y_{k})=e^{-\mathrm{i}\frac{\delta}{r}v^{k}_{l}\cdot y_{k}}\mu(y_{k})\chi_{Q'}(y_{k})\frac{|Q'|}{\mu(Q')}, k=1,\cdots,m, \\
   h_{l}(x)=e^{\mathrm{i}\frac{\delta}{r}v_{l}\cdot (x,\cdots,x)}\chi_{Q}(x)s(x),
\end{array}
 \right.
\end{equation*}
which shows that
\begin{align*}
m|b(x)-b_{\mu,Q'}|&= s(x)\frac{r^{mn}\delta^{-mn}}{\mu(Q')^{m}}
\int_{(Q')^{m}}\sum_{j=1}^{m}(b(x)-b(y_{j}))K(x-y_{1},\cdots,x-y_{m})\\
&\qquad\times\sum_{l}a_{l}e^{\mathrm{i}\frac{\delta}{r}v_{l}\cdot(x-y_{1},\cdots,x-y_{m})}\prod_{k=1}^{m}\mu(y_{k})d\vec{y}\\
&=\delta^{-mn}\sum_{l}a_{l}[\Sigma\vec{b},T](g^{1}_{l},\cdots,g^{m}_{l})(x)h_{l}(x).
\end{align*}

{\bf Case 1. $1<p_{1},\cdots, p_{m}<\infty$.} Note that $\nu_{\vec{\omega}}\in A_{mp}$ and let $s_{k}=(m-1/p)p'_{k}$ with $1/s_{1}+\cdots+1/s_{m}=1$. Therefore,
\begin{align*}
|Q|&=\int_{Q}\nu_{\vec{\omega}}(x)^{1/mp}\cdot \nu_{\vec{\omega}}(x)^{-1/mp}dx\\
&\leq \nu_{\vec{\omega}}(Q)^{\frac{1}{mp}}\Big(\int_{Q}\nu_{\vec{\omega}}(x)^{\frac{1}{1-mp}}dx\Big)^{\frac{mp-1}{mp}}\\
&\leq \nu_{\vec{\omega}}(Q)^{\frac{1}{mp}}\prod_{k=1}^{m}\Big(\int_{Q}\omega_{k}(x)^{1-p'_{k}}dx\Big)^{\frac{1}{mp'_{k}}}.
\end{align*}
The doubling property of $\mu_{1},\cdots,\mu_{m}$ to deduce that
\begin{align*}
&\inf_{c}\frac{\lambda}{\nu_{\vec{\omega}}(Q)^{1/p}}\nu_{\vec{\omega}}\big(x\in Q:|b(x)-c|>\lambda\big)^{1/p}\\
&\lesssim \frac{\lambda}{\nu_{\vec{\omega}}(Q)^{1/p}}\nu_{\vec{\omega}}\big(x\in Q:m|b(x)-b_{\mu,Q'}|>m\lambda\big)^{1/p}\\
&\lesssim \frac{\lambda}{\nu_{\vec{\omega}}(Q)^{1/p}}\nu_{\vec{\omega}}\big(x\in Q:\sum_{l}|a_{l}|\big|[\Sigma\vec{b},T](g_{1}^{1},\cdots,g_{l}^{m})(x)\big|>m\lambda\big)^{1/p}.
\end{align*}
By Lemma \ref{weights-lem2}, we get
\begin{align*}
\inf_{c}\frac{\lambda}{\nu_{\vec{\omega}}(Q)^{1/p}}\nu_{\vec{\omega}}\big(x\in Q:|b(x)-c|>\lambda\big)^{1/p}\lesssim \sum_{l}|a_{l}|.
\end{align*}

{\bf Case 2. $p_{1}=\cdots= p_{m}=1$.} It follows from $\nu_{\vec{\omega}}\in A_{1}$ that
$$\|\mu\chi_{Q'}\|_{L^{\infty}}=\|\nu_{\vec{\omega}}^{-m}\chi_{Q'}\|_{L^{\infty}}\lesssim \Big(\frac{|Q'|}{\nu_{\vec{\omega}}(Q')}\Big)^{m}.$$
From the fact that
\begin{eqnarray*}
\begin{aligned}
|Q'|&=\int_{Q'}\nu_{\vec{\omega}}(x)^{\frac{1}{2}}\cdot\nu_{\vec{\omega}}(x)^{-\frac{1}{2}}dx\\
&\lesssim |Q'|^{\frac{m-1}{2m}}\Big(\int_{Q'}\nu_{\vec{\omega}}(x)dx\Big)^{\frac{1}{2}}\Big(\int_{Q'}\nu_{\vec{\omega}}(x)^{-m}dx\Big)^{\frac{1}{2m}},
\end{aligned}
\end{eqnarray*}
we have
 $$\frac{|Q'|}{\mu(Q')}=\frac{|Q'|}{\int_{Q'}\nu_{\vec{\omega}}(x)^{-m}dx}\lesssim \big(\frac{\nu_{\vec{\omega}}(Q')}{|Q'|}\big)^{m}.$$
Combining the above computations we obtain $\|g^{1}_{l}\|_{L^{\infty}}=\cdots=\|g^{m}_{l}\|_{L^{\infty}}\lesssim 1$.

If $0<\lambda<\|g^{1}_{l}\|_{L^{\infty}}\big(\sum_{l}|a_{l}|\big)^{1/m}$, it is easy to see that
\begin{align*}
\inf_{c}\frac{\lambda}{\nu_{\vec{\omega}}(Q)}\nu_{\vec{\omega}}\big(x\in Q:|b(x)-c|>\lambda^{m}\big)\lesssim \lambda\lesssim \big(\sum_{l}|a_{l}|\big)^{1/m}.
\end{align*}

If $\|g^{1}_{l}\|_{L^{\infty}}\big(\sum_{l}|a_{l}|\big)^{1/m} \leq \lambda<\infty$, then $$\Phi\Big(\frac{\big(\sum_{l}|a_{l}|\big)^{1/m}|g^{k}_{l}(y_{k})|}{\lambda}\Big)=\frac{|g^{k}_{l}(y_{k})|\big(\sum_{l}|a_{l}|\big)^{1/m}}{\lambda}.$$
We can now continue the above estimate:
\begin{align*}
\inf_{c}\nu_{\vec{\omega}}\big(x\in Q:|b(x)-c|>\lambda^{m}\big)
&\leq \nu_{\vec{\omega}}\big(x\in Q:|b(x)-b_{\mu,Q'}|>\lambda^{m}\big)\\
&\lesssim \nu_{\vec{\omega}}\big(x\in Q:\sum_{l}|a_{l}|\big|[\Sigma\vec{b},T](g^{1}_{l},\cdots,g^{m}_{l})(x)\big|>m\lambda^{m}\big)\\
&\lesssim \prod_{k=1}^{m}\Big(\int_{Q'}\Phi\Big(\frac{|g^{k}_{l}(y_{k})|\big(\sum_{l}|a_{l}|\big)^{1/m}}{\lambda}\Big)\omega_{k}(y_{k})dy_{k}\Big)^{1/m}.
\end{align*}
By Lemma \ref{weights-lem3}, we have
\begin{align*}
\prod_{k=1}^{m}\Big(\int_{Q'}|g^{k}_{l}(y_{k})|\omega_{k}(y_{k})dy_{k}\Big)^{1/m}&\lesssim |Q'|\Big(\frac{|Q'|}{\nu_{\vec{\omega}}(Q')}\Big)^{m-1}\cdot \Big(\frac{\nu_{\vec{\omega}}(Q')}{|Q'|}\Big)^{m}\lesssim \nu_{\vec{\omega}}(Q').
\end{align*}
Combining with the above estimates, we conclude that
\begin{align*}
\inf_{c}\frac{\lambda}{\nu_{\vec{\omega}}(Q)}\nu_{\vec{\omega}}\big(x\in Q:|b(x)-c|>\lambda^{m}\big)\lesssim \big(\sum_{l}|a_{l}|\big)^{1/m}.
\end{align*}
From Lemma \ref{BMO-lem2}, we have that $b\in BMO$ and the proof is complete.
\end{proof}

\subsection{The iterated commutator of multilinear Calder\'{o}n-Zygmund operators}\label{the iterated commutators}

Much of the analysis of linear commutators has been extended to other context such as weighted spaces, multiparameter and multilinear settings. Iterated commutators have been considered too.
The boundedness of the commutator with the symbol functions $BMO$ has been extensively studied already. In this subsection, we study the necessity theory for iterated commutators.

First, we find illustrative to present some functions, which shed light on the boundedness of iterated commutators of multilinear Calder\'{o}n-Zygmund operators. The boundedness of the commutator with the symbol functions $BMO$ and $\mathcal{C}_{\alpha,q} (\alpha>0)$ has been extensively studied already. Unlike the case $\alpha\geq 0$ (the spaces $\mathcal{C}_{\alpha,q}$ are independent of the scale $0<q<\infty$), more difficulties are caused for the case $\alpha<0$. Then, some changes are needed to deal with the boundedness of the commutator with the symbol functions $\mathcal{C}_{\alpha,q}$ with $\alpha<0$.

\begin{lemma}\label{lem IC}
Let $1<q_{1},\cdots,q_{m}<\infty$, $-\frac{n}{q_{m}}<\alpha_{m}<0<\alpha_{1},\cdots,\alpha_{m-1}<1$, $\alpha_{1}+\cdots+\alpha_{m}=0$ and $b_{i}\in \mathcal{C}_{\alpha_{i},q_{i}}$ with $i=1,\cdots,m$. If $\gamma+\alpha_{m}>0$, there exists $\delta>0$ such that
\begin{align*}
M^{\sharp}_{\delta}\big([\Pi \vec{b},T](\vec{f})\big)(x)\lesssim& \mathcal{M}_{s}(\vec{f})(x),
\end{align*}
for any $s>q'_{m}$ and bounded compact supported functions $f_{1},\cdots,f_{m}$.
\end{lemma}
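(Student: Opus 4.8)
The plan is to establish this pointwise sharp--maximal estimate by the standard route: localize, expand the iterated commutator, and dominate each resulting piece by $\mathcal M_s(\vec f)(x)$, keeping careful track of the power of the side length of the cube. (Note that the hypotheses $-n/q_m<\alpha_m<0<\alpha_1,\dots,\alpha_{m-1}<1$ and $\alpha_1+\cdots+\alpha_m=0$ force $m\ge 2$.) Fix $x\in\mathbb R^n$ and a cube $Q\ni x$; set $Q^{*}=3\sqrt n\,Q$ (concentric), $\lambda_i=(b_i)_{Q^{*}}$, and write $f_i=f_i^{0}+f_i^{\infty}$ with $f_i^{0}=f_i\chi_{Q^{*}}$. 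From the pointwise kernel representation
\[
[\Pi\vec b,T](\vec f)(z)=\int_{(\mathbb R^n)^m}\prod_{i=1}^m\bigl(b_i(z)-b_i(y_i)\bigr)K(z,\vec y)\prod_{i=1}^m f_i(y_i)\,d\vec y ,
\]
the substitution $b_i(z)-b_i(y_i)=(b_i(z)-\lambda_i)-(b_i(y_i)-\lambda_i)$ and multilinearity give
\[
[\Pi\vec b,T](\vec f)(z)=\sum_{S\subseteq\{1,\dots,m\}}(-1)^{|S|}\Bigl(\prod_{j\notin S}(b_j(z)-\lambda_j)\Bigr)\,T\bigl(\vec g_{S}\bigr)(z),
\]
where $(\vec g_S)_i=(b_i-\lambda_i)f_i$ if $i\in S$ and $(\vec g_S)_i=f_i$ otherwise; each $T(\vec g_S)$ is split again according to whether each slot carries $f_i^{0}$ or $f_i^{\infty}$, and the sum of the all-global sub-terms recombines exactly into $[\Pi\vec b,T](\vec f^{\,\infty})(z)$. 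Fix a small $\delta\in(0,1/m)$ and put $c_Q:=[\Pi\vec b,T](\vec f^{\,\infty})(x)$, which is well defined since the $f_i$ are bounded with compact support and $x\notin\operatorname{supp}\vec f^{\,\infty}$. It then suffices to bound $\bigl(\frac1{|Q|}\int_Q|[\Pi\vec b,T]\vec f(z)-c_Q|^{\delta}\,dz\bigr)^{1/\delta}$ by $\mathcal M_s(\vec f)(x)$ uniformly in $Q$.

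For a term of the expansion with at least one local slot, I use the size condition $|K(z,\vec y)|\lesssim(\sum_k|z-y_k|)^{-mn}$: local slots integrate over $Q^{*}$, and each global slot placed in the annulus $2^{k}Q^{*}\setminus 2^{k-1}Q^{*}$ contributes a geometric factor $\lesssim 2^{-kn(m-1)}$, summable because $m\ge 2$. When all slots are local (and $S=\{1,\dots,m\}$) I instead invoke Kolmogorov's inequality and the weak endpoint bound $T\colon L^{1}\times\cdots\times L^{1}\to L^{1/m,\infty}$, which is where $\delta<1/m$ is needed. In all of these the factors $b_i-\lambda_i$ (on a local slot, on a global annulus, or as the prefactor $b_j(z)-\lambda_j$) are controlled for $i<m$ by the Lipschitz characterization, $\|b_i-\lambda_i\|_{L^{\infty}(2^kQ^{*})}\lesssim(2^k\ell(Q))^{\alpha_i}\|b_i\|_{\mathcal C_{\alpha_i,q_i}}$, and for $i=m$ by the Campanato bound in $L^{q_m}$, which is paired by H\"older with $f_m$ in $L^{q_m'}$ (hence by $\mathcal M_{q_m'}f_m\le\mathcal M_sf_m$, using $s>q_m'$), or, when it occurs as an exterior prefactor, paired by H\"older in the $z$-average with $|T(\cdots)|^{\delta}$ in $L^{q_m/\delta}$ (a second smallness requirement on $\delta$). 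Since $b_m$ enters exactly once and each $b_i$, $i<m$, exactly once, the total power of $\ell(Q)$ accumulated in every such term is $\ell(Q)^{\delta\sum_{i=1}^m\alpha_i}=\ell(Q)^{0}=1$; this cancellation, forced by $\sum_i\alpha_i=0$, is exactly what permits a scale-invariant bound $\lesssim\mathcal M_s(\vec f)(x)$.

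The remaining and genuinely delicate piece is $[\Pi\vec b,T](\vec f^{\,\infty})(z)-c_Q$. Re-expanding as above, it is a finite combination over $S$ of $\bigl(\prod_{j\notin S}(b_j(z)-\lambda_j)\bigr)\bigl(I_S(z)-I_S(x)\bigr)$ and of differences of the prefactors, where $I_S(z)=\int_{((Q^{*})^c)^m}\prod_{i\in S}(b_i(y_i)-\lambda_i)K(z,\vec y)\prod f_i^{\infty}(y_i)\,d\vec y$. I estimate $I_S(z)-I_S(x)$ via the regularity condition $|K(z,\vec y)-K(x,\vec y)|\lesssim\ell(Q)^{\gamma}\bigl(\sum_k|x-y_k|\bigr)^{-mn-\gamma}$ for $z\in Q$; splitting each $y_i$ into annuli $2^{k_i}Q^{*}$, the $i\in S$ integrals cost $(2^{k_i}\ell(Q))^{\alpha_i}$ and the kernel difference gains $(\max_i 2^{k_i})^{-mn-\gamma}\ell(Q)^{\gamma}$. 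The $\ell(Q)$-power is once more $\ell(Q)^{\sum_i\alpha_i}=1$ after the exterior prefactor is absorbed in the $z$-average, while the geometric ratio in the diagonal configuration $k_1=\cdots=k_m=k$ is $2^{-k(\gamma-\sum_{i\in S}\alpha_i)}$; since $\sum_{i\in S}\alpha_i\le\sum_{i<m}\alpha_i=-\alpha_m$, this series converges precisely when $\gamma+\alpha_m>0$. This is the only use of the hypothesis $\gamma+\alpha_m>0$, and verifying it --- together with checking that all off-diagonal annulus configurations are at least as favorable and that the exterior $b_m(z)-\lambda_m$ prefactor does not spoil the $\ell(Q)$-balance --- is the main obstacle of the proof. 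Assembling the three groups of estimates and choosing $\delta$ small enough in terms of $m$, $q_m$, $\gamma$ and the $\alpha_i$ yields the claimed bound uniformly in $Q$; taking the supremum over cubes $Q\ni x$ finishes the argument.
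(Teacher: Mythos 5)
Your proposal is correct, and its treatment of the tail is essentially the paper's: both use the H\"older regularity of $K$ in the $z$-variable, an annular decomposition of the far region, the cancellation $\ell(Q)^{\sum_i\alpha_i}=\ell(Q)^0$ coming from $\alpha_1+\cdots+\alpha_m=0$, and the observation that the per-annulus loss $2^{k\sum_{i<m}\alpha_i}=2^{-k\alpha_m}$ from the Campanato factors is beaten by the gain $2^{-k\gamma}$ exactly when $\gamma+\alpha_m>0$. Where you genuinely diverge is the local part. The paper (written out only for $m=2$) absorbs the pointwise bound $|b_1(z)-b_1(y_1)|\lesssim\|b_1\|_{\mathcal C_{\alpha_1,q_1}}|z-y_1|^{\alpha_1}$ into the kernel, thereby replacing $T$ by the bilinear fractional integral $I_{\alpha_1}$ of positive order, and then applies H\"older, Kolmogorov and the weak bound $I_{\alpha_1}\colon L^1\times L^1\to L^{n/(2n-\alpha_1),\infty}$; moreover it splits the integration domain diagonally ($\Omega_{x_0,4r}$ versus its complement), so no mixed local/global terms ever appear. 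You instead keep the Calder\'on--Zygmund kernel, bound $b_i-\lambda_i$ by $\ell(Q)^{\alpha_i}$ on the local cube for $i<m$ (and by its $L^{q_m}$ Campanato norm, paired with $f_m\in L^{q_m'}$, for $i=m$), and invoke $T$'s own endpoint bound $L^1\times\cdots\times L^1\to L^{1/m,\infty}$; splitting each $f_i$ separately then forces you to control the mixed terms by the size condition. Both routes are legitimate: yours is the more standard P\'erez--Pradolini--Torres--Trujillo-Gonz\'alez scheme and is written for general $m$, while the paper's use of $I_{\alpha_1}$ exploits the smoothing available from $\alpha_1>0$ and keeps the number of terms small. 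One small caveat: your claim that each global slot in a mixed term contributes a factor $2^{-kn(m-1)}$ is not the right bookkeeping --- the per-annulus decay is $2^{-kn\cdot\#\{\text{local slots}\}}$ and must also absorb the growth $2^{k\alpha_i}$ of the Campanato factors sitting on global slots --- but since $\sum_{i<m}\alpha_i=-\alpha_m<n/q_m<n$ and at least one slot is local, the corrected series still converges, so this is an imprecision rather than a gap.
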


\begin{proof}
For simplicity in notation and proof, we only give the arguments for $m=2$ and $b_{1}\in \mathcal{C}_{\alpha,q_{1}}, b_{2}\in \mathcal{C}_{-\alpha,q_{2}}$ with $\alpha>0$, since the other cases can be treated similarly.

Let $B:=B(x_{0},r)$ be a ball with $x\in B$ and $\Omega_{x_{0},r}:=\big\{(y_{1},y_{2}):|x_{0}-y_{1}|+|x_{0}-y_{2}|\leq r\big\}$. Then, $\Omega_{x_{0},4r}\subset B(x_{0},4r)\times B(x_{0},4r)$ for any $z\in B$ we have $$\big|[\Pi \vec{b},T](f_{1},f_{2})(z)-c_{B}\big|=\mathrm{I}^{B}_{1}(z)+\mathrm{II}^{B}_{2}(z)$$ with
\begin{align*}
&\mathrm{I}(z)=\int_{\Omega_{x_{0},4r}}(b_{1}(z)-b_{1}(y_{1}))(b_{2}(z)-b_{1}(y_{2}))K(z,y_{1},y_{2})f_{1}(y_{1}) f_{2}(y_{2})dy_{1}dy_{2},\\
&\mathrm{II}(z)=\int_{\Omega^{c}_{x_{0},4r}}(b_{1}(z)-b_{1}(y_{1}))(b_{2}(z)-b_{1}(y_{2}))K(z,y_{1},y_{2})f_{1}(y_{1}) f_{2}(y_{2})dy_{1}dy_{2}-c_{B},\\
&c_{B}=\frac{1}{|B|}\int_{B}\int_{\Omega^{c}_{x_{0},4r}}(b_{1}(z)-b_{1}(y_{1}))(b_{2}(z)-b_{1}(y_{2}))K(z,y_{1},y_{2})f_{1}(y_{1}) f_{2}(y_{2})dy_{1}dy_{2}dz.
\end{align*}
Therefore,
\begin{align*}
&\bigg(\frac{1}{|B|}\int_{B}\Big|\big|[\Pi \vec{b},T](\vec{f})(z)\big|^{\delta}-|c_{B}|^{\delta}\Big|dz\bigg)^{1/\delta}\lesssim \bigg(\frac{1}{|B|}\int_{B}\Big|[\Pi \vec{b},T](\vec{f})(z)-c_{B}\Big|^{\delta}\bigg)^{1/\delta}\lesssim \mathrm{I+II},
\end{align*}
where $\mathrm{I}=\big(\frac{1}{|B|}\int_{B}\big|\mathrm{I}(z)\big|^{\delta}dz\big)^{1/\delta}$ and $\mathrm{II}=\big(\frac{1}{|B|}\int_{B}\big|\mathrm{II}(z)\big|^{\delta}dz\big)^{1/\delta}$.

Let us consider first the term $\mathrm{I}$. By $|b_{1}(z)-b_{1}(y_{1})|\lesssim \|b_{1}\|_{\mathcal{C}_{\alpha,q_{1}}}|z-y_{1}|^{\alpha}$ for $y_{1}\in 2B,z\in B$, one immediately obtains
\begin{align*}
\mathrm{I}\lesssim&\bigg(\frac{1}{|B|}\int_{B}\Big(\big|b_{2}(z)-b_{2,B}\big|I_{\alpha}(|f^{0}_{1}|,|f^{0}_{2}|)(z)\Big)^{\delta}dz\bigg)^{1/\delta}\\
&+\bigg(\frac{1}{|B|}\int_{B}I_{\alpha}(|f_{1}^{0}|,|b_{2}-b_{2,B}||f_{2}^{0}|)(z)\Big)^{\delta}dz\bigg)^{1/\delta}\\
&=:\mathrm{I}_{1}+\mathrm{I}_{2},
\end{align*}
where $f_{i}^{0}=f_{i}\chi_{2B}, i=1,2$ and
$$I_{\alpha}(f_{1},f_{2})(x)=\int_{\mathbb{R}^{2n}}\frac{f_{1}(y_{1})f_{2}(y_{2})}{\big(|x-y_{1}|+|x-y_{2}|\big)^{2n-\alpha}}dy_{1}dy_{2}.$$
H\"{o}lder inequality, Kolmogorov inequality and the weak boundedness of $I_{\alpha}$ yield that
\begin{align*}
\mathrm{I}_{1}&\lesssim \bigg(\frac{1}{|B|}\int_{B}\big|b_{2}(z)-b_{2,B}\big|^{q_{2}}dz\bigg)^{\frac{1}{q_{2}}} \bigg(\frac{1}{|B|}\int_{B}\big|I_{\alpha}(|f_{1}^{0}|,|f^{0}_{2}|)(x)\big|^{t}dz\bigg)^{\frac{1}{t}}|B|^{-\alpha/n}\\
&\lesssim |B|^{-2}\|b_{2}\|_{\mathcal{C}_{-\alpha,q_{2}}}\|I_{\alpha}(|f_{1}^{0}|,|f^{0}_{2}|)\|_{L^{\frac{n}{2n-\alpha}},\infty}\\
&\lesssim \mathcal{M}(f_{1},f_{2})(x),
\end{align*}
where $\delta<t=\frac{\delta^{2}q_{2}}{\delta q_{2}-1}<\frac{n}{2n-\alpha}$. Similarly,
\begin{align*}
\mathrm{I}_{2}\lesssim |B|^{-2+\alpha/n}\|I_{\alpha}(|f_{1}^{0}|,|b_{2}-b_{2,B}||f^{0}_{2}|)\|_{L^{\frac{n}{2n-\alpha}},\infty}\lesssim \mathcal{M}_{s}(f_{1},f_{2})(x),
\end{align*}
where $q'_{2}\leq s<\infty$. For $\mathrm{II}$, due to $|z-z'|\leq \frac{1}{2}\big(|z-y_{1}|+|z-y_{2}|\big)$, then
$$\big|K(z,y_{1},y_{2})-K(z',y_{1},y_{2})\big|\lesssim \frac{|z-z'|^{\gamma}}{\big(|z-y_{1}|+|z-y_{2}|\big)^{2n+\gamma}}.$$
Therefore,
\begin{align*}
\mathrm{II}(z)\lesssim& |B|^{\gamma/n}\int_{\Omega^{c}_{x_{0},4r}}\frac{|b_{1}(z)-b_{1}(y_{1})||b_{2}(z)-b_{2}(y_{2})||f_{1}(y_{1})||f_{2}(y_{2})|}{\big(|z-y_{1}|+|z-y_{2}|\big)^{2n+\gamma}}dy_{1}dy_{2}\\
\lesssim& |B|^{\gamma/n}\sum_{k=2}^{\infty}\int_{\Omega_{2^{k+1}r}\backslash\Omega_{2^{k}r}}\frac{|b_{1}(z)-b_{1}(y_{1})||b_{2}(z)-b_{2}(y_{2})||f_{1}(y_{1})||f_{2}(y_{2})|}{\big(|z-y_{1}|+|z-y_{2}|\big)^{2n+\gamma}}dy_{1}dy_{2}\\
\lesssim& \sum_{k=2}^{\infty}\frac{|B|^{\gamma/n}}{|2^{k}B|^{2+\gamma/n}}\int_{\Omega_{2^{k+1}r}\backslash\Omega_{2^{k}r}}|b_{1}(z)-b_{1}(y_{1})||b_{2}(z)-b_{2}(y_{2})||f_{1}(y_{1})||f_{2}(y_{2})|dy_{1}dy_{2}\\
\lesssim& \sum_{k=2}^{\infty}\frac{|B|^{\gamma/n}|b_{2}(z)-b_{2,B}|}{|2^{k}B|^{2+\gamma/n-\alpha/n}}\int_{2^{k+1}B}\int_{2^{k+1}B}|f_{1}(y_{1})||f_{2}(y_{2})|dy_{1}dy_{2}\\
&+ \sum_{k=2}^{\infty}\frac{|B|^{\gamma/n}}{|2^{k}B|^{2+\gamma/n-\alpha/n}}\int_{2^{k+1}B}\int_{2^{k+1}B}|b_{2,B}-b_{2}(y_{2})||f_{1}(y_{1})||f_{2}(y_{2})|dy_{1}dy_{2}.
\end{align*}
From $\gamma>\alpha$ and $s>q'_{2}$, it is easy to see that
$$\mathrm{II}\lesssim \mathcal{M}_{s}(f_{1},f_{2})(x).$$
Collecting our estimates, we have shown that
\begin{align*}
M^{\sharp}_{\delta}\big([\Pi\vec{b},T](\vec{f})\big)(x)\lesssim& \mathcal{M}_{s}(\vec{f})(x)
\end{align*}
for any $s>q'_{2}$ and bounded compact supported functions $f_{1},f_{2}$.
\end{proof}

\begin{theorem}\label{IC-1}
Let $1<q_{1},\cdots,q_{m}<\infty$, $-\frac{n}{q_{m}}<\alpha_{m}<0<\alpha_{1},\cdots,\alpha_{m-1}<1$, $\alpha_{1}+\cdots+\alpha_{m}=0$ and $b_{i}\in \mathcal{C}_{\alpha_{i},q_{i}}$ with $i=1,\cdots,m$. If $\gamma+\alpha_{m}>0$,  $\omega_{k}\in A_{p_{k}}$ and $q_{m}$ is large enough, then $[\Pi\vec{b},T]$ is bounded from $L^{p_{1}}(\omega_{1})\times\cdots\times L^{p_{m}}(\omega_{m})$ to $L^{p}(\nu_{\vec{\omega}})$.
\end{theorem}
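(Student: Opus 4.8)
The plan is to deduce the weighted bound from the pointwise sharp maximal function estimate of Lemma~\ref{lem IC}, combined with the Fefferman--Stein inequality for $A_\infty$ weights and the weighted boundedness of the multilinear Hardy--Littlewood maximal operator established by Lerner et al. in \cite{LOPTT2009}. The commutator structure never has to be touched again after Lemma~\ref{lem IC}; everything else is weight bookkeeping.

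First I would fix the auxiliary exponent. Since $q_m$ is assumed large, $q_m'$ is close to $1$; using the openness (self-improvement) of the Muckenhoupt classes, $\omega_k\in A_{p_k}$ implies $\omega_k\in A_{p_k-\eta}$ for some $\eta>0$, so one can choose a single $s$ with $q_m'<s<\min\{p_1,\dots,p_m\}$ and such that moreover $\omega_k\in A_{p_k/s}$ for every $k=1,\dots,m$. It is standard that $\omega_k\in A_{p_k}$ for all $k$ forces $\vec{\omega}\in A_{\vec{P}}$, and then Lemma~\ref{weights-lem1} gives $\nu_{\vec{\omega}}\in A_{mp}\subset A_\infty$, which is exactly what is needed to apply the Fefferman--Stein inequality with weight $\nu_{\vec{\omega}}$.

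Next, for bounded compactly supported $f_1,\dots,f_m$ (the functions on which Lemma~\ref{lem IC} is stated) I would run the standard Fefferman--Stein argument. After verifying the a priori finiteness $\|[\Pi\vec{b},T](\vec{f})\|_{L^p(\nu_{\vec{\omega}})}<\infty$ — the only genuinely technical point, obtained from the local integrability of $[\Pi\vec{b},T](\vec{f})$ for such $f_i$ and the Campanato symbols $b_i$, together with a truncation of the kernel — the Fefferman--Stein inequality and Lemma~\ref{lem IC} yield, with the $\delta$ produced there,
$$\big\|[\Pi\vec{b},T](\vec{f})\big\|_{L^p(\nu_{\vec{\omega}})}\lesssim\big\|M^{\sharp}_{\delta}\big([\Pi\vec{b},T](\vec{f})\big)\big\|_{L^p(\nu_{\vec{\omega}})}\lesssim\big\|\mathcal{M}_{s}(\vec{f})\big\|_{L^p(\nu_{\vec{\omega}})}.$$
It then remains to control $\mathcal{M}_{s}$. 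Writing $\mathcal{M}_{s}(\vec{f})^s=\mathcal{M}(|f_1|^s,\dots,|f_m|^s)$ with $\mathcal{M}$ the multilinear maximal operator, and observing that the weight attached to the exponent vector $(p_1/s,\dots,p_m/s)$ is again $\nu_{\vec{\omega}}$, the choice of $s$ ensures $\vec{\omega}\in A_{(p_1/s,\dots,p_m/s)}$; hence the multilinear maximal theorem of \cite{LOPTT2009} gives
$$\big\|\mathcal{M}_{s}(\vec{f})\big\|_{L^p(\nu_{\vec{\omega}})}^s=\big\|\mathcal{M}(|f_1|^s,\dots,|f_m|^s)\big\|_{L^{p/s}(\nu_{\vec{\omega}})}\lesssim\prod_{k=1}^m\big\||f_k|^s\big\|_{L^{p_k/s}(\omega_k)}=\prod_{k=1}^m\|f_k\|_{L^{p_k}(\omega_k)}^s.$$
Taking $s$-th roots and then extending from bounded compactly supported functions to all of $L^{p_1}(\omega_1)\times\cdots\times L^{p_m}(\omega_m)$ by density completes the proof.

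The main obstacle I anticipate is twofold: (i) establishing the qualitative a priori bound needed before the Fefferman--Stein inequality can be invoked, which is where the structural hypotheses $\gamma+\alpha_m>0$ and $b_i\in\mathcal{C}_{\alpha_i,q_i}$ really enter; and (ii) checking that the window $q_m'<s<\min_k p_k$ with simultaneously $\omega_k\in A_{p_k/s}$ for all $k$ is nonempty — this is precisely the role of the hypothesis that $q_m$ be large enough, used together with the self-improvement of the $A_p$ classes.
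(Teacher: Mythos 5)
Your proposal is correct and follows essentially the same route as the paper: self-improvement of $A_{p_k}$ to pick $s>q_m'$ with $\omega_k\in A_{p_k/s}$ (this is where ``$q_m$ large enough'' enters), then Lemma \ref{lem IC} plus Fefferman--Stein with $\nu_{\vec{\omega}}\in A_\infty$, and finally the weighted bound for $\mathcal{M}_s$. The only cosmetic difference is the last step — you invoke the multilinear maximal theorem of \cite{LOPTT2009} directly, while the paper dominates $\mathcal{M}_s(\vec{f})$ pointwise by $\prod_k M_s(f_k)$ and applies H\"older together with the linear maximal theorem — and you are in fact more explicit than the paper about the a priori finiteness needed to apply Fefferman--Stein.
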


\begin{proof}
As stated in \cite{PPTT2014}, the proof of these types of estimates is standard. See \cite[Theorems 1.1 and 1.6]{PT2003} for the linear cases, and, \cite[Theorem 3.19]{LOPTT2009}, \cite[Theorem 3.2]{PPTT2014} for multilinear cases, for example.

For $\omega_{k}\in A_{p_{k}}$, there exists a constant $\epsilon_{k}>0$ such that $\omega_{k}\in A_{p_{k}-\epsilon_{k}}$. There exists $\epsilon>0$ small enough such that $\omega_{k}\in A_{\tilde{p}_{k}}$, where $\tilde{p}_{k}:=\frac{p_{k}}{1+\epsilon}$. By Lemma \ref{lem IC} with $s=1+\epsilon$, from a standard argument that we can obtain
\begin{align*}
&\|[\Pi\vec{b},T](\vec{f})\|_{L^{p}(\nu_{\vec{\omega}})}\lesssim \|M_{\delta}\big([\Pi\vec{b},T](\vec{f})\big)\|_{L^{p}(\nu_{\vec{\omega}})}
\lesssim\|M^{\sharp}_{\delta}\big([\Pi\vec{b},T](\vec{f})\big)\|_{L^{p}(\nu_{\vec{\omega}})}\\
&\lesssim\big\|\mathcal{M}_{s}(\vec{f})\big\|_{L^{p}(\nu_{\vec{\omega}})}\lesssim \prod_{k=1}^{m}\|M_{s}(f_{k})\|_{L^{p_{k}}(\omega_{k})}
\lesssim \prod_{k=1}^{m}\big\||f_{k}|^{s}\big\|^{1/s}_{L^{p_{k}/s}(\omega_{k})}= \prod_{k=1}^{m}\|f_{k}\|_{L^{p_{k}}(\omega_{k})}.
\end{align*}
Thus, the proof of Theorem \ref{IC-1} is completed.
\end{proof}

Theorem \ref{IC-1} implies that the symbol function belongs to $BMO$ space is not necessary condition for the boundedness of iterated commutator acting on product of (weighted) Lebesgue spaces. However, under certain condition, we have the following conclusion.
\medskip

{\bf Theorem E} (cf. \cite{WZTamp})\quad Suppose that $K$ is a homogeneous function of degree $-mn$, and there exists a ball $\mathbb{B}\subset \mathbb{R}^{mn}$ such that $1/K$ can be expended to a Fourier series in $\mathbb{B}$. If $1<p_1,\cdots,p_m<\infty$, $\frac 1p=\frac 1{p_1}+\cdots+\frac 1{p_m}$ and $b_{1}=b_{2}=\cdots=b_{m}$, then
$$b\in BMO\,\Longleftrightarrow\, [\Pi\vec{b},T]:\, L^{p_1}\times\cdots\times L^{p_m}\to L^p.$$

\medskip

Some of the techniques employed in \cite{WZTamp} do not apply to the weighted result, because the weights are not locally integrable. Therefore, we can only obtain

\begin{theorem}\label{IC-2}
Let $m\in \mathbb{N}$, $\vec{b}=(b,\cdots,b)$ and $T$ be a $m$-linear Calder\'{o}n-Zygmund operator with kernel $K$ and there exists a cube $Q\subset \mathbb{R}^{mn}$ such that $1/K$ can be expended to a Fourier series in $Q$. Then the following three statements are equivalent:
\begin{enumerate}
\item [\rm(1)] $b\in BMO$.
\item [\rm(2)] For $1<p_{1},\cdots,p_{m}<\infty$ with $m-1\leq 1/p=1/p_{1}+\cdots+1/p_{m}<m$ and $\vec{\omega}\in A_{\vec{P}}$,
$$[\Pi\vec{b},T]: L^{p_{1}}(\omega_{1})\times\cdots\times L^{p_{m}}(\omega_{m})\rightarrow L^{p}(\nu_{\vec{\omega}}),\qquad i=1,\cdots,m.$$
\item [\rm(3)] For $1<p_{1},\cdots,p_{m}<\infty$ with $m-1\leq 1/p=1/p_{1}+\cdots+1/p_{m}<m$ and $\vec{\omega}\in A_{\vec{P}}$,
$$[\Pi\vec{b},T]: L^{p_{1}}(\omega_{1})\times\cdots\times L^{p_{m}}(\omega_{m})\rightarrow L^{p,\infty}(\nu_{\vec{\omega}}),\qquad i=1,\cdots,m.$$
\item [\rm(4)] For $\vec{\omega}\in A_{1,\cdots,1}, \Phi(t)=t(1+\log^{+}t)$,
$$\nu_{\vec{\omega}}\big(\{x\in \mathbb{R}^{n}:|[\Pi\vec{b},T](f_{1},\cdots,f_{m})|>\lambda^{m}\}\big)\lesssim \prod_{k=1}^{m}\Big(\int_{\mathbb{R}^n}\Phi^{(m)}(\frac{|f_{k}(y)|}{\lambda})\omega_{k}(y)dy\Big)^{1/m}$$
for any $\lambda>0$.
\end{enumerate}
\end{theorem}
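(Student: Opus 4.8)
The plan is to establish the chain $(1)\Rightarrow(2)\Rightarrow(3)\Rightarrow(1)$ together with $(1)\Rightarrow(4)\Rightarrow(1)$. The forward implications $(1)\Rightarrow(2)$ and $(1)\Rightarrow(4)$ are the known weighted bounds for iterated commutators of multilinear Calder\'on--Zygmund operators with $BMO$ symbols: the strong-type estimate on the genuinely multilinear class $A_{\vec{P}}$ and the $L\log L$-type endpoint on $A_{(1,\dots,1)}$, the $\Phi^{(m)}$ on the right of (4) recording the $m$-fold iteration (see \cite{PPTT2014,LOPTT2009}); and $(2)\Rightarrow(3)$ is immediate since $L^{p}(\nu_{\vec{\omega}})\hookrightarrow L^{p,\infty}(\nu_{\vec{\omega}})$. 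Thus the genuinely new content is $(3)\Rightarrow(1)$ and $(4)\Rightarrow(1)$, and for these I would adapt the Fourier-series scheme already used in the proofs of Theorems \ref{main1} and \ref{LC}.

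First, fix an arbitrary cube $Q=Q(x_{0},r)$ and produce, exactly as in the proof of Theorem \ref{main1}, a shifted companion cube $Q'$ of side length $r$ so that $(x-y_{1},\dots,x-y_{m})/r$ lies in the neighbourhood of $\mathbb{R}^{mn}$ on which $1/K$ has an absolutely convergent Fourier expansion $1/K=\sum_{l}a_{l}e^{\mathrm{i}v_{l}\cdot(\cdot)}$, $\sum_{l}|a_{l}|<\infty$; after rescaling, the same expansion is available for $(x-y_{1},\dots,x-y_{m})$ with $x\in Q$, $y_{k}\in Q'$. Put $\mu:=\nu_{\vec{\omega}}^{1/(1-mp)}$ when $1<p_{1},\dots,p_{m}<\infty$ and $m-1\le 1/p<m$, and $\mu:=\nu_{\vec{\omega}}^{-m}$ in the endpoint case $p_{1}=\dots=p_{m}=1$, and define
\begin{equation*}
g^{k}_{l}(y_{k})=e^{-\mathrm{i}\frac{\delta}{r}v^{k}_{l}\cdot y_{k}}\mu(y_{k})\chi_{Q'}(y_{k})\frac{|Q'|}{\mu(Q')},\qquad h_{l}(x)=e^{\mathrm{i}\frac{\delta}{r}v_{l}\cdot(x,\dots,x)}\chi_{Q}(x)s(x),
\end{equation*}
with $s(x)=\overline{\mathrm{sgn}\big((\int_{Q'}(b(x)-b(y))\mu(y)\,dy)^{m}\big)}$. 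Since $[\Pi\vec{b},T]$ with $\vec{b}=(b,\dots,b)$ acts as $\int\prod_{j=1}^{m}(b(x)-b(y_{j}))K(x,\vec{y})\prod_{k}f_{k}(y_{k})\,d\vec{y}$, feeding in $g^{1}_{l},\dots,g^{m}_{l}$, forming $\sum_{l}a_{l}(\cdot)h_{l}(x)$, and using the Fourier expansion to cancel $K$ reduces the inner integral to the \emph{product} $\int_{(Q')^{m}}\prod_{j=1}^{m}(b(x)-b(y_{j}))\prod_{k}\mu(y_{k})\,d\vec{y}=\mu(Q')^{m}\big(b(x)-b_{\mu,Q'}\big)^{m}$; after absorbing the normalizing constants this yields an identity of the shape
\begin{equation*}
|b(x)-b_{\mu,Q'}|^{m}\chi_{Q}(x)=\delta^{-mn}\Big|\sum_{l}a_{l}\,[\Pi\vec{b},T](g^{1}_{l},\dots,g^{m}_{l})(x)\,h_{l}(x)\Big|\qquad\text{on }Q.
\end{equation*}
It is precisely this factorization — available only because the symbols coincide — that converts the $m$-fold commutator into the clean $m$-th power $|b-b_{\mu,Q'}|^{m}$, which is what makes iterated commutators tractable, in contrast to arbitrary products of distinct commutators.

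Next I would bound the test functions and invoke the hypothesis. In the endpoint case, $\nu_{\vec{\omega}}\in A_{1}$ together with the computations following Lemma \ref{weights-lem1} gives $\|g^{k}_{l}\|_{L^{\infty}}\lesssim 1$, while in the range $m-1\le 1/p<m$ Lemma \ref{weights-lem2} gives $\prod_{k}\|g^{k}_{l}\|_{L^{p_{k}}(\omega_{k})}\lesssim\nu_{\vec{\omega}}(Q)^{1/p}$; moreover Lemmas \ref{weights-lem2} and \ref{weights-lem3} are exactly what guarantees that $\nu_{\vec{\omega}}^{1/(1-mp)}\omega_{k}$ (resp. $\nu_{\vec{\omega}}^{-m}\omega_{k}$) is locally integrable, so the right-hand sides of (3) and (4) are finite for these choices. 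For $(3)\Rightarrow(1)$, the weak-type bound for $[\Pi\vec{b},T]$ then yields $t^{m}\,\nu_{\vec{\omega}}\big(\{x\in Q:|b(x)-b_{\mu,Q'}|>t\}\big)^{1/p}\lesssim\nu_{\vec{\omega}}(Q)^{1/p}$ for every $t>0$, uniformly in $Q$, which is the statement $\|b\|_{BMO^{mp}_{\nu_{\vec{\omega}},*}}\lesssim 1$. For $(4)\Rightarrow(1)$, split at $\lambda\sim 1$: for small $\lambda$ the trivial bound $\nu_{\vec{\omega}}(\{\cdot\})\le\nu_{\vec{\omega}}(Q)$ suffices, and for $\lambda\gtrsim 1$ one uses that $\Phi^{(m)}(t)=t$ for $0<t\le 1$, so with $\|g^{k}_{l}\|_{L^{\infty}}\lesssim 1$ the Orlicz integrals on the right of (4) become linear; the endpoint hypothesis then gives $\nu_{\vec{\omega}}\big(\{x\in Q:|b(x)-b_{\mu,Q'}|>\lambda^{m}\}\big)\lesssim\nu_{\vec{\omega}}(Q)/\lambda$, i.e. $\|b\|_{BMO^{1/m}_{\nu_{\vec{\omega}},*}}\lesssim 1$. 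In either case Lemma \ref{BMO-lem2} converts this into $b\in BMO$.

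The main obstacle is not the Fourier-series device, which is by now routine, but the weighted bookkeeping forced by working with $\vec{\omega}\in A_{\vec{P}}$ without the individual conditions $\omega_{k}\in A_{p_{k}}$: since $\omega_{k}$ need not even be locally integrable, one cannot use $\omega_{k}^{1-p_{k}'}$ or $\omega_{k}^{-1}$ as the weight in the test functions and must run everything through a single power of the product weight $\nu_{\vec{\omega}}$. Making the normalized $g^{k}_{l}$ uniformly bounded in $L^{\infty}$ (or in $L^{p_{k}}(\omega_{k})$) while simultaneously keeping $\nu_{\vec{\omega}}^{1/(1-mp)}\omega_{k}$ locally integrable is exactly what Lemmas \ref{weights-lem2} and \ref{weights-lem3} accomplish, and this is possible only in the index range $m-1\le 1/p<m$; as noted in the text this range is sharp (Appendix \ref{A}). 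A secondary technical point, handled as in the proof of Theorem \ref{main1}, is passing from the series $\sum_{l}a_{l}[\Pi\vec{b},T](g^{1}_{l},\dots,g^{m}_{l})h_{l}$ to a single application of the (quasi-sublinear) weak-type or Orlicz estimate, using $\sum_{l}|a_{l}|<\infty$ and the fact that $|g^{k}_{l}|$ does not depend on $l$.
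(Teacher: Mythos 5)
Your proposal follows essentially the same route as the paper: the same choice of auxiliary weight $\mu=\nu_{\vec{\omega}}^{1/(1-mp)}$ (resp. $\nu_{\vec{\omega}}^{-m}$ at the endpoint), the same Fourier-expanded test functions $g^{k}_{l},h_{l}$ on the shifted cube $Q'$, the same key identity turning $[\Pi\vec{b},T]$ with equal symbols into the clean power $|b-b_{\mu,Q'}|^{m}$, and the same reliance on Lemmas \ref{weights-lem2}, \ref{weights-lem3} and \ref{BMO-lem2}, with the paper itself deferring the remaining estimates to the computation in Theorem \ref{LC}. The only cosmetic difference is placing the sign factor inside $s(x)$ as $\overline{\mathrm{sgn}((\cdot)^{m})}$ rather than using $s(x)^{m}$ in $h_{l}$, which is equivalent.
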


\begin{proof}
We can now estimate as follows. Let
\begin{equation*}
    \left\{
   \begin{array}{ll}\vspace{1ex}
\mu:=\nu_{\vec{\omega}}^{-m}, & p_{1}=\cdots=p_{m}=1,\\
\mu:=\nu_{\vec{\omega}}^{\frac{1}{1-mp}}, & 1<p_{1},\cdots,p_{m}<\infty,
   \end{array}
 \right.
\end{equation*}
and let $s(x)=\overline{\mathrm{sgn}(\int_{Q'}(b(x)-b(y))\mu(y)dy)}$. Then $\mu\in A_{\infty}$ and for any $j=1,\cdots,m$,
\begin{eqnarray*}
\begin{aligned}
&|b(x)-b_{\mu,Q'}|
=\frac{s(x)}{\mu(Q')^{m}}\int_{(Q')^{m}}\big(b(x)-b(y_{j})\big)\prod_{k=1}^{m}\mu(y_{k}) d\vec{y}.
\end{aligned}
\end{eqnarray*}
Define the functions
$$h_{l}(x)=e^{\mathrm{i}\frac{\delta}{r}v_{l}\cdot (x,\cdots,x)}\chi_{Q}(x)s(x)^{m},\qquad g^{k}_{l}(y_{l})=e^{-\mathrm{i}\frac{\delta}{r}v^{k}_{l}\cdot y_{k}}\mu(y_{k})\chi_{Q'}(y_{k})\frac{|Q'|}{\mu(Q')}, \quad k=1,\cdots,m.$$
This shows that
\begin{align*}
|b(x)-b_{\mu,Q'}|^{m}&= s(x)^{m}\frac{r^{mn}\delta^{-mn}}{\mu(Q')^{m}}
\int_{(Q')^{m}}\prod_{k=1}^{m}(b(x)-b(y_{j}))K(x-y_{1},\cdots,x-y_{m})\\
&\qquad\times\sum_{l}a_{l}e^{\mathrm{i}\frac{\delta}{r}v_{l}\cdot(x-y_{1},\cdots,x-y_{m})}\prod_{k=1}^{m}\mu(y_{k})d\vec{y}\\
&=\delta^{-mn}\sum_{l}a_{l}[\Pi\vec{b},T](g^{1}_{l},\cdots,g^{m}_{l})(x)h_{l}(x).
\end{align*}

If $0<\lambda<\|g^{1}_{l}\|_{L^{\infty}}\big(\sum_{l}|a_{l}|\big)^{1/m}$, it is easy to see that
\begin{align*}
\inf_{c}\frac{\lambda}{\nu_{\vec{\omega}}(Q)}\nu_{\vec{\omega}}\big(x\in Q:|b(x)-c|>\lambda^{m}\big)\lesssim \lambda\lesssim \big(\sum_{l}|a_{l}|\big)^{1/m}.
\end{align*}

If $\|g^{1}_{l}\|_{L^{\infty}}\big(\sum_{l}|a_{l}|\big)^{1/m} \leq \lambda<\infty$, then \begin{align*}
\Phi^{(m)}\Big(\frac{\big(\sum_{l}|a_{l}|\big)^{1/m}|g^{k}_{l}(y_{k})|}{\lambda}\Big)=\cdots=\Phi\Big(\frac{\big(\sum_{l}|a_{l}|\big)^{1/m}|g^{k}_{l}(y_{k})|}{\lambda}\Big)
=\frac{|g^{k}_{l}(y_{k})|\big(\sum_{l}|a_{l}|\big)^{1/m}}{\lambda}.
\end{align*}
The same estimates as in the calculation process of Theorem \ref{LC}, we complete the proof of Theorem \ref{IC-2}.
\end{proof}

\appendix

\section{Appendix}\label{A}

\begin{proposition}\label{Kernel}
Let $K$ be a function defined off the diagonal $x=y_1=\cdots =y_m$ in $(\mathbb{R}^n)^{m+1}$, satisfying
\begin{equation*}
  |K(x,y_1,\cdots,y_i,\cdots,y_m)-K(x,y_1,\cdots,y_i',\cdots,y_m)|
  \lesssim \frac{|y_i-y_i'|^{\gamma}}{(\sum_{k=1}^m|x-y_k|)^{mn+\gamma}},
\end{equation*}
whenever $|y_i-y_i'|\leqslant \frac{1}{2}\max_{1\leqslant k\leqslant m}|x-y_k|$.
Then, for any open cone $\widetilde{\Gamma}\subset (\mathbb{R}^n)^m$, there exists an open cone $\Gamma\subset \widetilde{\Gamma}$
such that for every $\{h_l=(h_l^1,\cdots,h_l^m)\}_l\subset \Gamma$ satisfying $|h_l|\rightarrow \infty$ as $l\rightarrow \infty$ and for any $\mu_{1},\cdots,\mu_{m}\in A_{\infty}$, we have
\begin{equation*}
    \begin{aligned}
 |h_l|^{mn}\Big\|\int_{\prod\limits_{{k=1}}^m(Q-\sqrt[n]{Q}h_l^k)}
     & |K(\cdot,y_1,\cdots,y_m)\\
      &-K(\cdot,y_1,\cdots,y_{i-1},a_{Q}-\sqrt[n]{|Q|}h_l^i,y_{i+1},\cdots,y_m)|\prod_{k=1}^{m}\mu_{k}(y_{k})d\vec{y}\Big\|_{L^{\infty}(Q)}
  \rightarrow 0,
    \end{aligned}
  \end{equation*}
  as $l \rightarrow \infty$ uniformly for all cubes $Q$, where $a_Q$ denotes the center of $Q.$
\end{proposition}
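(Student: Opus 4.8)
The plan is to verify hypothesis~(ii) of Theorem~\ref{GC2} directly from the size--smoothness structure of a genuine Calder\'on--Zygmund kernel: I will make the \emph{integrand} uniformly small using the regularity of $K$, and only afterwards integrate against the product weight, so that the Fourier-series device used for smooth kernels is here replaced by a purely quantitative local estimate.

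First I would fix an arbitrary cube $Q$, write $\ell:=\sqrt[n]{|Q|}$ and $a_Q$ for its side length and center, and put $y_i^{\ast}:=a_Q-\ell\,h_l^i$. For every $x\in Q$ and every $\vec y$ with $y_k\in Q-\ell h_l^k$ one has the two elementary bounds $|y_i-y_i^{\ast}|\le\tfrac{\sqrt n}{2}\,\ell$ and $\sum_{k=1}^m|x-y_k|\ge\max_k|x-y_k|\ge\ell\big(\max_k|h_l^k|-\sqrt n\big)\ge\ell\big(|h_l|/\sqrt m-\sqrt n\big)$, the last step because $\max_k|h_l^k|\ge|h_l|/\sqrt m$. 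Hence there is a threshold $L_0=L_0(m,n)$, \emph{independent of $Q$}, such that once $|h_l|\ge L_0$ one has simultaneously $|y_i-y_i^{\ast}|\le\tfrac12\max_k|x-y_k|$ (so the regularity hypothesis on $K$ applies to the pair $y_i,y_i^{\ast}$) and $\sum_k|x-y_k|\ge c_{m,n}\,\ell\,|h_l|$.

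Next, plugging these into the regularity hypothesis gives, for $|h_l|\ge L_0$, all $x\in Q$ and all such $\vec y$,
\[
\big|K(x,y_1,\dots,y_m)-K(x,y_1,\dots,y_{i-1},y_i^{\ast},y_{i+1},\dots,y_m)\big|\lesssim\frac{|y_i-y_i^{\ast}|^{\gamma}}{\big(\sum_k|x-y_k|\big)^{mn+\gamma}}\lesssim\frac{1}{\ell^{mn}\,|h_l|^{mn+\gamma}},
\]
a bound independent of $x$. Integrating it over $\prod_{k=1}^m(Q-\ell h_l^k)$ against $\prod_{k=1}^m\mu_k(y_k)\,dy_k$, multiplying by $|h_l|^{mn}$, and using $\ell^{mn}=|Q|^m$, one reduces the proposition to the assertion
\[
\frac{1}{|h_l|^{\gamma}}\cdot\frac{\prod_{k=1}^m\mu_k\big(Q-\ell h_l^k\big)}{|Q|^{m}}\longrightarrow 0\quad\text{as }l\to\infty,\ \text{uniformly over all cubes }Q.
\]

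This last weighted-mass estimate is where essentially all the work lies, and it is the step I expect to be the main obstacle, since it is exactly here that the freedom to pass to a subcone $\Gamma\subset\widetilde\Gamma$ must be exploited. Concretely, I would first shrink $\widetilde\Gamma$ to a thin cone $\Gamma$ about a single fixed direction, so that the normalized displacements $h_l^k/|h_l|$ lie in an arbitrarily small neighbourhood of fixed vectors; this is precisely the cone already needed in Theorem~\ref{GC2} to keep $(x-y_1,\dots,x-y_m)$ inside $\widetilde\Gamma$, so conditions (i) and (ii) of that theorem are then secured by a single choice of $\Gamma$. Then I would use the doubling property of each $\mu_k\in A_\infty$ (so that $\mu_k(Q-\ell h_l^k)$ is comparable to $\mu_k$ of the smallest cube containing $Q$ and $Q-\ell h_l^k$), together with the quantitative $A_\infty$ inequality recalled in Section~\ref{notation}, to control $\prod_k\mu_k(Q-\ell h_l^k)$ by $|Q|^m$ times a power of $|h_l|$; the delicate point -- and the reason the cone must be taken thin -- is to arrange the bookkeeping so that this power stays strictly below $\gamma$, uniformly in $Q$, whereupon the prefactor $|h_l|^{-\gamma}$ forces the whole quantity to $0$ uniformly over all cubes, which is the proposition.
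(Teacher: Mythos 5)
Your first three steps are sound and coincide with the paper's own computation: the geometric bounds $|y_i-y_i^{\ast}|\le\tfrac{\sqrt n}{2}\ell$ and $\sum_k|x-y_k|\gtrsim\ell|h_l|$ once $|h_l|\ge L_0(m,n)$, the resulting pointwise kernel bound $\lesssim\ell^{-mn}|h_l|^{-mn-\gamma}$, and the reduction of the proposition to
\[
\frac{1}{|h_l|^{\gamma}}\cdot\frac{\prod_{k=1}^m\mu_k\big(Q-\ell h_l^k\big)}{|Q|^{m}}\longrightarrow 0\quad\text{uniformly in }Q
\]
are all correct. The genuine gap is the final step: a bound of the form $\prod_k\mu_k(Q-\ell h_l^k)\lesssim|Q|^m|h_l|^{\beta}$ with $\beta<\gamma$, uniformly over all cubes $Q$, is false for general $\mu_k\in A_\infty$, and no choice of subcone $\Gamma$ can repair it. The cone constrains only the directions $h_l^k/|h_l|$, whereas the ratio $\mu_k(Q')/|Q'|$ is the average of $\mu_k$ over $Q'$ and is unbounded as $Q'$ ranges over all cubes for essentially any nonconstant $A_\infty$ weight (take $\mu_k(x)=|x|^{a}$ with $0<a<n$ and let $Q'$ be a unit cube centered at $R e_1$ with $R\to\infty$: the average is $\sim R^{a}$). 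Doubling only relates $\mu_k(Q-\ell h_l^k)$ to $\mu_k(Q)$ up to a factor $|h_l|^{C}$; it does not control $\mu_k(Q)/|Q|$ itself. So for each fixed $l$ the quantity you need to be small is unbounded over $Q$, and the claimed uniform convergence cannot be obtained along these lines.

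For comparison: the paper's proof hits exactly the same wall and resolves it tacitly by proving a differently normalized statement. What the displayed computation there actually establishes is
\[
\frac{|a_Q-a_{Q_i}|^{mn}}{\prod_{k=1}^m\mu_k(Q_k)}\Big\|\int_{\prod_k Q_k}|K(\cdot,\vec y)-K(\cdot,\dots,a_{Q_i},\dots)|\prod_k\mu_k(y_k)\,d\vec y\Big\|_{L^{\infty}(Q)}\lesssim\frac{|Q_i|^{\gamma/n}}{|a_Q-a_{Q_i}|^{\gamma}}\to 0,
\]
i.e.\ the weighted integral is normalized by $\prod_k\mu_k(Q_k)$ rather than by $|Q|^{m}=\ell^{mn}$; the passage from this to the stated form with the prefactor $|h_l|^{mn}$ is made there via the identity ``$|a_Q-a_{Q_i}|/\sqrt[mn]{\prod_k\mu_k(Q_k)}=|h_l^i|$'', which does not hold (the correct identity is $|a_Q-a_{Q_i}|/\sqrt[n]{|Q|}=|h_l^i|$). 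So the obstacle you isolated is real and lies in the statement itself, not in your method: with $\prod_k\mu_k(Q-\ell h_l^k)$ in place of $|Q|^{m}$ in the normalization your argument closes immediately (the weight masses cancel and you are left with $|h_l|^{-\gamma}\to 0$), but as literally stated the proposition is not established by either argument.
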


\begin{proof}
For any cubes $Q$ and $Q_{k}, k=1,\cdots,m$ such that $|Q|=|Q_k|$ and $|a_Q-a_{Q_k}|/\sqrt[n]{|Q|}$ sufficient large. If $x\in Q$, $y_k\in Q_k$, we have
\begin{equation*}
  |y_k-a_{Q_k}|\leqslant \frac{1}{2}|x-y_k|\leqslant \frac{1}{2}\max_{1\leqslant k\leqslant m}|x-y_k|,
\end{equation*}
which implies that
\begin{equation*}
  \begin{aligned}
    &|K(x,y_1,\cdots,y_m)-K(x,y_1,\cdots,y_{i-1},a_{Q_i},y_{i+1},\cdots,y_m)|\\
   & \lesssim
    \frac{|y_i-a_{Q_i}|^{\gamma}}{(\sum_{k=1}^m|x-y_k|)^{mn+\gamma}}
        \lesssim
    \frac{|Q_i|^{\gamma/n}}{|a_Q-a_{Q_i}|^{mn+\gamma}}.
  \end{aligned}
\end{equation*}
Thus, for any $x\in Q$,
\begin{equation*}
  \begin{aligned}
    &\frac{|a_{Q}-a_{Q_i}|^{mn}}{\prod_{k=1}^{m}\mu_{k}(Q_{k})}\int_{\prod\limits_{{k=1}}^m Q_k}|K(x,y_1,\cdots,y_m)
    -K(x,\cdots,y_{i-1},a_{Q_i},y_{i+1},\cdots,y_m)|\prod_{k=1}^{m}\mu_{k}(y_{k})d\vec{y}\\
    &\qquad\lesssim
    \frac{|a_{Q}-a_{Q_i}|^{mn}}{\prod_{k=1}^{m}\mu_{k}(Q_{k})}\cdot\frac{|Q_i|^{\gamma/n}}{|a_Q-a_{Q_i}|^{mn+\gamma}}\cdot \prod_{k=1}^{m}\mu_{k}(Q_{k})
   =
    \frac{|Q_i|^{\gamma/n}}{|a_Q-a_{Q_i}|^{\gamma}}\rightarrow 0,
  \end{aligned}
\end{equation*}
as $\frac {|a_Q-a_{Q_i}|}{\sqrt[n]{|Q|}}\to \infty$. Thus, we verify that
\begin{equation*}
  \begin{aligned}
    &\left\|\int_{\prod\limits_{{j=1}}^mQ_j}|K(\cdot,y_1,\cdots,y_m)-K(\cdot,y_1,\cdots,y_{i-1},a_{Q_i},y_{i+1},\cdots,y_m)|\prod_{k=1}^{m}\mu_{k}(y_{k})d\vec{y}\right\|_{L^{\infty}(Q)}
    \\
    &\qquad\times    \frac{|a_{Q}-a_{Q_i}|^{mn}}{\prod_{k=1}^{m}\mu_{k}(Q_{k})}\rightarrow 0
  \end{aligned}
\end{equation*}
uniformly for all cubes $Q$, $Q_k$ with $|Q|=|Q_k|,\, k=1,\cdots, m$,
  as $|a_Q-a_{Q_i}|/\sqrt[n]{|Q|}\rightarrow \infty$.

For every open cone $\widetilde{\Gamma}$, choose an open cone $\Gamma\subset \widetilde{\Gamma}$ such that for every
$h=(h^1,\cdots,h^m)\in \Gamma$, we have $|h|\sim |h^i|$, where $h^i\in \mathbb{R}^n$.
Let $Q_i=Q-\sqrt[n]{|Q|}h_l^i$ for every $i=1,\cdots,m$. Using $|a_Q-a_{Q_i}|/\sqrt[mn]{\prod_{k=1}^{m}\mu_{k}(Q_{k})}=|h_l^i|$
and the fact $|h_l|\sim |h_l^i|$, we deduce that
\begin{equation*}
    \begin{aligned}
    |h_l|^{mn}\times  &\Big\|\int_{\prod\limits_{{k=1}}^m(Q-\sqrt[n]{Q}h_l^k)}|K(\cdot,y_1,\cdots,y_m)\\
      &\qquad-K(\cdot,y_1,\cdots,y_{i-1},a_{Q}-\sqrt[n]{|Q|}h_l^i,y_{i+1},\cdots,y_m)|\prod_{k=1}^{m}\mu_{k}(y_{k})d\vec{y}\Big\|_{L^{\infty}(Q)} \rightarrow 0,
    \end{aligned}
\end{equation*}
as $l \rightarrow \infty$ uniformly for all cubes $Q\subset \mathbb{R}^n$. This completes the proof of Proposition \ref{Kernel}.
\end{proof}

\begin{proposition}\label{lem2}
Let $1< p_{1},\cdots,p_{m}<\infty$ with $\frac{1}{p}=\frac{1}{p_{1}}+\cdots+\frac{1}{p_{m}}$.
\begin{enumerate}
  \item If $p\leq \frac{1}{m-1}$, then for any cube $Q$ and $\vec{\omega}=(\omega_{1},\cdots,\omega_{m})\in A_{\vec{P}}$, $\nu_{\vec{\omega}}^{\alpha}\chi_{Q}\in \bigcap_{i=1}^{m}L^{p_{i}}(\omega_{i})$ if and only if $\frac{1}{1-mp}\leq \alpha\leq 1-m.$

 \item If $p> \frac{1}{m-1}$, for any $\alpha\in \mathbb{R}$, there exist $\vec{\omega}=(\omega_{1},\cdots,\omega_{m})\in A_{\vec{P}}$ and a cube $Q$ such that $\nu_{\vec{\omega}}^{\alpha}\chi_{Q}\notin \bigcap_{i=1}^{m}L^{p_{i}}(\omega_{i})$.
\end{enumerate}
\end{proposition}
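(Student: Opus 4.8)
The plan is as follows. Fix $i\in\{1,\dots,m\}$; since $\nu_{\vec\omega}$ is positive a.e., the condition $\nu_{\vec\omega}^{\alpha}\chi_Q\in L^{p_i}(\omega_i)$ is equivalent to $\int_Q\nu_{\vec\omega}^{\alpha p_i}\omega_i\,dx<\infty$, so the whole proposition reduces to deciding when these $m$ integrals are finite. I would obtain the positive half of (1) by proving it at the two endpoints $\alpha=\tfrac1{1-mp}$ and $\alpha=1-m$ and then interpolating, and the converse in (1) together with part (2) by exhibiting explicit power-weight counterexamples.

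For the endpoints, first note that $p\le\tfrac1{m-1}$ forces $\tfrac1{1-mp}\le 1-m$, so $[\tfrac1{1-mp},1-m]$ is a nonempty interval of negative reals. The left endpoint $\alpha=\tfrac1{1-mp}$ is precisely Lemma~\ref{weights-lem2}. For $\alpha=1-m$ I would pass to the auxiliary weights $\sigma_j:=\omega_j^{1-p_j'}$, which by Lemma~\ref{weights-lem1} satisfy $\sigma_j\in A_{mp_j'}$, and use the pointwise identity
$$\nu_{\vec\omega}^{(1-m)p_i}\,\omega_i=\sigma_i^{-(p_i-1)(1-(m-1)p)}\prod_{j\ne i}\sigma_j^{(m-1)p_ip/p_j'},$$
in which $\sigma_i$ appears to a nonpositive power and each $\sigma_j$, $j\ne i$, to a nonnegative power. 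Applying H\"older's inequality with the exponents $t_k$ given by $1/t_i=(p_i-1)(1-(m-1)p)(mp_i'-1)$ and $1/t_j=(m-1)p_ip/p_j'$ --- which sum to $1$ by a direct computation --- turns the $i$-th factor into $\int_Q\sigma_i^{-1/(mp_i'-1)}$ and the $j$-th factor into $\int_Q\sigma_j$; both are finite, since $\sigma_j\in A_{mp_j'}\subset L^1_{loc}$ and, by duality, $\sigma_i^{-1/(mp_i'-1)}=\sigma_i^{1-(mp_i')'}\in A_{(mp_i')'}\subset L^1_{loc}$. Finally, for interior $\alpha$ one writes $\alpha=t\tfrac1{1-mp}+(1-t)(1-m)$ with $t\in(0,1)$ and applies H\"older with exponents $1/t,1/(1-t)$ to $\nu_{\vec\omega}^{\alpha p_i}\omega_i=(\nu_{\vec\omega}^{p_i/(1-mp)}\omega_i)^{t}(\nu_{\vec\omega}^{(1-m)p_i}\omega_i)^{1-t}$, reducing to the two endpoint cases.

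For the counterexamples I would use radial power weights $\omega_j(x)=|x-x_Q|^{a_j}$ centered at the center $x_Q$ of $Q$. By Lemma~\ref{weights-lem1}, $\vec\omega\in A_{\vec P}$ exactly when $-n<a_j(1-p_j')<n(mp_j'-1)$ for all $j$ and $-n<A<n(mp-1)$, where $A:=\sum_j\tfrac{p}{p_j}a_j$; moreover $\nu_{\vec\omega}^{\alpha p_i}\omega_i\chi_Q\notin L^1$ precisely when $\alpha p_iA+a_i\le -n$. Taking all $p_j=mp$ (so $\sum 1/p_j=1/p$) and $a_j=a\uparrow n(mp-1)$ makes $A=a$ and forces $\alpha p_iA+a_i\le-n$ as soon as $\alpha<\tfrac1{1-mp}$; keeping $p_j=mp$ but letting $a_i\downarrow-n((m-1)mp+1)$ and $a_j\uparrow n(mp-1)$ for $j\ne i$ (each limit being one-sided, so the configuration stays admissible up to the limit, with $A\downarrow-n$) reduces, after a short computation, the inequality $\alpha p_iA+a_i\le-n$ to the condition $\alpha>1-m$. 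When $p\le\tfrac1{m-1}$ these two half-lines are exactly the complement of $[\tfrac1{1-mp},1-m]$, which gives the ``only if'' in (1); when $p>\tfrac1{m-1}$ one has $\tfrac1{1-mp}>1-m$, so the two half-lines cover all of $\mathbb{R}$, which is (2).

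The conceptual skeleton is short, so the real work will be the bookkeeping: verifying the summation identity for the H\"older exponents at $\alpha=1-m$, and carrying out the elementary but fiddly linear optimization over $(a_1,\dots,a_m)$ that pins the power-weight thresholds down to \emph{exactly} $\tfrac1{1-mp}$ and $1-m$ --- keeping in mind throughout that the admissible parameter region is open, so the extremal configurations are only reached in the limit.
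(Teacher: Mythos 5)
Your sufficiency half is correct and follows the same route as the paper: the endpoint $\alpha=\frac{1}{1-mp}$ is Lemma~\ref{weights-lem2}, the endpoint $\alpha=1-m$ is a H\"older estimate against the dual weights $\sigma_j=\omega_j^{1-p_j'}\in A_{mp_j'}$, and interior $\alpha$ follow by convexity. Your pointwise identity for $\nu_{\vec{\omega}}^{(1-m)p_i}\omega_i$ checks out, and your exponents $1/t_i=(p_i-1)(1-(m-1)p)(mp_i'-1)=((m-1)p_i+1)(1-(m-1)p)$ and $1/t_j=(m-1)p_ip/p_j'$ do sum to $1$ (in fact these are the correct exponents; the $s_{i,j}=\frac{(mp-1)p_j'}{p_ip}$ displayed in the paper's proof of this proposition appear to be carried over from Lemma~\ref{weights-lem2} and do not sum to $1$ together with $s_{i,i}$, e.g.\ for $m=2$, $p_1=p_2=4/3$ one gets $7/9+2/3\neq1$). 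The local integrability of $\sigma_j$ and of $\sigma_i^{-1/(mp_i'-1)}$ via $A_q$ duality is also fine.

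The gap is in the necessity half: the exponents $p_1,\dots,p_m$ are fixed in the hypothesis of the proposition, and you must produce $\vec{\omega}\in A_{\vec{P}}$ for that given $\vec{P}$; you are not free to ``take all $p_j=mp$.'' As written, your counterexamples only prove the statement when $p_1=\cdots=p_m$. The repair is to run your (correctly set up) feasibility analysis in the asymmetric region $-n((m-1)p_j+1)<a_j<n(p_j-1)$, $-n<A<n(mp-1)$ with $A=\sum_j(p/p_j)a_j$, which is essentially what the paper does with explicit $|x|^{a_j}$ weights. Concretely: for $\alpha<\frac{1}{1-mp}$ take $a_j=n(p_j-1)-\epsilon_j$ for every $j$, so $A\uparrow n(mp-1)$ and $\alpha p_iA+a_i\to n\big(\alpha p_i(mp-1)+p_i-1\big)<-n$; for $\alpha>1-m$ take $a_i=-n((m-1)p_i+1)+\epsilon_i$ and $a_j=n(p_j-1)-\epsilon_j$ for $j\neq i$, so $A\to-n$ and $\alpha p_iA+a_i\to-n\big(\alpha p_i+(m-1)p_i+1\big)<-n$. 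In the second configuration you must also control the rates so that $A$ stays above $-n$, namely $\epsilon_i/p_i>\sum_{j\neq i}\epsilon_j/p_j$: a symmetric choice of the $\epsilon$'s pushes $A$ below $-n$ once $m\geq3$, so this is not automatic. With these choices the thresholds come out at exactly $\frac{1}{1-mp}$ and $1-m$ for arbitrary admissible $\vec{P}$, and both parts of the proposition follow as you describe.
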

\begin{proof}
(1) Let $\frac{1}{m}< p\leq \frac{1}{m-1}$ and for any $i=1,\cdots,m,$
$$\quad s_{i,i}=\frac{p'_{i}}{(mp'_{i}-1)(p+1-mp)p_{i}}>0, s_{i,j}=\frac{(mp-1)p'_{j}}{p_{i}p}>0,\quad  j\neq i.$$
In fact, $s_{i,i}=\infty$ if $p=\frac{1}{m-1}$. Then $\frac{1}{s_{i,1}}+\cdots+\frac{1}{s_{i,m}}=1$, it follows that $s_{i,j}>1$ for any $j=1,\cdots,m$, and
$$\|\nu_{\vec{\omega}}^{1-m}\chi_{Q}\|_{L^{p_{i}}(\omega_{i})}\leq \bigg(\int_{Q}\omega_{i}(x)^{\frac{1-p'_{i}}{1-mp'_{i}}}dx\bigg)^{\frac{1}{p_{i}s_{i,i}}}\prod_{j\neq i}\bigg(\int_{Q}\omega_{j}^{1-p'_{j}}(x)dx\bigg)^{\frac{1}{p_{i}s_{i,j}}}.$$
Therefore,
\begin{align*}
\prod_{i=1}^{m}\|\nu_{\vec{\omega}}^{1-m}\chi_{Q}\|_{L^{p_{i}}(\omega_{i})}&\leq \prod_{j=1}^{m}\bigg(\int_{Q}\omega_{j}^{1-p'_{j}}(x)dx\bigg)^{\frac{(m-1)p}{p'_{j}(mp-1)}}\lesssim |Q|^{\frac{mp(m-1)}{mp-1}}\nu_{\vec{\omega}}(Q)^{\frac{m-1}{1-mp}}.
\end{align*}
Combining the result in Lemma \ref{weights-lem2}, we have $\nu_{\vec{\omega}}^{\alpha}\chi_{Q}\in \bigcap_{i=1}^{m}L^{p_{i}}(\omega_{i})$ when $\frac{1}{1-mp}\leq \alpha\leq 1-m.$

Next, we deal with the opposite case. Let
\begin{equation*}
   \omega_{1}(x):= \left\{
   \begin{array}{ll}\vspace{1ex}
|x|^{-n}, & \alpha\geq 0,\\
|x|^{-n(1-mp'_{1})(1-p_{1})+\frac{p_{1}}{mp}\epsilon}, &1-m<\alpha<0,\\
|x|^{n(p_{1}-1)-\frac{p_{1}}{mp}\epsilon},& \alpha<\frac{1}{1-mp}
   \end{array}
 \right.
\end{equation*}
with $\epsilon>0$ small enough. From the fact that $|x|^{\delta}\in A_{p}$ if and only if $-n<\delta<n(p-1)$, we have
\begin{equation*}
   \omega_{1}(x)^{1-p'_{1}}_{1}= \left\{
   \begin{array}{ll}\vspace{1ex}
|x|^{n(p'_{1}-1)}\in A_{mp'_{1}}, & \alpha\geq 0,\\
|x|^{n(mp'_{1}-1)-\frac{p'_{1}}{2mp}\epsilon}\in A_{mp'_{1}}, &1-m<\alpha<0,\\
|x|^{-n+\frac{p'_{1}}{mp}\epsilon}\in A_{mp'_{1}}, & \alpha<\frac{1}{1-mp}.
   \end{array}
 \right.
\end{equation*}
On the other hand, for $j=2,\cdots,m$, we consider
\begin{equation*}
   \omega_{j}(x):= \left\{
   \begin{array}{ll}\vspace{1ex}
1, & \alpha\geq 0,\\
|x|^{n(p_{j}-1)-\frac{p_{j}}{mp}\epsilon}, &1-m<\alpha<0 \quad \text{and} \quad \alpha<\frac{1}{1-mp}.
   \end{array}
 \right.
\end{equation*}
Then,
\begin{equation*}
   \omega_{j}(x)^{1-p'_{j}}= \left\{
   \begin{array}{ll}\vspace{1ex}
1 \in A_{mp'_{j}}, & \alpha\geq 0,\\
|x|^{-n+\frac{p'_{j}}{mp}\epsilon}\in A_{mp'_{j}}, &1-m<\alpha<0 \quad \text{and} \quad \alpha<\frac{1}{1-mp}.
   \end{array}
 \right.
\end{equation*}
By the definition of $\nu_{\vec{\omega}}=\prod_{i=1}^{m}\omega_{i}^{p/p_{i}}$, we arrive at
\begin{equation*}
  \nu_{\vec{\omega}}(x)= \left\{
   \begin{array}{ll}\vspace{1ex}
|x|^{-\frac{pn}{p_{1}}}\in A_{mp}, & \alpha\geq 0,\\
|x|^{-n+\frac{(2m-3)}{2m}\epsilon}\in A_{mp}, & 1-m<\alpha<0,\\
|x|^{n(mp-1)-\epsilon}\in A_{mp}, & \alpha<\frac{1}{1-mp},
   \end{array}
 \right.
\end{equation*}
which yields that $\vec{\omega}\in A_{\vec{P}}$ from Lemma \ref{weights-lem1}.

We now prove that for any cube $ Q=Q(o,r),$
$$\nu_{\vec{\omega}}^{\alpha}\chi_{Q}\notin L^{p_{1}}(\omega_{1}).$$
If $\alpha\geq 0$, by the direct computation, we get
$$\|\nu_{\vec{\omega}}^{\alpha}\chi_{Q}\|^{p_{1}}_{L^{p_{1}}(\omega_{1})}=\int_{Q}|x|^{-n(\alpha p+1)}dx=\infty.$$
If $1-m<\alpha<0$, we have
$$-n(1-mp'_{1})(1-p_{1})-n\alpha p_{1}<-n.$$
This gives us that
$$\|\nu_{\vec{\omega}}^{\alpha}\chi_{Q}\|^{p_{1}}_{L^{p_{1}}(\omega_{1})}=\int_{Q}|x|^{-n(1-mp'_{1})(1-p_{1})-n\alpha p_{1}+\big(\frac{(2m-3)\alpha p_{1}}{2m}-\frac{p'_{1}}{2mp}\big)\epsilon}dx=\infty,$$
when $\epsilon>0$ small enough.
If $\alpha<\frac{1}{1-mp},$ we get
$$\|\nu_{\vec{\omega}}^{\alpha}\chi_{Q}\|_{L^{p_{i}}(\omega_{i})}=\Big(\int_{Q}|x|^{-np_{i}\alpha(mp-1)+n(p_{i}-1)-\alpha p_{i}\epsilon-\frac{p_{i}\epsilon}{mp}}dx\Big)^{1/p_{i}}=\infty.$$
since $-np_{1}\alpha(mp-1)+n(p_{1}-1)<-n$.

Therefore, $\nu_{\vec{\omega}}^{\alpha}\chi_{Q}\in \bigcap_{i=1}^{m}L^{p_{i}}(\omega_{i})$ if and only if $\frac{1}{1-mp}\leq \alpha\leq 1-m.$

(2) The condition $p>\frac{1}{m-1}$ implies that $1-m<\frac{1}{1-mp}$, then, we obtain the desired result using the same arguments as above.
\end{proof}

\thanks{{\bf Acknowledgements.}}
The author would like to thank Professor Kangwei Li for the very valuable observations which have helped clarify and improve our presentation.

\end{document}